\tikzset{mymatr/.style={every outer matrix/.append style={draw=black, inner xsep=3pt , inner ysep=4pt, rounded corners, thick}}}
\def\input@path{ {../figures/} } \makeatother
\numberwithin{equation}{section}
\theoremstyle{plain}
\newtheorem{theorem}{Theorem}
\newtheorem{lemma}{Lemma}
\newtheorem{proposition}{Proposition}
\newtheorem{corollary}{Corollary}
\theoremstyle{definition}
\newtheorem{definition}{Definition}
\newtheorem{example}{Example}
\theoremstyle{remark}
\newtheorem*{remark}{Remark}
\newcommand{\RR}{\mathbb{R}}
\newcommand{\NN}{{\llbracket N \rrbracket}}
\renewcommand{\SS}{\mathbb{S}}
\newcommand{\FF}{\mathbb{F}}
\renewcommand{\subset}{\subseteq}
\DeclareMathOperator*{\rank}{rank}
\DeclareMathOperator*{\tr}{tr}
\DeclareMathOperator*{\diag}{diag}
\DeclareMathOperator{\sign}{sgn}
\let\vec\relax
\DeclareMathOperator{\vec}{vec}
\DeclareMathOperator{\vct}{vec}
\DeclareMathOperator{\polylog}{poly\hspace{.03em}log}
\begin{document}

\title[Rank-1 tensor completion with SDP]{Solving exact and noisy rank-one tensor completion with semidefinite programming}

\author{Diego Cifuentes}
\address{Georgia Institute of Technology \ Atlanta, GA, USA}
\email{diego.cifuentes@isye.gatech.edu}

\author{Zhuorui Li}
\address{Georgia Institute of Technology \ Atlanta, GA, USA}
\email{zli966@gatech.edu}

\maketitle

\begin{abstract}
Consider recovering a rank-one tensor of size \(n_1 \times \cdots \times n_d\) from exact or noisy observations of a few of its entries. We tackle this problem via semidefinite programming (SDP). We derive deterministic combinatorial conditions on the observation mask \(\Omega\) (the set of observed indices) under which our SDPs solve the exact completion and achieve robust recovery in the noisy regime. These conditions can be met with as few as \(\bigl(\sum_{i=1}^d n_i\bigr) - d + 1\) observations for special \( \Omega\).
When \(\Omega\) is uniformly random, our conditions hold with \(O\!\bigl(\sqrt{\prod_{i=1}^d n_i}\,\mathrm{polylog}(\prod_{i=1}^d n_i)\bigr)\) observations.
Prior works mostly focus on the uniformly random case, ignoring the practical relevance of structured masks. For \(d=2\) (matrix completion), our propagation condition holds if and only if the completion problem admits a unique solution. Our results apply to tensors of arbitrary order and cover both exact and noisy settings. In contrast to much of the literature, our guarantees rely solely on the combinatorial structure of the observation mask, without incoherence assumptions on the ground-truth tensor or uniform randomness of the samples.  Preliminary computational experiments show that our SDP methods solve tensor completion problems using significantly fewer observations than alternative methods.
\end{abstract}

\section{Introduction}
Tensors have emerged as fundamental tools in a wide range of applications in machine learning and signal processing \cite{morup2011applications,mossel2005learning,hsu2013learning,semerci2014tensor}.
Tensor completion problems are extensively investigated
to infer missing data from a limited number of observations 
\cite{li2017low,kreimer2013tensor,li2010tensor}.
While substantial progress has been made in the matrix completion case \cite{recht2010guaranteed,candes2012exact,candes2010power,candes2010matrix,keshavan2010matrix},
i.e., tensors of order two, extending these results to tensor completion of arbitrary order presents considerably greater challenges.

Rank-one tensor completion is a cornerstone for low-rank tensor completion because low-rank structure is built from rank-one components (see \cite{kolda2009tensor}). Robust recovery of a single rank-one factor provides sharp sample-complexity baselines, and well-conditioned subproblems that underlie greedy schemes that add components iteratively. Practically, accurate rank-one completion yields high-quality low-rank approximations.

Consider a real $d$-th order tensor $T \in \RR^{n_1 \times \dots \times n_d}$,
with entries $T_{i_1,\dots, i_d}$ indexed by tuples $(i_1,\dots,i_d)$
with $i_k \in [n_k] := \{1,\dots,n_k\}$.
The rank-one tensor completion problem consists in recovering the tensor given only (exact or noisy) entries corresponding to a  given observation mask $\Omega \subset [n_1] \times \dots \times [n_d]$.
The exact rank-one tensor completion problem can be decided in polynomial time by using linear algebra, as will be explained in \Cref{s:linearalgebra}.
The noisy rank-one tensor completion problem is more challenging.
Most past works assume that the observation mask is uniformly random and that the tensor values satisfy an incoherence property.
Under both assumptions, the noisy completion problem can be solved in polynomial time with $O\big(\max\big\{\sqrt{N} ,n\big\} \polylog n\big)$ random observations,
where
$N := n_1 n_2 \dots n_d$
and
$n := n_1 + \dots + n_d$,
see \cite{xia2021statistically,barak2016noisy,cai2019nonconvex,montanari2018spectral}.

For rank-one tensor completion, the minimal number of observations that guarantees uniqueness is \(n - d + 1\). This leads to a natural question: can stable recovery of a rank-one tensor be achieved efficiently with only \(n - d + 1\) observations?
In general, the answer is negative.
In this paper we identify deterministic combinatorial conditions on the observation mask \(\Omega\) under which stable recovery is indeed tractable.
Importantly, these deterministic structural conditions can be met with only \(n - d + 1\) observations.
When the observations are uniformly random,
our combinatorial conditions are met with 
\(O\!\left(\sqrt{N}\,\mathrm{polylog}\,N\right)\)
observations, matching existing results.

In this paper we tackle the exact and noisy rank-one tensor completion problems by means of convex optimization.
More precisely, we formulate tractable \emph{semidefinite programming} (SDP) relaxations for both exact and noisy settings.
Our SDPs involve a positive semidefinite (PSD) matrix of size
$(N + 1) \times(N + 1)$.
For exact completion, our SDP solves the problem when the observation mask \(\Omega\) satisfies the combinatorial conditions. In the noisy regime, our method is \emph{stable} under the same conditions, i.e., the reconstruction error tends to zero as the noise level vanishes.
To the best of our knowledge, our SDP is the first tractable method that guarantees stable recovery for general \(d\)-th order asymmetric rank-one tensors.

Our methods allow to tackle low-rank tensor completion problems using a greedy, iterative framework in which several rank-one components are recovered one after another. We provide preliminary experiments illustrating the performance of our low-rank tensor completion method on images of size $256 \times 256 \times 3$
(196000 entries).

In contrast to much of the prior literature, we neither impose incoherence assumptions on the unknown tensor nor assume that the observations are drawn uniformly at random. Instead, we solely rely on deterministic, combinatorial conditions on the observation mask. This is practically relevant since in many practical applications the observation mask is not uniformly random. As a representative example, in image or video inpainting the observation mask typically consists of axis-aligned slices. For image inpainting the tensor has size $n_1 \times n_2 \times 3$, and the mask consists of slices along the third axis. These slices are often spatially clustered rather than uniformly random. In this setting, our deterministic conditions specialize to the matrix case and are satisfied whenever the completion problem admits a unique solution. 

\subsection*{Contributions}
First, we characterize conditions on the observation mask \(\Omega\) under which the exact rank-one tensor completion problem admits a \emph{unique} solution, independently of the ground truth \(\hat T\). We then introduce four sufficient \emph{propagation} conditions on \(\Omega\), which satisfy the following relations:
\begin{equation}\label{eq:conditions}
\begin{tikzcd}[sep=small,
mymatr,every arrow/.append style = {-stealth, shorten > = 2pt, shorten <=2pt},
                    ]
\text{\footnotesize A-propaga.} \ar[r]&\text{\footnotesize SR-propaga.} \ar[r]&\text{\footnotesize S-propaga.} \ar[r]&  \text{\footnotesize GS-propaga.} \ar[r] & \text{\footnotesize Unique recovery}           
    \end{tikzcd}
\end{equation}

For \(d=2\) (matrix completion), all propagation conditions are equivalent to unique recovery. For general \(d\), the minimal number of observations required to satisfy any of the propagation conditions coincides with the minimal number that guarantees unique recovery. Moreover, although our criteria are fully deterministic, when \(\Omega\) is sampled uniformly at random all propagation conditions hold with high probability once the sample size is on the order of \(\sqrt{N}\log N\), matching the best-known results.

We propose a SDP relaxation, \eqref{eq:sdp2}, for exact tensor completion. We prove that \eqref{eq:sdp2} exactly recovers the tensor under the SR-propagation condition. In particular, when \(d=2\), our SDP succeeds for every observation mask \(\Omega\) that guarantees unique recovery. We further develop modifications of the SDP to accommodate the weaker S-propagation condition.
We introduce a second SDP relaxation, \eqref{eq:sdp-p}, tailored to noisy tensor completion. Under the SR-propagation assumption, \eqref{eq:sdp-p} reconstructs all entries of the unknown rank-one tensor with error bounded by \(O\!\bigl(\sqrt{\|\epsilon\|_\infty}\bigr)\). Finally, we present preliminary computational experiments to illustrate the performance of our SDP formulations. Using off-the-shelf solvers, we evaluate small-scale instances, and for medium-scale tensors we employ a custom SDP solver based on hierarchical low-rank decompositions. The experiments show that our methods allow to tackle rank-one completion problems using significantly less observations than previous methods. 
We also provide experiments on low-rank completion problems using a greedy, iterative framework.

\subsection*{Related works}

The tensor completion problem can be converted into matrix completion by unfolding the tensor.
Hence, it is possible to rely on techniques from matrix completion,
see, e.g., \cite{kolda2009tensor,gandy2011tensor,huang2015provable,liu2012tensor,tomioka2010estimation,goldfarb2014robust,romera2013new}.
However, ignoring the tensor structure often leads to a larger number of required observations,
e.g., $O(n^{2} \polylog n)$ observations instead of $O(n^{3/2} \polylog n)$ for third-order tensors of size $n\times n \times n$,
see \cite{yuan2016tensor,yuan2017incoherent}.

Several papers have studied methods for tensor completion.
Most papers assume that the observations are uniformly random and that the (unknown) completed tensor satisfies some type of incoherence property.
The papers \cite{yuan2016tensor, yuan2017incoherent} propose intractable methods that only need $O(n^{3/2}\polylog n)$ random observations for tensors of any order.
Tractable methods using $O(\sqrt{N}\polylog n)$ random observations are proposed on \cite{xia2019polynomial, liu2020tensor, barak2016noisy, potechin2017exact} for third-order tensors,
and on \cite{xia2021statistically} for arbitrary order tensors.
In contrast to all these works,
we do not assume that the samples are uniformly random and we do not make any assumption about the unknown completed tensor.
Instead, we only assume that the observed values are nonzero and we study combinatorial conditions on the observation mask that guarantee recovery.

We discuss \cite{barak2016noisy,potechin2017exact} in more detail since they also use SDP.
Both papers focus on third-order tensors, require $O(\sqrt{N} \polylog n)$ uniformly random samples, and make incoherence assumptions on the true tensor values.
In contrast, our methods apply to arbitrary order tensors and work under deterministic combinatorial conditions on the mask,
instead of incoherence assumptions on the true tensor.
Regarding Barak and Moitra \cite{barak2016noisy}, they focus exclusively on the noisy completion problem,
and propose an SDP with a PSD matrix of size ${3n {+} 3 \choose 3}\times {3n {+} 3 \choose 3}$, much larger than ours which requires significantly less scalar optimization variables. Moreover, the error bound established in \cite{barak2016noisy} does not vanish (is a positive constant) even the noise tends to zero. Next,  Potechin and Steurer \cite{potechin2017exact} focus exclusively on exact completion,
and provide results for the same as SDP from \cite{barak2016noisy} and also for a smaller SDP,
with a PSD matrix of size ${n(n{+}1)}\times {n(n{+}1)}$.
However, our experimental results show that their smaller SDP behaves quite poorly in practice.

The closest work to this paper is that of Cosse and Demanet \cite{cosse2021stable},
since they do not assume incoherence nor uniformly random observations.
However, most of their results focus on rank-one matrix completion ($d\!=\!2$).
For matrix completion, they consider an SDP with matrix size approximately $\tfrac{1}{2}(n_1 {+} n_2)^2 \times \tfrac{1}{2}(n_1 {+} n_2)^2$,
larger than ours.
They show that it solves the exact rank-one matrix completion problem whenever there is a unique solution.
Our relaxation \eqref{eq:sdp2} has the same guarantees.
They also consider a modified SDP for the noisy matrix completion problem,
and they provide guarantees similar to those of~\eqref{eq:sdp-p}.
An inconvenience of their SDP is that it requires knowledge of a bound on the noise magnitude, and the solution is sensitive to the given bound.
In contrast, our \eqref{eq:sdp-p} attempts to minimize the error with respect to the observed values,
so no knowledge about the noise is required.

Cosse and Demanet \cite{cosse2021stable} provide a few results for tensors of order $d\geq 3$.
They show that an SDP with matrix size $O(n^{d+1}) \times O(n^{d+1})$
solves exact tensor completion problem under the A-propagation condition.
In contrast, our SDP is significantly smaller,
with matrix size $(N {+} 1) \times (N {+} 1)$,
and our guarantees hold under the more general SR-propagation and S-propagation conditions.
No results for noisy tensor completion are provided in \cite{cosse2021stable}.

Two recent papers consider rank-one tensor completion. Zhou et al.\ \cite{zhou2025rank} reformulate the noiseless third-order problem as a rank-one matrix completion task, and Nie et al.\ \cite{nie2025robust} extend this line of work to noisy observations via a biquadratic model. Because the formulated problems are computationally challenging, both papers introduce semidefinite relaxations. Zhou’s guarantees apply to a hierarchy rather than a single SDP, and Nie’s tightness holds only under a restrictive separability assumption with limited practical relevance. Neither paper provides \(\Omega\)-based performance guarantees for their SDPs. In contrast, we address the general \(d\)th-order rank-one tensor completion problem and provide verifiable conditions on \(\Omega\) under which our SDP achieves exact recovery in the noiseless regime and stable recovery in the presence of noise.

\subsection*{Notation}

Throughout this paper we assume that $n_1, \dots , n_d$ are fixed positive integers.
Let $n := n_1 + \dots + n_d$ and $N := n_1 n_2 \dots n_d$ be their sum and product.
For integers $\ell,m$,
let $[m] := \{1,\dots, m\}$
and $[\ell, m] := \{\ell,\dots, m\}$.
We also let $\NN:= [n_1]\times\dots\times [n_d]$.
We use Greek letter to denote tuples of indices, e.g., $\alpha = (i_1,\dots,i_d) \in \NN$.
Let $\RR^n$ be the space of real vectors of length~$n$,
let $\RR^{n_1\times \dots \times n_d} = \RR^{\NN}$ be the space of real tensors of size $n_1\times \dots \times n_d$,
and let $\SS^k$ be the space of $k \times k$ real symmetric matrices.
We also let $\FF_2$ be the finite field with two elements.
Given $X \in \SS^k$, the notation $X \succeq 0$ means that $X$ is PSD.
Given a vector $w \in \RR^k$, we let $\diag(w) \in \SS^k$ be the diagonal matrix with $w$ in its diagonal.

\section{Rank-one tensor completion and unique recovery}
\label{s:linearalgebra}

In this section, we formalize the rank-one tensor completion problem and present a linear algebra characterization for determining whether a given observation mask admits a unique recovery.

A real $d$-th order tensor $T \in \RR^{n_1 \times \dots \times n_d}$  has \emph{rank one} if it is the outer product
$T = u^{(1)} \otimes \dots \otimes u^{(d)}$ of some vectors 
$u^{(1)} \in \RR^{n_1},\dots, u^{(d)}\in \RR^{n_d}$,
i.e.,
\begin{equation}\label{eq:tensor_representation}
  T_{i_1,\dots,i_d} = \prod_{k=1}^d u^{(k)}_{i_k},
  \qquad \forall (i_1,\dots,i_k) \in \NN.
\end{equation}
We highlight that there are two different notions of rank for tensors, Tucker and Canonical Polyadic (CP),
but both notions agree in the special case of rank one tensors.

The exact rank-one \emph{tensor completion} problem consists in determining a rank-one tensor~$T$, given the (exact) values of a subset of its entries.
More precisely,
given an observation mask $\Omega \subset \NN$ and  a vector of observed values $\hat{T} \in \RR^\Omega$, the problem is:
\begin{equation}\label{eq:exactcompletion}
\begin{aligned}
  \text{ find } \quad &T\in \RR^{n_1 \times \dots \times n_d}\\
  \quad\text{ s.t. }\quad
  &T_{i_1,\dots,i_d} = \hat{T}_{i_1,\dots,i_d} \;\forall (i_1,\dots,i_d) \in \Omega
  \quad\text{ and }\quad
  \rank T = 1
\end{aligned}
\end{equation}

We assume that all entries of $\hat{T}$ are nonzero, as otherwise it is possible to remove an slice of $T$ and obtain an equivalent completion problem of smaller dimension.

We also consider the \emph{noisy} setting,
where the observed values are corrupted by noise.
More precisely, given a noise vector $\epsilon \in \RR^\Omega$, let
\[
  T^\epsilon_{i_1,\dots,i_d} := \hat{T}_{i_1,\dots,i_d} + \epsilon_{i_1,\dots,i_d},
  \qquad \forall (i_1,\dots,i_k) \in \Omega.
\]
be the vector of corrupted observations.
For noisy rank-one tensor completion we are given $\Omega, T^\epsilon$, and the problem is:
\begin{equation}\label{eq:noisycompletion}
\begin{aligned}
  \text{ find } \quad &T\in \RR^{n_1 \times \dots \times n_d}\\
  \quad\text{ s.t. }\quad
  &T_{i_1,\dots,i_d} \approx T^\epsilon_{i_1,\dots,i_d} \;\forall (i_1,\dots,i_d) \in \Omega
  \quad\text{ and }\quad
  \rank T = 1
\end{aligned}
\end{equation}
where the approximation errors depend on the the noise level~$\epsilon$.

For exact completion \eqref{eq:exactcompletion}, we present our SDP in \Cref{section:sdp-e}; for noisy completion \eqref{eq:noisycompletion}, the corresponding SDP is discussed in \Cref{section:sdp-n}. Before introducing these formulations, we first analyze the unique recovery property of the observation mask \(\Omega\). In \Cref{section:propagation_conditions}, we then characterize structural conditions on \(\Omega\) that yield theoretical guarantees for our methods.

An observation mask $\Omega \subset \NN$ satisfies \emph{unique recovery} if for every $\hat T \in \RR^\Omega$ with nonzero entries the completion problem \eqref{eq:exactcompletion} is either infeasible or has a unique solution.

The unique recovery property can be identified in polynomial time by using linear algebra;
see~\cite{singh2020rank}.
For completeness, we explain here how to do this. The basic idea is to reduce the tensor completion problem to solving two systems of linear equations;
one system determines the absolute value of the entries
and the other one determines the signs.

Let us see that an observation mask $\Omega \subset \NN$ can be identified with a matrix ${V}_{\Omega}^{\FF_2}\in\FF_2^{|\Omega|\times n}$ with binary entries,
where $n := n_1 {+} \dots {+} n_d$.
An element $j \in [n_\ell]$ can be identified with the indicator vector $e^{(n_\ell)}_j \in \FF_2^{n_\ell}$,
with 1 in the $j$-th entry and 0's everywhere else.
Hence, a tuple $\alpha = (j_1,\dots,j_d)\in \NN$
can be identified with the vector
\begin{equation}\label{ind_vec}
    v_\alpha = v_{j_1,\dots ,j_d} :=
    \begin{pmatrix}
    e^{(n_1)}_{j_1} \\ \vdots \\ e^{(n_d)}_{j_d}
    \end{pmatrix}
    \in \FF_2^{n},
\end{equation}
which concatenates the~$e^{(n_\ell)}_{j_\ell}$'s.
Hence, $\Omega$ can be identified with the matrix
\begin{align}\label{ind_mat}
V_{\Omega}^{\FF_2} =
\begin{pmatrix}
  \vdots\\
  \text{\textemdash}\;\; v_{\alpha}^T\; \text{\textemdash} \\
  \vdots
\end{pmatrix}
\in\FF_2^{|\Omega|\times n}
\end{align}
with rows $v_\alpha \in \FF_2^n$ for all~$\alpha \in \Omega$.

We denote ${V}^{\RR}_{\Omega}\in\RR^{|\Omega|\times n}$ the matrix with the same entries as ${V}_{\Omega}^{\FF_2}$, but viewed as real numbers.
We are ready to introduce the two systems of linear equations.
Given the mask $\Omega$ and observations $\hat T \in \RR^\Omega$,
our goal is to find vectors $u^{(1)}\in \RR^{n_1}, \dots, u^{(d)}\in \RR^{n_d}$ such that
\begin{equation}\label{eq:tensor_representation2}
  \prod_{k=1}^d u^{(k)}_{i_k} = \hat{T}_{i_1,\dots,i_d},
  \qquad \forall (i_1,\dots,i_d) \in \Omega.
\end{equation}
Taking the log of absolute values on both of sides, we get
\begin{equation*}
    \sum_{k=1}^d \log(|u^{(k)}_{i_k}|) = \log(|\hat T_{i_1,\dots,i_d}|),
    \qquad\forall (i_1,\dots,i_d)\in \Omega.
\end{equation*}
The above is a system of linear equations in variables $y_{k, i_k} = \log(|u^{(k)}_{i_k}|)$,
which might be written as
\begin{equation}\label{system:abs}
  V^\RR_{\Omega} \; y \;=\; \log(|\hat T|)
\end{equation}

The solution of~\eqref{system:abs} gives the absolute values of the entries of the $u^{(k)}$'s.
It remains to explain how to recover the signs.
Consider the function
\begin{equation*}
  \sign : \RR \setminus \{0\} \to \FF_2,
  \qquad
  t \mapsto \begin{cases}
        0, \quad\text{if}~t > 0\\
        1, \quad\text{if}~t < 0
    \end{cases}
\end{equation*}
Applying this function in both sides of \eqref{eq:tensor_representation2} leads to
\begin{equation*}
    \sum_{k=1}^d \sign(u^{(k)}_{i_k}) \underset{(\FF_2)}{=} \sign(\hat T_{i_1,\dots,i_d}),
    \qquad\forall (i_1,\dots,i_d)\in \Omega.
\end{equation*}
The above is a linear system over $\FF_2$ in variables $z_{k, i_k} = \sign(u^{(k)}_{i_k})$,
which might be written in the form
\begin{equation}\label{system:signs}
  V^{\FF_2}_{\Omega} \; z \;=\; \sign(\hat T)
\end{equation}

The solution of \eqref{system:signs} determines the signs of the entries of the $u^{(k)}$'s.

Given $\Omega, \hat T$, the completion problem has a unique solution if and only if the systems of equations in \eqref{system:abs} and \eqref{system:signs} have unique solutions, up to trivial equivalences.
More precisely, we have to account for the fact that we may scale the different vectors $u^{(k)}$'s without changing the underlying tensor.
This leads to the following result.

\begin{proposition}\label{prop:rank_uniqueness}
    A set $\Omega \subset \NN$ satisfies unique recovery if and only if
    the matrix $V_{\Omega}^{\FF_2}$ from \eqref{ind_mat} has rank $n-d + 1$ (over $\FF_2$).
\end{proposition}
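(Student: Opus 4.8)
The plan is to translate unique recovery into rank conditions on the two linear systems \eqref{system:abs} and \eqref{system:signs}, and then to collapse those two conditions into the single stated condition on $V^{\FF_2}_\Omega$. First I would note that a rank-one tensor $T$ is determined by its magnitude pattern together with its sign pattern, which in the notation above are $V^\RR_\NN\,y$ and $V^{\FF_2}_\NN\,z$ (using the full grid $\NN$ in place of $\Omega$). Hence two feasible factorizations yield the same tensor precisely when their $y$'s agree modulo $\ker V^\RR_\NN$ and their $z$'s agree modulo $\ker V^{\FF_2}_\NN$. I would then compute these kernels explicitly: writing $\ones_k \in \{0,1\}^n$ for the indicator of the $k$-th block $[n_k]$, each row $v_\alpha$ of $V_\NN$ satisfies $v_\alpha^T \ones_k = 1$, so $\ones_k - \ones_\ell$ (over $\RR$) and $\ones_k + \ones_\ell$ (over $\FF_2$) lie in the kernel. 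A short argument, varying one coordinate of $\alpha$ at a time, shows these are the only kernel elements, so both $\ker V^\RR_\NN$ and $\ker V^{\FF_2}_\NN$ equal the $(d-1)$-dimensional space of block-constant vectors with zero block-sum. This is exactly the scaling freedom $u^{(k)} \mapsto c_k u^{(k)}$ with $\prod_k c_k = 1$.

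Next I would characterize unique recovery in terms of $V_\Omega$. Since the same computation shows the scaling space is always contained in $\ker V^\RR_\Omega$ and in $\ker V^{\FF_2}_\Omega$, unique recovery holds iff neither kernel contains any extra vector, i.e. iff both kernels coincide with the scaling space. For the forward implication I would take the all-ones observations ($\hat T \equiv 1$, so $y=0$ and $z=0$ solve the systems): any extra kernel vector $w$ produces a second factorization with identical observed entries but $V_\NN\,w \neq 0$, hence a genuinely different rank-one completion, contradicting uniqueness. The reverse implication is immediate. By rank--nullity, this characterization is equivalent to the pair of conditions $\rank V^\RR_\Omega = n-d+1$ and $\rank V^{\FF_2}_\Omega = n-d+1$.

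The final and key step is to show that the single $\FF_2$-condition already forces the real-rank condition. For any $0/1$ matrix one has $\rank_{\FF_2} \le \rank_\RR$: an $r \times r$ submatrix invertible over $\FF_2$ has odd integer determinant, hence nonzero determinant over $\RR$, so $\rank_\RR \ge r$. Combined with the a priori bound $\rank_{\FF_2} V^{\FF_2}_\Omega \le \rank_\RR V^\RR_\Omega \le n-d+1$, where the upper bound comes from the always-present $(d-1)$-dimensional scaling kernel, the hypothesis $\rank V^{\FF_2}_\Omega = n-d+1$ pins both ranks to $n-d+1$.

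I expect this collapsing step to be the only non-routine point: a priori, magnitude recovery (a real system) and sign recovery (an $\FF_2$ system) look like independent requirements, and it is the elementary but crucial inequality $\rank_{\FF_2} \le \rank_\RR$ for $0/1$ matrices that makes the $\FF_2$ condition the binding one. Once this is in place, the equivalence with unique recovery follows directly from the kernel characterization above.
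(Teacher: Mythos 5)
Your proposal is correct and follows essentially the same route as the paper: reduce unique recovery to the requirement that both $\rank V^{\RR}_\Omega$ and $\rank V^{\FF_2}_\Omega$ equal $n-d+1$ (the number of degrees of freedom modulo the $(d-1)$-dimensional scaling kernel), and then use $\rank(V^{\RR}_\Omega)\geq\rank(V^{\FF_2}_\Omega)$ together with the a priori upper bound $n-d+1$ to make the $\FF_2$ condition the binding one. You simply supply more detail than the paper's terse proof (the explicit kernel computation, the all-ones counterexample for necessity, and the odd-determinant argument for the rank inequality), all of which checks out.
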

\begin{proof}
  From the above discussion we have that unique recovery holds if and only if the ranks of the matrices $V^{\FF_2}_{\Omega}$ and $V^{\RR}_{\Omega}$ match the number of degrees of freedom in the variables $u^{(1)},\dots,u^{(d)}$,
  modulo rescaling.
  Since we may assume that the first entry of each of the vectors $u^{(2)}, \dots, u^{(d)}$ is a 1,
  the number of degrees of freedom is $n-d+1$.
  The proposition now follows from the fact that
  $\rank(V^{\RR}_{\Omega}) \geq \rank(V^{\FF_2}_{\Omega})$.
\end{proof}

\begin{example}\label{example_linear_system}
  Let $\NN = [2]\times [2]\times[2]$ and $\Omega=\{(1,1,1), (1,2,2), (2,1,2), (2,2,2)\}$.
  The matrix associated to $\Omega$ is
  \[V_{\Omega}^{\FF_2}=
    \begin{bmatrix}
      1 &0 &1 &0 &1 &0\\
      1 &0 &0 &1 &0 &1\\
      0 &1 &1 &0 &0 &1\\
      0 &1 &0 &1 &0 &1\\
    \end{bmatrix}.
  \]
  Since $ \rank(V^{\FF_2}_{\Omega}) = 4 = n - d + 1$,
  then $\Omega$ satisfies unique recovery.
  In particular, if the observed values are 
  \[
    \hat{T}_{111} = 1,~~ \hat{T}_{122} = -4,~~  \hat{T}_{212} = -4,~~ \hat{T}_{222} = -8,
  \]
  then by solving the systems of equations \eqref{system:abs} and \eqref{system:signs}
  we can recover the remaining entries:
  \[
    T_{121} = 2,~~ T_{211} = 2,~~  T_{221} = 4,~~ T_{112} = -2.
  \]
\end{example}
Note that if the observed values $\hat T$ are corrupted by noise then the linear system in \eqref{system:abs} will generally have no solution.
Thus, the recovery method from above does not apply in noisy cases.

\section{Sufficient conditions for unique tensor recovery}\label{section:propagation_conditions}

In this section we study sufficient conditions under which the exact rank-one tensor completion problem \eqref{eq:exactcompletion} has a unique solution.
We propose four deterministic combinatorial conditions:
A-propagation,
SR-propagation,
S-propagation,
and GS-propagation.
The conditions satisfy the relations in \eqref{eq:conditions},
and they are used to analyze the guarantees of our SDPs.

As an illustration, \Cref{tab:conditions} reports the fraction of observation masks \(\Omega\) that satisfy each condition for tensors of sizes $[3]{\times}[3]{\times}[2]$ and $[2]{\times}[2]{\times}[2]{\times}[2]$.

Notably, the fraction of instances satisfying A-propagation declines sharply when we restrict to masks with the minimal number of required observations.

\begin{table}[htb]
  \centering
  \caption{Percentage of masks $\Omega$ satisfying each condition.}
  \label{tab:conditions}
  \begin{tabular}{ccccc}
    \toprule
    & \multicolumn{2}{c}{$[3]{\times}[3]{\times}[2]$}
    & \multicolumn{2}{c}{$[2]{\times}[2]{\times}[2]{\times}[2]$} \\
    & $|\Omega|\!=\!6$ & $|\Omega|\!\leq\!18$
    & $|\Omega|\!=5$ & $|\Omega|\!\leq\!16$\\
    \midrule
    Unique Recovery& 48.35\% & 86.76\% & 61.54\% &91.90\% \\
    GS-propagation & 48.35\% & 86.76\% & 61.54\% &91.90\% \\
    S-propagation  & 48.29\% & 86.76\% & 61.17\% &91.88\% \\
    SR-propagation & 47.90\% & 86.73\% & 61.17\% &91.88\% \\
    A-propagation  & \phantom{0}9.31\%  & 72.51\% & \phantom{0}9.16\% &70.60\% \\
    \bottomrule
  \end{tabular}
\end{table}

This section is organized into three parts.
First, we describe each of the four sufficient conditions.
Then we show the relationships among those conditions. 
Finally, we characterize how many observations are needed to satisfy these conditions when $\Omega$ is uniformly random.

\subsection{Propagation conditions}

We proceed to introduce the four propagation conditions from \eqref{eq:conditions}.

\subsubsection{Generalized Square (GS) propagation}

A \emph{generalized square} is a set $\{\alpha_1, \alpha_2,\alpha_3,\alpha_4\}$ of four tuples $\alpha_i \in \NN$ such that
\begin{equation}\label{generalized_square}
  v_{\alpha_1} + v_{\alpha_2} + v_{\alpha_3} + v_{\alpha_4} \underset{(\FF_2)}{=} 0,
\end{equation}
where $v_{\alpha_i}$ is as in~\eqref{ind_vec}.
An illustration of a generalized square is shown on the right of \Cref{fig:square}.

\begin{figure}[htb]
    \centering
    \includegraphics[width=0.3\textwidth]{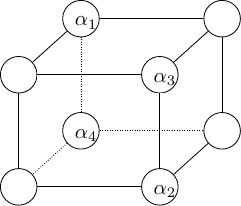} \qquad\qquad
    \includegraphics[width=0.3\textwidth]{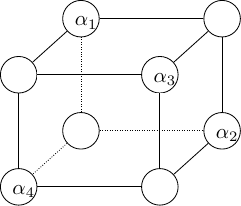}
    \caption{A square (left) and a generalized square (right).}
    \label{fig:square}
\end{figure}

The GS-propagated set $\mathcal{P}_{GS}(\Omega)$ of $\Omega$ is defined recursively as follows:
\begin{itemize}
  \item $\Omega \subset \mathcal{P}_{GS}(\Omega)$.
  \item If $\{\alpha_{1}, \alpha_{2}, \alpha_{3}, \beta \}$ is a GS and each $\alpha_i \in \mathcal{P}_{GS}(\Omega)$, then $\beta \in \mathcal{P}_{GS}(\Omega)$.
\end{itemize}
The set $\Omega$ satisfies \emph{GS-propagation} if $\mathcal{P}_{GS}(\Omega) = \NN$.

\begin{example}[GS holds]\label{eg:GSholds}
    Let $\NN = [3] \times [3] \times [2]$ and 
    \[
    \Omega = \{(1,1,1),(1,2,2),(2,1,2),(3,2,1),(2,3,1),(3,3,2)\}.
    \]
    The GS-propagation holds in this case.
    As an illustration, we verify that (2,2,1), (2,2,2) and (3,2,2) belong to $\mathcal{P}_{GS}(\Omega)$:
    \begin{align*}
        \{(1,1,1), (2,1,2), (1,2,2), (2,2,1)\}
        \text{ is a GS }
        \quad\implies\quad
        (2,2,1) \in \mathcal{P}_{GS}(\Omega),
        \\
        \{(3,2,1), (2,3,1), (3,3,2), (2,2,2)\}
        \text{ is a GS }
        \quad\implies\quad
        (2,2,2) \in \mathcal{P}_{GS}(\Omega),\\
        \{(3,2,1), (2,2,1), (2,2,2), (3,2,2)\}
        \text{ is a GS }
        \quad\implies\quad
        (3,2,2) \in \mathcal{P}_{GS}(\Omega).
    \end{align*}
\end{example}

The next example illustrates that GS-propagation is not necessary for unique recovery.

\begin{example}[GS fails]\label{eg:GSfails}
  Let $\NN=[6]\times [6] \times [6]$ and
  \begin{align*}
    \Omega =& \{(1,1,1), (2,2,1), (3,3,1),(4,1,2),(3,4,2),(2,5,2),(5,2,3),(6,3,3),\\
    &(1,4,3),(4,2,4),(1,5,4),(3,6,4),(2,6,5),(5,1,5),(4,3,6),(5,5,6)\},
  \end{align*}
  It can be checked that unique recovery holds by \Cref{prop:rank_uniqueness}.
  On the other hand, GS-propagation condition fails since we have that $\mathcal{P}_{GS}(\Omega) = \Omega$. 
\end{example}

\subsubsection{Square (S) propagation}
A \emph{square} consists of four distinct tuples $\alpha_1, \alpha_2 , \alpha_3,\alpha_4 \!\in\! \NN$ such that
\begin{equation}\label{general_square}
  \begin{aligned}
    \qquad\max\{(\alpha_1)_k,(\alpha_2)_k\} &= \max\{(\alpha_3)_k,(\alpha_4)_k\}, &\quad \forall~~k\in[d]\qquad\qquad\\
    \qquad\min\{(\alpha_1)_k,(\alpha_2)_k\} &= \min\{(\alpha_3)_k,(\alpha_4)_k\},&\quad \forall~~k\in[d]\qquad\qquad
  \end{aligned}
\end{equation}
The pairs $\alpha_1, \alpha_2$ and $\alpha_2,\alpha_4$ have \emph{opposite} elements,
while the other four pairs have \emph{adjacent} elements.
We denote a square as $\{\alpha_1, \alpha_2 ; \alpha_3, \alpha_4\}$

An illustration of a square is shown on the left of \Cref{fig:square}.
For instance,
for $\NN = [2] \times [2] \times [2]$
there are 12 squares:
\begin{equation}\label{eq:squares12}
\begin{aligned}
  \{111,222;112,221\}, \quad
  \{111,222;211,122\}, \quad
  \{111,222;121,212\},
  \\
  \{112,221;211,122\}, \quad
  \{112,221;121,212\}, \quad
  \{211,122;121,212\},
  \\
  \{111,122;112,121\}, \quad
  \{111,212;112,211\}, \quad
  \{111,221;121,211\},
  \\
  \{211,222;212,221\}, \quad
  \{121,222;122,221\}, \quad
  \{112,222;122,212\}.
\end{aligned}
\end{equation}

The S-propagated set $\mathcal{P}_{S}(\Omega)$ of $\Omega$ is defined recursively as follows:
\begin{itemize}
  \item $\Omega \subset \mathcal{P}_{S}(\Omega)$.
  \item If $\{\alpha_{1}, \alpha_{2}; \alpha_{3}, \beta \}$ is a square and each $\alpha_i \in \mathcal{P}_{S}(\Omega)$, then $\beta \in \mathcal{P}_{S}(\Omega)$.
\end{itemize}
The set $\Omega$ satisfies
S-propagation if $\mathcal{P}_{S}(\Omega) = \NN$.

\begin{example}[S holds]\label{eg:Sholds}
Let $\NN = [3]\times[4]\times[3]$ with observation set 
\[\Omega=\{(1,1,1), (1,2,2), (2,1,2), (2,2,2), (3,3,2), (3,1,3),(3,3,3), (2,4,3)\}.\]
The S-propagation condition holds in this case.
As an illustration, we verify that (3,4,1) belongs to $\mathcal{P}_{S}(\Omega)$:
\begin{align*}
    \{(1,1,1), (2,1,2), (2,2,2), (1,2,1)\}
    \text{ is a square }
    \quad\implies\quad
    (1,2,1) \in \mathcal{P}_{S}(\Omega),
    \\
    \{(3,3,2), (3,3,3), (3,1,3), (3,1,2) \}
    \text{ is a square }
    \quad\implies\quad
    (3,1,2) \in \mathcal{P}_{S}(\Omega),\\
    \{(2,1,2), (2,2,2), (3,1,2), (3,2,2) \}
    \text{ is a square }
    \quad\implies\quad
    (3,2,2) \in \mathcal{P}_{S}(\Omega),\\
    \{(1,2,2), (1,2,1), (3,2,2), (3,2,1) \}
    \text{ is a square }
    \quad\implies\quad
    (3,2,1) \in \mathcal{P}_{S}(\Omega),\\
    \{(2,4,3), (3,3,3), (3,3,2), (2,4,2) \}
    \text{ is a square }
    \quad\implies\quad
    (2,4,2) \in \mathcal{P}_{S}(\Omega),\\
    \{(2,2,2), (2,4,2), (3,2,1), (3,4,1) \}
    \text{ is a square }
    \quad\implies\quad
    (3,4,1) \in \mathcal{P}_{S}(\Omega).
\end{align*}
\end{example}

\begin{example}[S fails]\label{eg:Sfails}
Consider the same $\Omega$ as in \Cref{eg:GSholds}.
S-propagation is not satisfied since $\mathcal{P}_{S}(\Omega) = \Omega$.
\end{example}

\subsubsection{Square Restricted (SR) propagation}

The SR-propagated set $\mathcal{P}_{SR}(\Omega)$ of a mask $\Omega$ is defined recursively as follows:
\begin{itemize}
  \item $\Omega \subset \mathcal{P}_{SR}(\Omega)$.
  \item If $\{\alpha_{1}, \beta; \alpha_{2}, \gamma \}$ is a square,
    $\alpha_1, \alpha_2 \!\in\! \Omega$,
    and $\beta \!\in\! \mathcal{P}_{SR}(\Omega)$,
    then $\gamma \!\in\! \mathcal{P}_{SR}(\Omega)$.
\end{itemize}
The set $\Omega$ satisfies
SR-propagation if $\mathcal{P}_{SR}(\Omega) = \NN$.

\begin{example}[SR holds]\label{eg:SRholds}
Let $\Omega$ be the same as in Example~\ref{example_linear_system}.
We verify that $\gamma \in \mathcal{P}_{SR}(\Omega)$ for each $\gamma \in \NN\setminus \Omega$:
\begin{align*}
    \{(1,2,2), (2,1,2); (2,2,2), (1,1,2)\}
        \text{ is a square }
        \quad\implies\quad
        (1,1,2) \in \mathcal{P}_{SR}(\Omega),
        \\
        \{(1,1,1), (2,2,2); (1,2,2), (2,1,1) \}
        \text{ is a square }
        \quad\implies\quad
        (2,1,1) \in \mathcal{P}_{SR}(\Omega),
        \\
        \{(1,1,1), (2,2,2); (2,1,2), (1,2,1) \}
        \text{ is a square }
        \quad\implies\quad
        (1,2,1) \in \mathcal{P}_{SR}(\Omega),
        \\
        \{(2,2,2), (1,2,1); (1,2,2), (2,2,1) \}
        \text{ is a square }
        \quad\implies\quad
        (2,2,1) \in \mathcal{P}_{SR}(\Omega).
\end{align*}
Hence, the SR-propagation condition holds.
\end{example}

\begin{example}[SR fails]\label{eg:SRfails}
    In the instance from \Cref{eg:Sholds},
    it can be checked that $\mathcal{P}_{SR}(\Omega) = \NN\setminus\{(3,4,1)\}$. 
    Hence, SR-propagation is not satisfied.
\end{example}

\subsubsection{Axes (A) propagation}

A sequence $\alpha_1,\alpha_2,\dots,\alpha_p \in \NN$ is \emph{strongly connected}
\footnote{
  The name is motivated by the fact that it corresponds to a strong notion of connectivity for the hypergraph with vertex set $\bigcup_k [n_k]$ and hyperedges $\{\alpha_k\}_k$.
}
if for each $2\leq k\leq p$ we can find an $1\leq i \leq k-1$
such $\alpha_k, \alpha_i$ have $d-1$ indices in common.
A set $\Omega \subset \NN$ satisfies the \emph{A-propagation} condition if we can find a strongly connected sequence
\(\alpha_1,\alpha_2,\dots,\alpha_p \in \Omega,\)
such that $\{\alpha_i\}_{i\in [p]}$ covers $\NN$ coordinate-wise, i.e., 
\[
  \bigcup_{k=1}^{p}(\alpha_k)_i = [n_i],\quad\text{for}~i=1,2\dots,d
\]
where $(\alpha_k)_i$ is the $i$-th coordinate of $\alpha_k$.

\begin{remark}
Cosse and Demanet show in \cite[Cor.~2]{cosse2021stable} that the A-propagation condition allows the exact rank-one tensor approximation problem to be solved by a certain SDP relaxation (one larger than ours).
We point out that statement of \cite[Cor.~2]{cosse2021stable} mentions a weaker condition,
but the proof in fact uses A-propagation
\footnote{A-propagation is used in equations (5.2), (5.10), (5.11), (5.12), (5.13) for the proof of \cite[Cor.~2]{cosse2021stable}.
For instance, in (5.2), they construct a sequence of polynomial equations whose indices adhere to A-propagation.
}.
\end{remark}

This is an example where axes propagation condition holds.

\begin{example}[A holds]\label{eg:Aholds}
  Let $\NN = [2] \times [2] \times [2]$ and \[
  \Omega=\{(1,1,1),(1,2,1),(1,2,2),(2,2,2)\}.\]
    The axes propagation condition is verified using the sequence
    \[
    (1,1,1)\rightarrow(1,2,1)\rightarrow(1,2,2)\rightarrow(2,2,2).
    \]
\end{example}

\begin{example}[A fails]\label{eg:Afails}
Consider the instance from \Cref{eg:SRholds},
which satisfies SR-propagation.
The axes propagation condition fails.
The reason is that for $(1,1,1)\in \Omega$, all of the other elements in $\Omega$ have at least 2 different indices with $(1,1,1)$ and then the third coordinate cannot be fully covered.
\end{example}

\subsection{Relationships among propagation conditions}

We are ready to prove the chain of implications in~\eqref{eq:conditions}.

\begin{theorem}\label{thm:condition_equivalence}
  The propagation conditions satisfy~\eqref{eq:conditions}.
  In addition, the implications are all strict for $d\geq 3$.
\end{theorem}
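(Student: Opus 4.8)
The plan is to prove the four implications in \eqref{eq:conditions} one at a time, reading the chain from right to left. Three of them—GS$\Rightarrow$Unique recovery, S$\Rightarrow$GS, and SR$\Rightarrow$S—reduce to comparing the recursively defined propagated sets and should be short. The only substantial step is A$\Rightarrow$SR, which requires converting the ``strong connectivity'' of the witnessing sequence into a chain of square-completions; I expect this to be the main obstacle. Strictness for $d\ge 3$ will follow by invoking the separating masks already recorded in the examples.

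For GS$\Rightarrow$Unique recovery, the key observation is that a generalized square $\{\alpha_1,\alpha_2,\alpha_3,\alpha_4\}$ is precisely the relation $v_{\alpha_4}=v_{\alpha_1}+v_{\alpha_2}+v_{\alpha_3}$ in $\FF_2^n$. An easy induction on the recursive definition then shows $v_\beta\in\mathrm{span}_{\FF_2}\{v_\alpha:\alpha\in\Omega\}$ for every $\beta\in\mathcal{P}_{GS}(\Omega)$, so if GS-propagation holds this span contains $\{v_\alpha:\alpha\in\NN\}$. A direct computation finishes the job: fixing the base point $v_{(1,\dots,1)}$, the differences $v_\alpha+v_{(1,\dots,1)}$ span the subspace of $\FF_2^n$ on which every block sums to $0$, of dimension $\sum_{k}(n_k-1)=n-d$, while $v_{(1,\dots,1)}$, whose blocks each sum to $1$, lies outside that subspace and contributes one further dimension. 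Hence $\dim\mathrm{span}_{\FF_2}\{v_\alpha:\alpha\in\NN\}=n-d+1$, so $\rank(V_\Omega^{\FF_2})=n-d+1$, and \Cref{prop:rank_uniqueness} yields unique recovery.

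Next I would establish $\mathcal{P}_{SR}(\Omega)\subseteq\mathcal{P}_S(\Omega)\subseteq\mathcal{P}_{GS}(\Omega)$, which gives SR$\Rightarrow$S and S$\Rightarrow$GS. The bridge is the identity $v_{\alpha_1}+v_{\alpha_2}=v_{\alpha_3}+v_{\alpha_4}$ for any square $\{\alpha_1,\alpha_2;\alpha_3,\alpha_4\}$: in each coordinate the two opposite pairs carry the same multiset of indices, so the indicator blocks coincide. Consequently every square is a generalized square, and the S-step $\{\alpha_1,\alpha_2;\alpha_3,\beta\}$ is a legal GS-step, giving the second inclusion by induction. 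For the first inclusion, an SR-step uses a square $\{\alpha_1,\beta;\alpha_2,\gamma\}$ with $\alpha_1,\alpha_2\in\Omega$ and $\beta\in\mathcal{P}_{SR}(\Omega)$; reading this same square (with its opposite pairs unchanged) as an S-step whose three known entries $\alpha_1,\beta,\alpha_2$ already lie in $\mathcal{P}_S(\Omega)$ by induction produces $\gamma\in\mathcal{P}_S(\Omega)$.

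The crux is A$\Rightarrow$SR, and the main tool is a \emph{flip lemma}: if $P,P'\in\Omega$ differ in exactly one coordinate $i$, with $P_i=a$ and $P'_i=b$, then for any $\beta\in\mathcal{P}_{SR}(\Omega)$ with $\beta_i=b$, the tuple $\gamma$ obtained from $\beta$ by resetting coordinate $i$ to $a$ lies in $\mathcal{P}_{SR}(\Omega)$. Indeed $\{P,\beta;P',\gamma\}$ satisfies the square identity coordinatewise, the values of $P,P'$ outside coordinate $i$ being irrelevant since they force $\gamma_m=\beta_m$ there. Thus each adjacent pair in $\Omega$ lets us toggle one coordinate between two values while freezing the rest; I would record these toggles in a graph $G_i$ on $[n_i]$ whose edges are the value-pairs realized by adjacent pairs of $\Omega$ differing in coordinate $i$. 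Strong connectivity makes $\alpha_1,\dots,\alpha_p$ the vertices of a connected tree under adjacency, and tracing the coordinate-$i$ value along any tree path shows that all coordinate-$i$ values occurring in the sequence lie in one component of $G_i$; since the sequence covers $[n_i]$, this forces $G_i$ to be connected. Any target $\gamma\in\NN$ is then reached from $\alpha_1$ by fixing coordinates one at a time: to set coordinate $i$, follow a path in $G_i$ and apply the flip lemma along each edge, which leaves previously fixed coordinates untouched, so $\mathcal{P}_{SR}(\Omega)=\NN$. I expect the only delicate bookkeeping to be the distinctness of the four tuples in each invoked square and the degenerate cases $\beta\in\{P,P'\}$, which are harmless since the target is then already present. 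Finally, strictness for $d\ge 3$ follows from \Cref{eg:GSfails} (unique recovery but not GS), \Cref{eg:GSholds} with \Cref{eg:Sfails} (GS but not S), \Cref{eg:Sholds} with \Cref{eg:SRfails} (S but not SR), and \Cref{eg:SRholds} with \Cref{eg:Afails} (SR but not A).
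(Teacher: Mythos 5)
Your proposal is correct. The first three implications coincide with the paper's argument: GS $\Rightarrow$ unique recovery via the $\FF_2$-span of the $v_\alpha$'s (you additionally spell out why $\dim\mathrm{span}_{\FF_2}\{v_\alpha:\alpha\in\NN\}=n-d+1$, which the paper leaves implicit), and S $\Rightarrow$ GS, SR $\Rightarrow$ S via the inclusions of propagated sets. The substantive difference is in A $\Rightarrow$ SR. The paper (\Cref{append:prop:AS-US}) runs an induction on $s=\sum_i|S_i|$, maintaining a grid $S_1\times\dots\times S_d\subseteq\mathcal{P}_{SR}(\Omega)$ and using A-propagation to produce a pair $\alpha_1\in\Omega\cap G$, $\alpha_2\in\Omega\setminus G$ differing in one coordinate, which enlarges one factor $S_j$; each point of the new slice is obtained by exactly the single-square move you isolate as the ``flip lemma.'' You instead prove connectivity of the per-coordinate graphs $G_i$ from the tree structure of the strongly connected sequence and then navigate from $\alpha_1$ to an arbitrary target by adjusting coordinates one at a time. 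Both arguments rest on the identical elementary step; the paper's grid induction keeps the bookkeeping local to the A-propagation sequence, while your version makes the reusable flip lemma and the coordinate-wise connectivity explicit, at the cost of a slightly longer connectivity argument (which you carry out correctly, including the degenerate cases where the four tuples of a square fail to be distinct). Your treatment of strictness via the same four examples matches the paper.
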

\begin{proof}
  The instances showing that the implications are strict are given in \Cref{eg:GSfails,eg:Sfails,eg:SRfails,eg:Afails}.
  We proceed to prove each of the implications:
  \begin{itemize}
    \item (GS $\implies$ Unique recovery)
      Consider the vector space $L \subset \FF_2^n$ spanned by the vectors $v_\alpha$, for $\alpha \in \Omega$.
      The definition of GS-propagation
      implies that for every $\beta \in \NN$ the vector $v_\beta$ lies in the $L$.
      It follows that $\rank(V^{\FF_2}_{\Omega}) = n-d+1$, and hence unique recovery holds.
    \item (S $\implies$ GS)
      Follows from a square also being a generalized square.
    \item (SR $\implies$ S)
      Follows directly from the definitions.
    \item (A $\implies$ SR)
      This part of the proof appears in \Cref{append:prop:AS-US}.
      \qedhere
  \end{itemize}
\end{proof}

For the special case $d=2$ (corresponding to matrix completion), the propagation conditions are all equivalent to unique recovery, as stated next.

\begin{proposition}\label{matrix_condition_equivalence}
    If $d\!=\!2$,
    the four propagation conditions are equivalent to unique recovery.
    Moreover, unique recovery is equivalent to
    the connectedness of the bipartite graph with vertex set $V= \{u_1, \dots, u_{n_1}\} \cup \{v_1, \dots, v_{n_2}\}$,
    where $(u_i, v_j)$ are connected if and only if $(i,j) \in\nobreak \Omega$.
\end{proposition}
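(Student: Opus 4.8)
The plan is to route everything through the rank characterization of \Cref{prop:rank_uniqueness} together with the chain of implications already established in \Cref{thm:condition_equivalence}. For $d=2$ a tuple is a pair $\alpha=(i,j)$ with $i\in[n_1]$, $j\in[n_2]$, and its indicator vector $v_\alpha\in\FF_2^{n_1+n_2}$ has exactly two nonzero entries: a $1$ in position $i$ of the first block and a $1$ in position $j$ of the second block. Consequently, $V_\Omega^{\FF_2}$ is precisely the (unoriented) vertex--edge incidence matrix, over $\FF_2$, of the bipartite graph $G$ with parts $\{u_1,\dots,u_{n_1}\}$ and $\{v_1,\dots,v_{n_2}\}$ and an edge $u_i v_j$ for each $(i,j)\in\Omega$. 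This identification is the bridge between the algebraic condition and the combinatorial one.

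First I would compute the $\FF_2$-rank of this incidence matrix. A vector $x\in\FF_2^{n_1+n_2}$ lies in its right kernel if and only if $x_{u_i}=x_{v_j}$ for every edge $(i,j)\in\Omega$, i.e.\ if and only if $x$ is constant on each connected component of $G$ (isolated vertices, which correspond to indices not appearing in $\Omega$, counting as their own singleton components). Hence the nullity equals the number $c$ of connected components of $G$, so that $\rank_{\FF_2}(V_\Omega^{\FF_2})=(n_1+n_2)-c$. By \Cref{prop:rank_uniqueness}, unique recovery is equivalent to this rank being $n_1+n_2-1 = n-d+1$, that is, to $c=1$. This shows that unique recovery holds if and only if $G$ is connected, establishing the second assertion of the proposition.

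It remains to collapse the four propagation conditions onto unique recovery. \Cref{thm:condition_equivalence} already supplies the implications A $\Rightarrow$ SR $\Rightarrow$ S $\Rightarrow$ GS $\Rightarrow$ unique recovery, so it suffices to close the loop by proving that unique recovery implies A-propagation when $d=2$. Assuming unique recovery, $G$ is connected and therefore spans all $n_1+n_2$ vertices. I would then order the edges of a connected spanning subgraph (for instance a spanning tree of $G$) as $\alpha_1,\dots,\alpha_p\in\Omega$ so that each $\alpha_k$ shares an endpoint with some earlier $\alpha_i$; this ordering exists by the standard greedy construction of a connected subgraph edge by edge. Since for $d=2$ sharing $d-1=1$ coordinate is exactly sharing a vertex of $G$, this sequence is strongly connected, and because a connected spanning subgraph touches every vertex it covers $[n_1]$ and $[n_2]$ coordinate-wise. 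Thus A-propagation holds, closing the chain and making all four conditions equivalent to unique recovery.

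The only genuinely delicate point is the rank computation over $\FF_2$. It is worth emphasizing that, unlike the real unoriented incidence matrix, whose rank distinguishes bipartite from non-bipartite components, over $\FF_2$ the kernel condition $x_{u_i}=x_{v_j}$ yields exactly one degree of freedom per component, so the clean formula $\rank_{\FF_2}(V_\Omega^{\FF_2})=(n_1+n_2)-c$ holds with no bipartiteness caveat (and here $G$ is bipartite in any case). Everything else is a routine translation between the combinatorics of $\Omega$ and elementary bipartite-graph connectivity.
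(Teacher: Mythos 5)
Your proof is correct, and it follows the same logical skeleton as the paper's --- establish the bipartite-graph characterization of unique recovery, then close the implication chain of \eqref{eq:conditions} by showing that unique recovery forces A-propagation when $d=2$ --- but where the paper simply outsources both of these facts to citations (\cite[Lem.~1]{cosse2021stable} for the equivalence of A-propagation and unique recovery, and \cite{kiraly2015algebraic} for the graph characterization), you prove them from scratch. Your route to the graph characterization is particularly clean: identifying $V_\Omega^{\FF_2}$ with the $\FF_2$ vertex--edge incidence matrix of the bipartite graph, computing its nullity as the number of connected components (the kernel condition $x_{u_i}=x_{v_j}$ on each edge forces constancy on components, with no orientation or bipartiteness caveat over $\FF_2$), and invoking \Cref{prop:rank_uniqueness} gives connectedness $\iff$ rank $=n-1=n-d+1$ $\iff$ unique recovery in a few lines. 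Your spanning-tree ordering argument for unique recovery $\Rightarrow$ A-propagation is also sound: for $d=2$, sharing $d-1=1$ coordinate is exactly sharing a vertex, a BFS/DFS edge ordering of a spanning tree is strongly connected in the paper's sense, and a spanning tree covers every vertex, hence both coordinate sets. What your version buys is a self-contained proof that makes the proposition independent of the two external references and makes transparent \emph{why} the $d=2$ case collapses (the incidence-matrix structure); what the paper's version buys is brevity. No gaps.
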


\begin{proof}
  The equivalence of A-propagation and unique recovery for $d=2$ is proved in \cite[Lem.~1]{cosse2021stable}.
  Together with the chain of implications in~\eqref{eq:conditions},
  we conclude that all the propagation conditions are equivalent for $d=2$.
  The bipartite graph characterization of unique recovery can be found in \cite{kiraly2015algebraic}.
\end{proof}

We point out that each of the propagation conditions leads to a simple sequential algorithm for exact rank-one tensor completion.
For instance, in the case of GS-propagation, if $\{\alpha_1,\alpha_2,\alpha_3,\beta\}$ is a GS and the entries of the tensor in positions $\alpha_1,\alpha_2,\alpha_3$ are known, then it is easy to complete the entry in position~$\beta$,
and we can keep completing entries following the propagation path.
However, these sequential algorithms are highly unstable in the presence of noise.
Indeed, \cite[\S1.1]{cosse2021stable} illustrates that the presence of noise leads to errors in the recovered entries that increase exponentially in the length of the propagated sequence.

\subsection{Propagation conditions for random observations}

We next show that A-propagation (the strongest propagation condition) is satisfied with high probability when the observations are uniformly random and are on the order of $O(\sqrt{N}\polylog n)$.

\begin{theorem}\label{general_hp}
  For any $d\in \mathbb{Z}_+$, and $3\leq n_1\leq n_2\leq\dots\leq n_d$,
  such that
  $n_{i-1}/\sqrt{n_i}\geq \tfrac{2}{i-1}$ for $i\geq 3$.
  Let $\Omega\subseteq \NN$ be a random subset, so that each $(i_1,i_2,\dots,i_d)$ lies in $\Omega$, independently with probability $p$, where, with a constant $C\geq 2 \prod_{i=3}^d \big((1 - n_i^{-\sqrt{n_i}})^{-1}\big)$, 
\begin{equation*}
    p \geq
    \frac{C}{\sqrt{N}} \prod_{i=2}^d \log(n_i)
\end{equation*}
Then $\Omega$ satisfies A-propagation with probability at least $1-\sum_{i=2}^{d} \varepsilon_i$, where

\small{\begin{equation}\label{bound whp}
    \varepsilon_i = 
    \begin{cases}
    2\max \big\{n_2^{-C^2\log(n_2)/4}, n_2^{-(C-1)\sqrt{n_2/n_1}}\big\},& i=2 \\
    2\max\{(i-1)n_i^{1-\log(n_i)}, n_i^{1-(k-1)(n_{i-1}/\sqrt{n_{i}})}\},& i =  3,\dots,d.
    \end{cases}
\end{equation}
}
\end{theorem}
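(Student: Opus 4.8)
The plan is to argue by induction on the order $d$, incorporating one axis at a time and, at the step that introduces axis $i$, charging the failure probability $\varepsilon_i$ to a union bound over the $n_i$ possible values of the new coordinate. The ordering $n_1\le\cdots\le n_d$ is used throughout (so the largest, hardest axis is added last), and the hypothesis $n_{i-1}/\sqrt{n_i}\ge \tfrac{2}{i-1}$ enters only at the very end, to force the per-slice failure exponent $(i-1)(n_{i-1}/\sqrt{n_i})$ to be at least $2$, which is what makes the union bound over the $n_i$ slices tend to $0$. For the base case $d=2$, \Cref{matrix_condition_equivalence} reduces A-propagation of a $[n_1]\times[n_2]$ mask to connectivity of the random bipartite graph $G(n_1,n_2,p)$. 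I would run the standard no-isolated-vertex analysis: isolation of a vertex on the smaller side has probability $(1-p)^{n_2}\le n_2^{-C\sqrt{n_2/n_1}}$, and a union bound over the $n_1$ such vertices yields the term $n_2^{-(C-1)\sqrt{n_2/n_1}}$ of $\varepsilon_2$, while a Chernoff bound on degrees accounts for the term $n_2^{-C^2\log n_2/4}$.

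\textbf{Passing to the shadow.} For the inductive step write the index set as $[n_1]\times\cdots\times[n_{i-1}]$ times $[n_i]$, with $N_{i-1}:=n_1\cdots n_{i-1}$, and consider the \emph{shadow} $\tilde\Omega:=\{\alpha'\in[n_1]\times\cdots\times[n_{i-1}] : (\alpha',j)\in\Omega \text{ for some } j\in[n_i]\}$, the projection of $\Omega$ that forgets the last coordinate. The crucial observation is that $\tilde\Omega$ is again a Bernoulli random subset, each configuration included independently with probability $\tilde p=1-(1-p)^{n_i}$, because distinct configurations read off disjoint blocks of $\Omega$. A one-line computation using $\sqrt{n_i}\,\log n_i\ge 1$ shows that $\tilde p$ exceeds the $(i{-}1)$-dimensional sampling threshold $\tfrac{C}{\sqrt{N_{i-1}}}\prod_{\ell=2}^{i-1}\log n_\ell$ --- in fact by the large factor $\sqrt{n_i}\,\log n_i$ --- so the induction hypothesis applies to $\tilde\Omega$ and, with probability at least $1-\sum_{\ell=2}^{i-1}\varepsilon_\ell$, produces a strongly connected set $A'\subseteq\tilde\Omega$ covering the coordinates of axes $1,\dots,i-1$.

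\textbf{Covering the new axis.} It then remains to realize $A'$ inside the full index set and to cover $[n_i]$. I attach each slice $j\in[n_i]$ by a fiber along axis $i$: slice $j$ is attached, and value $j$ thereby covered, as soon as some configuration $\alpha'\in A'$ has $(\alpha',j)\in\Omega$. To make the $n_i$ attachment events tractable I would exploit monotonicity of the propagation operator: letting $A'_{-j}$ denote the component built from the shadow of the slices other than $j$, we have $A'_{-j}\subseteq A'$ and $A'_{-j}$ is independent of slice $j$, so the attachment fails with probability at most $(1-p)^{|A'_{-j}|}\le e^{-p|A'_{-j}|}$. A Chernoff bound guaranteeing that $|A'_{-j}|$ is not far below its mean is the source of the term $(i-1)\,n_i^{1-\log n_i}$, while the calibration $p\,|A'_{-j}|\gtrsim (i-1)(n_{i-1}/\sqrt{n_i})\log n_i$ turns the per-slice bound into $n_i^{-(i-1)(n_{i-1}/\sqrt{n_i})}$; summing over the $n_i$ slices gives the second term of $\varepsilon_i$. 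The $\max$ in $\varepsilon_i$ simply records the two regimes $p n_i\lesssim 1$ and $p n_i\gtrsim 1$ in estimating $\tilde p$.

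\textbf{Main obstacle.} I expect the technical heart to be the \emph{lifting} implicit in ``realize $A'$ inside the full index set.'' Strong connectivity permits only single-coordinate moves, so two configurations of $A'$ that are adjacent in $[n_1]\times\cdots\times[n_{i-1}]$ need not be adjacent once they are placed in different slices; joining them requires a common slice in which both configurations are observed. The core difficulty is to show that, precisely because $\tilde\Omega$ sits far above its threshold, the slices are thick enough that the lifted structure stays strongly connected, and that a single common covering component can serve all slices at once without destroying the independence used in the attachment union bound. Carefully decoupling the construction of this common component from the per-slice fibers, while keeping the bookkeeping consistent with the implications of \Cref{thm:condition_equivalence}, is where I anticipate the real work.
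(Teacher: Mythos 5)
Your high-level architecture matches the paper's: induct on the order, project onto the first $i-1$ axes (your ``shadow'' is exactly the paper's $\varphi(\Omega)$, with the same inclusion probability $\tilde p = 1-(1-p)^{n_i}$), apply the induction hypothesis to the projection, and then handle the new axis by a union bound over its $n_i$ values. However, there is a genuine gap: the step you yourself flag as the ``main obstacle'' --- lifting strong connectivity from the shadow back to the full index set --- is precisely the substantive content of the inductive step, and you do not supply an argument for it. The paper resolves it by fixing a \emph{canonical} strongly connected sequence $\alpha_1,\dots,\alpha_K$ (of length $K=\sum_{\ell=1}^{i-1}n_\ell - i + 2$) for the shadow, and declaring an edge $(j,k)$ of that sequence \emph{connected} if the two fibers $F_j(\Omega),F_k(\Omega)$ contain observations sharing the same last coordinate; a single such pair fails with probability $(1-p_i^2)^{n_i}$, and the union bound over the $K$ edges is exactly the source of the term $(i-1)\,n_i^{1-\log n_i}$ in $\varepsilon_i$. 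You instead attribute that term to a Chernoff bound on $|A'_{-j}|$ in the covering step, which both misallocates the error budget (your $\varepsilon_i$ is then entirely consumed by covering, leaving nothing for the lifting failure event) and leaves the lifting unproved. Without the connected-pair analysis, the lifted configurations placed in different slices need not form a strongly connected sequence, so A-propagation of $\Omega$ does not follow from A-propagation of $\tilde\Omega$ plus covering of $[n_i]$.

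A second, smaller issue is the base case. \Cref{matrix_condition_equivalence} reduces $d=2$ to connectivity of a random bipartite graph, but ``no isolated vertices plus a Chernoff bound on degrees'' does not imply connectivity: a graph can have all degrees large and still be disconnected. The paper's \Cref{lem_nonsym_mat} instead enumerates all disconnecting cuts of $V_2$ of each cardinality $k$, bounding $\binom{n_2}{k}(1-p_1^2p_2^2)^{n_1k(n_2-k)}$ and summing over $k$; that cut enumeration is where the term $n_2^{-C^2\log(n_2)/4}$ actually comes from, while the isolated-vertex computation on $V_1$ gives the other term $n_2^{-(C-1)\sqrt{n_2/n_1}}$. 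Your covering analysis for the new axis (conditioning on $A'_{-j}$ to restore independence from slice $j$) is a legitimate alternative to the paper's bound $n_i(1-p_i)^K$ and could in principle be carried through, but as written the proposal is a plan that defers the two steps where the actual work lies.
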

\begin{proof}
See \Cref{append:general_hp}.
\end{proof}

\section{SDP relaxation for exact completion}\label{section:sdp-e}

In this section we introduce our SDP relaxation for exact rank-one tensor completion.
Our main result of the section is \Cref{thm:tightsdp2},
which shows that the relaxation solves the exact completion problem under the SR-propagation condition.
We also show that the SDP can be modified to handle instances that satisfy S-propagation.

This section is divided into four parts.
First, we introduce our relaxations and state \Cref{thm:tightsdp2}.
Then, we explain how to modify our SDP to handle more instances.
Next, we introduce some background on sum of squares. 
The last part is devoted to the proof of \Cref{thm:tightsdp2}.

\subsection{Relaxation and main theorem}

Our SDP relaxation relies on a characterization of rank one tensors in terms of quadratic equations.
It is known that a tensor $T$ is rank one if and only if
\begin{align*}
  T_{\alpha_1}T_{\alpha_2} = T_{\alpha_3}T_{\alpha_4},\qquad
  \forall \{\alpha_1,\alpha_2;\alpha_3,\alpha_4\}
  \in \mathcal{A}
\end{align*}

where $\mathcal{A} \subset \NN$ is the set of all squares.
For instance, in the case $\NN = [2]\times[2]\times[2]$ we have 12 quadratic equations,
given by the squares in \eqref{eq:squares12}.

Let $x = \vec(T) \in \RR^{N}$ be the vectorization of the tensor~$T$.
We denote $x_\alpha = T_\alpha$ for $\alpha \in \NN$.
Let the matrix $X = x x^T \in \SS^{N}$
with entries $X_{\alpha, \beta} = T_{\alpha} T_\beta$.
The tensor completion problem \eqref{eq:exactcompletion} can be equivalently phrased as
\begin{align*}
  \text{ find } \quad &x\in \RR^{N}, X \in \SS^N\\
  \quad\text{ s.t. }\quad
  &x_{\alpha} = \hat{T}_{\alpha}, \quad\forall \alpha \in \Omega,\\
  &X_{\alpha_1,\alpha_2} = X_{\alpha_3,\alpha_4}, \quad \forall \{\alpha_1,\alpha_2;\alpha_3,\alpha_4\} \in \mathcal{A},\\
  &X = x x^T
\end{align*}
Consider the following SDP relaxation:
\begin{equation} \label{eq:sdp2}
\tag{SDP-E}
\begin{aligned}
  \min_{x\in \RR^{N}, X \in \SS^N} \quad & \tr(X)\\
  \quad\text{ s.t. }\quad
  &x_{\alpha} = \hat{T}_{\alpha}, \quad\forall \alpha \in \Omega,\\
  &X_{\alpha,\alpha} = \hat{T}_{\alpha}^2, \quad\forall \alpha \in \Omega,\\
  &X_{\alpha_1,\alpha_2} = X_{\alpha_3,\alpha_4}, \quad \forall \{\alpha_1,\alpha_2;\alpha_3,\alpha_4\} \in \mathcal{A},\\
  &\boldsymbol{X}:=\begin{pmatrix} 1 & x^T \\ x & X \end{pmatrix} \succeq 0
\end{aligned}
\end{equation}
Note that condition 
$\left(\begin{smallmatrix} 1 & x^T \\ x & X \end{smallmatrix}\right) \succeq 0$
is equivalent to $X - x x^T\succeq 0$.
By minimizing the trace of $X$ we attempt to close the gap between $X$ and $x x^T$. 
\begin{remark}
     \eqref{eq:sdp2} is a simplification of the 2nd level of the Lasserre hierarchy.
     In particular, its matrix variable has size $N \times N$,
     instead of $O(n^d) 
     \times O(n^d)$, since it uses multilinear monomials.
     It also ignores the higher-order affine constraints, which have the form $X_{\alpha,\star} = \hat{T}_{\alpha} x$ for $\alpha \in \Omega$.
\end{remark}

From now on, we assume that the completion problem \eqref{eq:exactcompletion} has a unique solution,
and let $T_\beta$ for $\beta \in \NN$ be the completed values.
Let $\bar x \in \RR^N$ be the vector with entries $\bar x_\beta = T_\beta$.
Also consider the matrix
\(
    \boldsymbol{\bar X} :=
    \bigl(\begin{smallmatrix} 1 & \bar x^T\\ \bar x & \bar x\bar x^T \end{smallmatrix}\bigr).
\)
Notice that $\boldsymbol{\bar X}$ is feasible for \eqref{eq:sdp2}.

We say that the relaxation is \emph{tight} the SDP has a unique optimal solution that has rank-one, namely the matrix $\boldsymbol{\bar X}$.

A necessary condition for the SDP relaxation to be tight is that the completion problem \eqref{eq:exactcompletion} has a unique solution.
The main result of this section is that the SDP relaxation is tight when the S-propagation condition holds (which implies a unique completion). 

\begin{theorem}\label{thm:tightsdp2}
  Assume that $\Omega$ satisfies SR-propagation
  and that the values $(\hat{T}_\alpha)_{\alpha \in \Omega}$ are nonzero.
  Then the relaxation \eqref{eq:sdp2} is tight.
\end{theorem}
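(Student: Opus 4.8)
The plan is to establish tightness by constructing a dual certificate for the SDP \eqref{eq:sdp2}, i.e., a symmetric matrix $S \succeq 0$ that serves as a dual multiplier / slack and whose kernel pins down $\boldsymbol{\bar X}$ as the unique optimizer. The natural strategy is to exploit the fact that $\tr(X)$ is being minimized over a feasible set containing the rank-one point $\boldsymbol{\bar X} = \bigl(\begin{smallmatrix} 1 & \bar x^T \\ \bar x & \bar x \bar x^T \end{smallmatrix}\bigr)$. I would show that $\boldsymbol{\bar X}$ is optimal and that it is the unique optimum. Since $\boldsymbol{X} \succeq 0$ forces $X - x x^T \succeq 0$, and the objective is $\tr(X)$, any feasible solution satisfies $\tr(X) \geq \tr(x x^T) = \|x\|^2$; the obstruction to tightness is therefore the gap between $X$ and $x x^T$. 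The crux is that the square-equality constraints $X_{\alpha_1,\alpha_2} = X_{\alpha_3,\alpha_4}$ together with the observed diagonal and linear entries propagate enough information, under SR-propagation, to force $X = \bar x \bar x^T$ and $x = \bar x$.

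The concrete approach I favor is a propagation argument that mirrors the definition of $\mathcal{P}_{SR}(\Omega)$. First, I would record the base facts: for $\alpha \in \Omega$ we know $x_\alpha = \hat T_\alpha$ and $X_{\alpha,\alpha} = \hat T_\alpha^2$, so along the observed diagonal $X$ already agrees with $x x^T$, i.e., the Schur complement $X - x x^T$ has zero diagonal entries on $\Omega$. Because $X - x x^T \succeq 0$, a zero diagonal entry forces the entire corresponding row and column of $X - x x^T$ to vanish (a PSD matrix with a zero diagonal entry has zeros throughout that row/column). This means $X_{\alpha,\beta} = x_\alpha x_\beta$ for all $\alpha \in \Omega$ and all $\beta$. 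This observation is the engine: it converts diagonal information into off-diagonal rank-one structure. I would then run the SR-propagation recursion: given a square $\{\alpha_1,\beta;\alpha_2,\gamma\}$ with $\alpha_1,\alpha_2 \in \Omega$ and $\beta$ already "resolved," the constraint $X_{\alpha_1,\beta} = X_{\alpha_2,\gamma}$, combined with the just-established identities $X_{\alpha_1,\beta} = x_{\alpha_1} x_\beta$ and $X_{\alpha_2,\gamma} = x_{\alpha_2} x_\gamma$, yields a relation determining $x_\gamma$ (and forces $X_{\alpha_2,\gamma} = x_{\alpha_2}x_\gamma$, hence again a zero Schur-complement entry that propagates the rank-one structure to $\gamma$). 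Iterating over the SR-propagated set, which equals $\NN$ by hypothesis, forces $x = \bar x$ by uniqueness of recovery (\Cref{prop:rank_uniqueness} via the chain \eqref{eq:conditions}) and $X = \bar x \bar x^T$ on all entries.

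The main obstacle I anticipate is making the propagation step fully rigorous in the off-diagonal entries and confirming that the recursion resolves \emph{every} entry of $X$, not just the diagonal. Specifically, after fixing all $x_\gamma = \bar x_\gamma$, one must still argue $X_{\gamma,\delta} = \bar x_\gamma \bar x_\delta$ for arbitrary pairs $\gamma,\delta$, not merely those touched directly by a single square. Here I expect to again invoke the PSD zero-diagonal mechanism: once I show $X_{\gamma,\gamma} = \bar x_\gamma^2$ for every $\gamma$ (which follows because each resolved index $\gamma$ inherits, through its defining square and the Schur-complement identity, a vanishing diagonal Schur-complement entry), the nonnegativity $X - x x^T \succeq 0$ with an identically zero diagonal forces $X - x x^T = 0$ outright. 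So the key technical claim to nail down is that SR-propagation drives the diagonal of the Schur complement to zero everywhere; the full matrix equality $X = \bar x \bar x^T$ and hence uniqueness and rank-oneness of the optimum then follow immediately, completing the proof that \eqref{eq:sdp2} is tight.
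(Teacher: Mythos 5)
Your route is genuinely different from the paper's. The paper proves tightness by constructing a dual/SOS certificate: \Cref{lem:sdp-sufficient} reduces tightness to exhibiting an identity $\sum_\beta(x_\beta^2-T_\beta^2)=\sigma \bmod (I_2+J_2)$ with $\sigma$ SOS and Gram matrix of rank $N$, and \Cref{lem:sdp2} builds $\sigma$ by induction along SR-propagation sequences; complementary slackness and the corank-one kernel then pin down $\boldsymbol{\bar X}$. You instead argue purely on the primal side: the observed diagonal constraints make $(X-xx^T)_{\alpha,\alpha}=0$ for $\alpha\in\Omega$, the PSD zero-diagonal mechanism forces $X_{\alpha,\beta}=x_\alpha x_\beta$ for all $\alpha\in\Omega$, $\beta\in\NN$, and the SR-propagation squares $\{\alpha_1,\beta;\alpha_2,\gamma\}$ with $\alpha_1,\alpha_2\in\Omega$ then give $\hat T_{\alpha_1}x_\beta=\hat T_{\alpha_2}x_\gamma$, which (using $\hat T_{\alpha_2}\neq 0$ and the fact that $\bar x$ satisfies the same recursion) determines $x_\gamma=\bar x_\gamma$ entry by entry. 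This part is correct and is arguably more elementary and transparent than the paper's argument. What the paper's heavier machinery buys is the explicit dual matrix $\bar Q$ with $\rank \bar Q=N$, which is reused verbatim in the noisy analysis (\Cref{lem:dual_bound} and \Cref{thm:sdpp_stable}); your primal argument does not produce that object.

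There is, however, one genuine error in your closing step: the claim that each unobserved $\gamma$ ``inherits a vanishing diagonal Schur-complement entry'' from its defining square. The squares consist of four \emph{distinct} tuples, so the equality constraints $X_{\alpha_1,\alpha_2}=X_{\alpha_3,\alpha_4}$ never touch a diagonal entry $X_{\gamma,\gamma}$ with $\gamma\notin\Omega$; those entries are constrained only from below by PSD. Concretely, $X=\bar x\bar x^T+\diag(w)$ with $w\geq 0$ supported off $\Omega$ is feasible for every such $w$, so the constraints alone cannot force $X_{\gamma,\gamma}=\bar x_\gamma^2$. The fix is immediate and you already have all the pieces: once the constraints force $x=\bar x$, every feasible point satisfies $X\succeq \bar x\bar x^T$, hence $\tr(X)\geq \tr(\bar x\bar x^T)$ with equality iff $X=\bar x\bar x^T$; since $\boldsymbol{\bar X}$ is feasible, minimizing the trace makes it the unique optimum. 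You should replace the ``inherited zero diagonal'' claim with this appeal to the objective, after which your proof is complete.
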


By \Cref{matrix_condition_equivalence} the propagation conditions are equivalent to unique recovery in the special case $d=2$.
Hence, we have the following corollary.

\begin{corollary}\label{thm:tightmatrix}
  In the case $d=2$
  (corresponding to matrix completion),
  the relaxation~\eqref{eq:sdp2} is tight if and only if the recovery problem has a unique solution.
\end{corollary}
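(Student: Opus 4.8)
The plan is to give a direct primal argument, with no dual certificate. Since $\boldsymbol{X}\succeq 0$ is PSD of size $N+1$ with a $1$ in its top-left corner, I would first factor it as a Gram matrix: there exist vectors $g_0$ and $(g_\beta)_{\beta\in\NN}$ with $\langle g_0,g_0\rangle=1$, $\langle g_0,g_\beta\rangle=x_\beta$, and $\langle g_\beta,g_\gamma\rangle=X_{\beta,\gamma}$. The whole proof then reduces to two claims: every feasible $\boldsymbol{X}$ already satisfies $x=\bar x$, and the trace objective forces $g_\beta=T_\beta\,g_0$ for all $\beta$, so that $\boldsymbol{X}=\boldsymbol{\bar X}$.

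The first ingredient is a Cauchy--Schwarz observation on the observed entries. For $\alpha\in\Omega$ the constraints give $\langle g_0,g_\alpha\rangle=x_\alpha=\hat T_\alpha$ and $\|g_\alpha\|^2=X_{\alpha,\alpha}=\hat T_\alpha^2$, while $\|g_0\|=1$; hence $\langle g_0,g_\alpha\rangle^2=\|g_0\|^2\|g_\alpha\|^2$, so Cauchy--Schwarz holds with equality and $g_\alpha=\hat T_\alpha\,g_0$ for every $\alpha\in\Omega$. Crucially this holds at \emph{any} feasible point, not just the optimum. Next I would propagate the scalar values $x_\beta$ along the SR recursion, proving $x_\beta=T_\beta$ for every $\beta\in\mathcal{P}_{SR}(\Omega)$. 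The base case $\beta\in\Omega$ is immediate. For the inductive step, let $\{\alpha_1,\beta;\alpha_2,\gamma\}$ be a square with $\alpha_1,\alpha_2\in\Omega$ and $x_\beta=T_\beta$ already known. The square constraint reads $X_{\alpha_1,\beta}=X_{\alpha_2,\gamma}$; substituting $g_{\alpha_1}=\hat T_{\alpha_1}g_0$ and $g_{\alpha_2}=\hat T_{\alpha_2}g_0$ turns it into $\hat T_{\alpha_1}x_\beta=\hat T_{\alpha_2}x_\gamma$. Since $\hat T_{\alpha_2}\neq 0$, this determines $x_\gamma=(\hat T_{\alpha_1}/\hat T_{\alpha_2})T_\beta$, which equals $T_\gamma$ because the completed tensor satisfies the same identity $T_{\alpha_1}T_\beta=T_{\alpha_2}T_\gamma$ with $T_{\alpha_i}=\hat T_{\alpha_i}$. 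As SR-propagation gives $\mathcal{P}_{SR}(\Omega)=\NN$, the claim yields $x=\bar x$ at every feasible point.

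Finally I would invoke the trace objective. From $\boldsymbol{X}\succeq 0$ (equivalently $X\succeq xx^T$) the diagonal obeys $X_{\beta,\beta}\ge x_\beta^2=T_\beta^2$, so $\tr(X)\ge\sum_\beta T_\beta^2=\tr(\bar x\bar x^T)$, the value attained by $\boldsymbol{\bar X}$. Hence $\boldsymbol{\bar X}$ is optimal, and any optimal $\boldsymbol{X}$ must saturate every diagonal inequality, $X_{\beta,\beta}=x_\beta^2$. Combined with $\langle g_0,g_\beta\rangle=x_\beta$ and $\|g_0\|=1$, this is once more a Cauchy--Schwarz equality, forcing $g_\beta=x_\beta g_0=T_\beta g_0$, whence $X_{\beta,\gamma}=T_\beta T_\gamma$ and $\boldsymbol{X}=\boldsymbol{\bar X}$. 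This gives both optimality and uniqueness of the rank-one solution, i.e.\ tightness.

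The main obstacle, and the reason the theorem is stated for SR- rather than the weaker S-propagation, is confined to the propagation step. There the two anchor corners $\alpha_1,\alpha_2$ are assumed to lie in $\Omega$, which is exactly what lets me replace $g_{\alpha_1},g_{\alpha_2}$ by multiples of $g_0$ at \emph{any} feasible point. Under plain S-propagation an anchor may be unobserved, and collapsing it to a multiple of $g_0$ is valid only at optimality (it relies on the saturated diagonal), creating a circular dependence between the propagation and the trace argument; this circularity is precisely what the modified SDP is designed to break. I would also track the nonvanishing hypothesis on $\hat T$, which is used both to divide by $\hat T_{\alpha_2}$ and to guarantee that the completion values $T_\beta$ are well defined.
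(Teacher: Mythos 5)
Your primal Gram-vector argument is correct as far as it goes, and it is a genuinely different route to the sufficiency half of the statement. The paper obtains tightness under SR-propagation (\Cref{thm:tightsdp2}) by building a dual SOS certificate with a rank-$N$ Gram matrix (\Cref{lem:sdp-sufficient} and \Cref{lem:sdp2}); you instead work entirely on the primal side, collapsing $g_\alpha$ to $\hat T_\alpha g_0$ for $\alpha\in\Omega$ via equality in Cauchy--Schwarz, propagating the scalars $x_\beta$ along SR-squares (which is valid precisely because both anchors of an SR-square are observed, as you note), and then using the trace objective together with termwise saturation of $X_{\beta,\beta}\ge x_\beta^2$ to force $\boldsymbol{X}=\boldsymbol{\bar X}$. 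This is more elementary than the paper's SOS construction, and your closing remark correctly diagnoses why the unweighted SDP needs SR- rather than S-propagation.

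However, as a proof of the corollary the proposal has a genuine gap: the statement is an if-and-only-if about \emph{unique recovery} for $d=2$, and neither direction of that equivalence is actually established. For the forward direction you assume SR-propagation throughout, but the hypothesis of the corollary is only uniqueness of the completion; the missing bridge is \Cref{matrix_condition_equivalence}, which says that for $d=2$ all four propagation conditions coincide with unique recovery. That proposition is the entire content of the paper's proof of the corollary, and it is not free --- it rests on the Cosse--Demanet result that, for matrices, A-propagation is equivalent to connectivity of the bipartite observation graph, combined with the implication chain of \Cref{thm:condition_equivalence}. Without it you have reproved \Cref{thm:tightsdp2} (for general $d$), not the corollary; indeed your write-up never uses $d=2$ anywhere. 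The converse direction (tightness implies unique recovery) is also omitted; it is essentially immediate from the paper's definition of tightness, which requires the unique optimizer to be the matrix $\boldsymbol{\bar X}$ built from \emph{the} completion, but for an ``if and only if'' it should at least be stated.
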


A similar result to \Cref{thm:tightmatrix} was shown in \cite{cosse2021stable} for a different SDP relaxation,
involving a larger PSD matrix.

\subsection{Reweighted SDP with weaker assumptions}

The following example illustrates that there are instances where the completion problem has a unique solution,
but yet \eqref{eq:sdp2} is not {tight}.

\begin{example}\label{non_tight_example}
    Consider the mask $\Omega$ from \Cref{eg:Sholds},
    and let $T = (1,1,10)\otimes (1,1,1,10) \otimes (10,1,1)$ be the true tensor.
    An interior point solver returns an optimal solution of \eqref{eq:sdp2} of rank~19.
    Hence, the relaxation is not tight.
\end{example}

In order to deal with cases as the one in the example,
we consider variants of \eqref{eq:sdp2} obtained by modifying the trace objective by a weighted linear combination of the diagonal entries.
More precisely,
given a vector $w\in\RR^{N}$ of positive weights, consider:
\begin{equation} \label{eq:weighted_sdp}
\tag{SDP-$\mathrm{E}^w$}
\begin{aligned}
  \min_{x\in \RR^{N}, X \in \SS^N} \quad & \diag(w) \bullet X\\
  \quad\text{ s.t. }\quad
  &x_{\alpha} = \hat{T}_{\alpha}, \quad\forall \alpha \in \Omega,\\
  &X_{\alpha,\alpha} = \hat{T}_{\alpha}^2, \quad\forall \alpha \in \Omega,\\
  &X_{\alpha_1,\alpha_2} = X_{\alpha_3,\alpha_4}, \quad \forall \{\alpha_1,\alpha_2;\alpha_3,\alpha_4\} \in \mathcal{A},\\
  &\boldsymbol{X}:=\begin{pmatrix} 1 & x^T \\ x & X \end{pmatrix} \succeq 0
\end{aligned}
\end{equation}

The following theorems show that the use of weights allows to substitute the SR-propagation by the weaker GS-propagation conditions. 

\begin{theorem}\label{thm:tight_weighted_sdp}
  Assume that $\Omega$ satisfies GS-propagation
  and that the values $(\hat{T}_\alpha)_{\alpha \in \Omega}$ are nonzero.
  Then there exists a nonnegative weight vector $w$ such that the \eqref{eq:weighted_sdp} is tight and recovers the unique rank one tensor completion.
\end{theorem}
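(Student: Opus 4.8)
The plan is to build on the tightness result for \eqref{eq:sdp2} under SR-propagation (\Cref{thm:tightsdp2}) by constructing an appropriate dual certificate, but now allowing the weights $w$ to be chosen adaptively along a GS-propagation path. The key observation is that tightness of an SDP of the form \eqref{eq:weighted_sdp} is equivalent to exhibiting a dual multiplier that makes $\boldsymbol{\bar X}$ both feasible and optimal with the correct rank. Concretely, I would attach Lagrange multipliers $\lambda_\alpha$ to the diagonal constraints $X_{\alpha,\alpha}=\hat T_\alpha^2$, multipliers $\mu_\alpha$ to the linear constraints $x_\alpha=\hat T_\alpha$, and multipliers $\nu_{\{\alpha_1,\alpha_2;\alpha_3,\alpha_4\}}$ to the square constraints, and form the dual slack matrix. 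Tightness is certified by producing dual values so that the dual slack $\boldsymbol{S}$ is PSD with kernel exactly spanned by $(1,\bar x^T)^T$; this forces every optimal solution to equal $\boldsymbol{\bar X}$.

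\textbf{Steps in order.} First, I would recall from the proof of \Cref{thm:tightsdp2} the structure of the dual certificate constructed for SR-propagation, where the positivity of $\boldsymbol{S}$ was established by a propagation argument anchored on elements of $\Omega$. Second, I would observe that GS-propagation replaces the asymmetric role of $\Omega$-anchored squares with a symmetric four-term relation \eqref{generalized_square}; the extra flexibility of arbitrary (nonnegative) weights $w$ is precisely what compensates for losing the $\alpha_1,\alpha_2\in\Omega$ restriction. Third, I would process the GS-propagation order: enumerate $\NN$ as $\beta_1,\beta_2,\dots$ so that each $\beta_j$ is completed by a GS $\{\alpha_1,\alpha_2,\alpha_3,\beta_j\}$ with $\alpha_i$ preceding $\beta_j$, and define $w$ inductively so that the weight on $\beta_j$ is large enough to dominate the contributions needed to keep $\boldsymbol{S}$ PSD at that step while the earlier weights remain fixed. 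Fourth, I would verify complementary slackness: $\boldsymbol{S}\,\boldsymbol{\bar X}=0$ holds because $\boldsymbol{\bar X}$ is rank one with range $(1,\bar x^T)^T$, and this vector is designed to lie in $\ker \boldsymbol S$. Finally, I would confirm that the constructed $w$ is nonnegative (as the theorem demands) and that the rank-one minimizer $\boldsymbol{\bar X}$ is the \emph{unique} optimum, recovering the completed tensor.

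\textbf{The main obstacle} I anticipate is the inductive construction of the weights together with the simultaneous maintenance of positive semidefiniteness of $\boldsymbol S$ throughout the propagation. Unlike the SR case, where each propagation step is controlled by a single square whose two base indices lie in $\Omega$ and whose multiplier can be localized, a GS step couples four indices that may all be previously \emph{propagated} entries rather than genuinely observed ones. This means the PSD certificate must be built globally and the weight increments chosen so that no earlier step's positivity is destroyed by a later one. I expect to handle this by a careful ordering (processing entries in propagation order and choosing each new weight last, large enough relative to the already-fixed spectrum) combined with a Schur-complement or block-elimination argument showing that adding a large diagonal weight at position $\beta_j$ strictly increases the relevant Schur complement. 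A secondary technical point is ensuring the weights stay nonnegative even when a GS involves sign cancellations among the $\hat T$ values; here the nonvanishing hypothesis on $(\hat T_\alpha)_{\alpha\in\Omega}$ and the relation \eqref{generalized_square} over $\FF_2$ should guarantee the right sign consistency so the multipliers are well-defined.
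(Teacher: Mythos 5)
Your overall framework is the right one — the paper also certifies tightness by exhibiting a dual slack (equivalently, an SOS Gram) matrix that is PSD of rank $N$ with kernel spanned by $[1;\bar x]$, and it also inducts along the propagation order. But the mechanism you propose for constructing the weights has a genuine gap. The certificate you need is a polynomial identity $\sum_\beta w_\beta (x_\beta^2-T_\beta^2)=\sigma \bmod (I_2+J_2)$ with $\sigma$ SOS of Gram rank $N$; since the left-hand side vanishes at $\bar x$ for \emph{every} choice of $w$, the kernel condition is automatic, and the entire difficulty is whether the ideal multipliers can be chosen to make the residual SOS. "Choosing each new weight large enough relative to the already-fixed spectrum" does not address this: enlarging $w_{\beta_j}$ adds a positive diagonal term whose contribution at $\bar x$ must be exactly cancelled through the ideal, and whether cancelling multipliers exist that preserve positive semidefiniteness is precisely the combinatorial content of GS-propagation, not something a Schur-complement/domination argument can supply. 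In fact the paper's construction (\Cref{lem:tight_weighted_sdp}, and the companion result \Cref{thm:tight_sdp_square} with \Cref{alg:weights}) effectively makes the weights on later-propagated entries \emph{small} relative to their anchors, the opposite of your proposal: each propagated $g_\beta$ must "borrow" SOS mass from the entries it propagates from, and that borrowed amount must not exceed the weight already allocated there.

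The second, more structural omission is that the constraint set $J_2$ of \eqref{eq:weighted_sdp} encodes only \emph{squares} $\{\alpha_1,\alpha_2;\alpha_3,\alpha_4\}$, whereas a GS step uses the $\FF_2$ relation $v_\beta = v_{\alpha_i}+v_{\alpha_j}+v_{\alpha_k}$, which in general is not a square. Your proposal treats the GS relation as if it were directly available to the SDP. The paper's proof must first factor each genuine generalized square through a chain of honest squares by introducing auxiliary index tuples $\gamma_1,\gamma_2,\gamma_3$ (see \eqref{eq:append2}), so that $h_{\beta,\alpha_i}-h_{\gamma_1,\gamma_2}$, $h_{\alpha_i,\alpha_j}-h_{\gamma_1,\gamma_3}$, $h_{\alpha_i,\alpha_k}-h_{\gamma_2,\gamma_3}$ all lie in $J_2$; these auxiliary positions then acquire weights of their own. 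Relatedly, because the anchors $\alpha_i,\alpha_j,\alpha_k$ of a GS step may themselves be propagated rather than observed, the induction must carry not just an SOS residue but a controllable \emph{linear} defect $-2c_\beta f_\beta$ with an adjustable constant $c_\beta$ (the formulation of \eqref{eq:prop_weighted}), which is what allows the recursion to close; your proposal names this difficulty but does not supply the bookkeeping that resolves it. Without the square-decomposition of the GS relation and the linear-defect induction, the argument does not go through.
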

\begin{proof}
See \Cref{append:thm:tight_weighted_sdp}.
\end{proof}

Unfortunately, the above theorem does not indicate how to produce the vector~$w$.
If we use the S-propagation condition instead,
\Cref{alg:weights} from below generates a vector $w$ for which \eqref{eq:weighted_sdp} is tight.

\begin{theorem}\label{thm:tight_sdp_square}
  Assume that $\Omega$ satisfies S-propagation
  and that the values $(\hat{T}_\alpha)_{\alpha \in \Omega}$ are nonzero. Then there exists $t \in (0,1)$ such that
  for any $\vartheta \in (0,t)$
  \eqref{eq:weighted_sdp} is tight for
  the weight vector $w$ generated by \Cref{alg:weights}.
\end{theorem}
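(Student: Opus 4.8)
The plan is to prove Theorem~\ref{thm:tight_sdp_square} by exhibiting an explicit weight vector $w$, produced by \Cref{alg:weights}, and verifying that the feasible point $\boldsymbol{\bar X}$ is the \emph{unique} optimizer of \eqref{eq:weighted_sdp}. Since \Cref{thm:tight_weighted_sdp} already guarantees that \emph{some} nonnegative weight vector works under the weaker GS-propagation hypothesis, the real content here is constructive: under the stronger S-propagation assumption I want to track the square-propagation order and assign weights compatibly with it, so that the dual certificate can be built explicitly and the threshold $t \in (0,1)$ emerges naturally from the construction. I would first recall the structure of the proof of \Cref{thm:tightsdp2}: tightness is established by producing a dual multiplier (a matrix $S \succeq 0$ complementary to $\boldsymbol{\bar X}$, i.e.\ $S \boldsymbol{\bar X} = 0$) certifying that $\boldsymbol{\bar X}$ is optimal, and then arguing that the PSD constraint together with the square equations forces rank one. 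The weights $w$ enter the dual only through the objective $\diag(w)\bullet X$, so the certificate $S$ must be assembled from the weighted trace plus a combination of the square-equation multipliers.

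The key steps, in order, are as follows. First I would fix the S-propagation order: since $\Omega$ satisfies S-propagation, there is a sequence in which every $\beta \in \NN \setminus \Omega$ is produced by a square $\{\alpha_1,\alpha_2;\alpha_3,\beta\}$ with $\alpha_1,\alpha_2,\alpha_3$ already propagated. \Cref{alg:weights} presumably walks this order and assigns each newly propagated index a weight that is smaller than those of the three generators by a factor controlled by $\vartheta$; I would make this monotone decay explicit, so that the weight of an index at propagation depth $\ell$ scales like $\vartheta^{\ell}$ (or a comparable product). Second, with these weights in hand I would construct the dual certificate $S = \diag(w) + \sum_{\text{squares}} \lambda_Q (E_Q)$, where each $E_Q$ encodes the quadratic constraint $X_{\alpha_1,\alpha_2} = X_{\alpha_3,\alpha_4}$ of square $Q$, and choose the multipliers $\lambda_Q$ to cancel the contribution of $\diag(w)$ on the range of $\boldsymbol{\bar X}$ while keeping $S \succeq 0$. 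The monotone weight decay is exactly what lets me solve for the $\lambda_Q$ recursively along the propagation order, one square at a time, without circular dependencies. Third, I would verify complementary slackness $S\boldsymbol{\bar X}=0$ and strict feasibility of the primal, and then argue rank-one-ness of every optimizer: because $S \succeq 0$ has the correct kernel and the square equations are satisfied, any optimal $X$ must equal $\bar x \bar x^T$, giving both tightness and recovery of the true tensor.

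The threshold $t$ appears precisely at the PSD step: the multipliers $\lambda_Q$ grow as $\vartheta$ shrinks (since they must compensate for increasingly disparate weights), and I need $S \succeq 0$ to survive this. I expect the certificate matrix $S$ to decompose, after the propagation substitutions, into a sum of small $2\times 2$ or $4\times 4$ blocks---one per propagation step on the index set $\{\alpha_1,\alpha_2,\alpha_3,\beta\}$---and each such block is PSD exactly when $\vartheta$ lies below an explicit bound determined by the ratio of the involved weights and observed values $\hat T_\alpha$. Taking $t$ to be the minimum of these finitely many per-square thresholds yields the claimed $t \in (0,1)$, and for any $\vartheta \in (0,t)$ every block---hence $S$---is PSD.

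The hard part will be guaranteeing \emph{global} positive semidefiniteness of $S$ while preserving the exact kernel condition $S\boldsymbol{\bar X}=0$. Local per-square PSD-ness is straightforward, but the blocks overlap---a single index may participate in several squares along different propagation branches---so I cannot simply take a direct sum. The main obstacle is showing that the overlapping contributions still assemble into a globally PSD $S$; I anticipate handling this by ordering the squares so that each index's ``incoming'' square (the one producing it) is distinguished from its ``outgoing'' squares, charging the bulk of each index's weight to its incoming square, and using the factor-$\vartheta$ gap to dominate the off-diagonal coupling from outgoing squares---a Gershgorin/Schur-complement style diagonal-dominance argument whose feasibility is exactly what pins down the quantitative threshold $t$.
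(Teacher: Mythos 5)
Your high-level skeleton (dual certificate, complementary slackness at $\boldsymbol{\bar X}$, corank one, threshold $t$ obtained by taking $\vartheta$ small) matches the paper, but two concrete gaps remain. First, the certificate you propose, $S=\diag(w)+\sum_{Q}\lambda_Q E_Q$, cannot satisfy $S\boldsymbol{\bar X}=0$: every square-constraint matrix annihilates $[1;\bar x]$ in the quadratic-form sense, since $[1;\bar x]^T\bigl(\begin{smallmatrix}0&0\\0&E_{\alpha_i,\alpha_j}-E_{\alpha_k,\alpha_\ell}\end{smallmatrix}\bigr)[1;\bar x]=2(\bar x_{\alpha_i}\bar x_{\alpha_j}-\bar x_{\alpha_k}\bar x_{\alpha_\ell})=0$, so $[1;\bar x]^T S[1;\bar x]=\sum_\beta w_\beta\bar x_\beta^2>0$, contradicting $S\succeq0$ with $[1;\bar x]\in\ker S$. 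The dual \eqref{eq:dual_weighted_sdp} also carries the scalar $\rho$ and the multipliers $\nu_\alpha,\lambda_\alpha$ for the constraints $x_\alpha=\hat T_\alpha$ and $X_{\alpha,\alpha}=\hat T_\alpha^2$; these are indispensable for placing $[1;\bar x]$ in the kernel, and your description explicitly restricts the certificate to the weighted trace plus the square multipliers.

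Second, and more seriously, the crux of the theorem --- global positive semidefiniteness of the assembled certificate together with rank exactly $N$ --- is left as an anticipated Gershgorin/diagonal-dominance argument, which is precisely the step you identify as hard and do not carry out. The paper avoids it by working on the polynomial side: \Cref{lem:tight_weighted_sdp} recursively produces, for each $\beta\notin\Omega$, an identity $g_\beta+\sum_{\gamma\in S_\beta}w^{(\beta)}_\gamma g_\gamma=-2c_\beta f_\beta+\tilde p_\beta\bmod(I_2+J_2)$ in which $\tilde p_\beta$ is an \emph{explicit} sum of squares, so PSDness of the resulting Gram matrix is automatic rather than something to be verified by dominating off-diagonal couplings. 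The role of small $\vartheta$ is then a weight-budget argument: under S-propagation the support $S_\beta$ of a depth-$(\ell{+}1)$ element lies in $\cup_{r\le\ell}\tilde B_r$ (the second half of \Cref{lem:tight_weighted_sdp} --- a no-forward-dependency fact you would also need but have not established), so the amount $\vartheta^{\ell+1}w^{(\beta)}_\gamma$ drawn from earlier layers is negligible against their assigned weight of at least $\vartheta^{\ell}$; in particular the elements of $\mathcal{P}_{SR}(\Omega)$ retain weight close to $1$, \Cref{lem:sdp2} supplies rank $|\mathcal{P}_{SR}(\Omega)|$ on that block, and the explicit squares from the propagation steps supply the remaining $N-|\mathcal{P}_{SR}(\Omega)|$ directions. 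Your ``incoming versus outgoing squares'' idea gestures at the same layering, but without the no-forward-dependency property and the explicit SOS bookkeeping, neither the threshold $t$ nor the rank count is actually obtained.
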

\begin{proof}
See \Cref{append:thm:tight_sdp_square}.
\end{proof}

The basic idea of \Cref{alg:weights} is to assign a smaller weight on the elements outside $\mathcal{P}_{SR}(\Omega)$, and keep decreasing the weights as the propagation iterates.

The following example illustrates that \eqref{eq:weighted_sdp} allows to solve the completion problem in an instance in which $\eqref{eq:sdp2}$ is not tight.

\begin{algorithm}[htb]
\caption{Weights generation}\label{alg:weights}
\begin{algorithmic}
\Require $\Omega \subset \NN$ satisfying S-propagation and a constant $\vartheta\in (0,1)$
\Ensure Weights $w_\beta$ for $\beta\in\NN$
\State $\mathcal{P}$ $\gets$  $\mathcal{P}_{SR}(\Omega)$ 
\State $w_\alpha \gets 1,\quad \forall \alpha \in \NN$ 
\While{$|\mathcal{P}| \neq N$}
\State $B \gets \{ \beta\in\NN\setminus \mathcal{P} : \{\alpha_i,\alpha_j;\alpha_k,\beta\} \text{~is a square} ~~~\text{for some}~~~ \alpha_i,\alpha_j,\alpha_k\in \mathcal{P} \}$
\State $w_\beta \gets \vartheta \cdot w_\beta, \quad\forall \beta\in B$
\State $\mathcal{P} \gets \mathcal{P} \cup B$
\EndWhile
\State \Return $w$
\end{algorithmic}
\end{algorithm}

\begin{example}
  Consider the completion problem in \Cref{non_tight_example}.
  \Cref{alg:weights} with $\epsilon \!=\! 0.01$ generates the weights
  $w_{3,4,1} \!=\! 0.01$ and
  $w_\beta \!=\! 1$ for all $\beta \!\neq\! (3,4,1)$.
  It can be checked that \eqref{eq:weighted_sdp} is tight with those weights.
\end{example}

\subsection{Background on Sums of Squares}

Here we recall some notions from Sum-of-Squares (SOS) which will be used in the proof of \Cref{thm:tightsdp2}. 
Let $\RR[x]$ be the ring of polynomials in variables~$x = (x_\alpha: \alpha \in \NN)$
and let $\RR[x]_2 := \{p\in\RR[x]:\deg(p)\leq 2\}$.
A polynomial $f(x) \in \RR[x]$ is SOS
if
\( f(x) = \sum_{i=1}^s (p_i(x))^2 \)
for some polynomials $p_i(x)\in \RR[x]$.
We denote
\[
  \Sigma_{N} := \{p\in \RR[x]: p ~~\text{is SOS}\},\qquad
  \Sigma_{N,2} = \Sigma_{N} \cap \RR[x]_2.
\]
It is known that a polynomial $f(x)\in \Sigma_{N,2}$ if and only if it can be written as
$$f(x) = [1;x]^T Q\ [1;x],$$
for some PSD matrix $Q \in \SS^{N+1}$.
We call $Q$ a \emph{Gram matrix} of~$f(x)$.

We introduce two truncated ideals in $\RR[x]_2$ related to the constraints in \eqref{eq:sdp2}.

First, let $I_2 \subset \RR[x]_2$ be generated by the constraints corresponding to the observed values:
\begin{align*}
  I_2:=\Bigl\{ \sum_{\alpha\in \Omega} &\Big(\nu_\alpha f_\alpha + \lambda_{\alpha}g_\alpha\Big):~
  \lambda_\alpha,\lambda'_\alpha\in \RR \Bigr\},\\
  &\text{ where }
  f_\alpha= x_\alpha - \hat{T}_{\alpha}, \quad
  g_\alpha= x_\alpha^2 - \hat{T}_{\alpha}^2.
\end{align*}
Second, let $J_2\subset \RR[x]_2$ be generated by constraints corresponding to the rank-one structure of the tensor:
\begin{align*}
  J_2 = \Bigl\{ \sum_{\{\alpha_i,\alpha_j;\alpha_k,\alpha_\ell\}\in\mathcal{A}}
    &\mu_{ijk\ell} (x_{\alpha_i} x_{\alpha_j} - x_{\alpha_k} x_{\alpha_\ell}):~
  \mu_{ijk\ell} \in \RR \Bigr\}
\end{align*}

In order to simplify the notation,
we define the following modulus operation.
Given a linear subspace $G \subset \RR[x]_2$ and polynomials $p,\tilde p \in \RR[x]_2$,
we denote
\begin{align*}
  p = \tilde p \bmod G
  &\quad\iff\quad p + g = \tilde p \text{ for some } g \in G\\
  p \in \Sigma_{N,2} \bmod G
  &\quad\iff\quad p + g \in \Sigma_{N,d} \text{ for some } g \in G
\end{align*}

The following proposition gives some simple properties of~$I_2$.

\begin{lemma}\label{lem:sos1}
     Let $h_{\alpha_1,\alpha_2} = x_{\alpha_1}x_{\alpha_2} - T_{\alpha_1} T_{\alpha_2}$, for $\alpha_1,\alpha_2\in \Omega$. Then
    \begin{equation}
        \pm h_{\alpha_1,\alpha_2} \in \Sigma_{N,2} \bmod I_2.
    \end{equation}
\end{lemma}
\begin{proof}
    Consider the following polynomials in $I_2$:
    $$
    q := g_{\alpha_1} + g_{\alpha_2} - 2T_{\alpha_2}f_{\alpha_1} -2T_{\alpha_1} f_{\alpha_2},\quad
    \hat q := g_{\alpha_1} + (T_{\alpha_1}/T_{\alpha_2})^2g_{\alpha_2}
    \quad \in\quad I_2. 
    $$ 
    The lemma follows from
    \[
    2h_{\alpha_1,\alpha_2} + q = (f_{\alpha_1} + f_{\alpha_2})^2,
    \quad
    -h_{\alpha_1,\alpha_2} + \hat q = (f_{\alpha_1} - (T_{\alpha_1}/T_{\alpha_2})f_{\alpha_2})^2.
    \qedhere
    \]
\end{proof}

\subsection{Proof of \Cref{thm:tightsdp2}}

Recall that $T_\beta$ for $\beta \in \NN$ are the entries of the unique completion.
The following lemma gives a sufficient condition for the SDP to be tight.

\begin{lemma}\label{lem:sdp-sufficient}
  Assume that
  \begin{equation}\label{eq:sdp-sufficient}
    \sum_{\beta\in\NN }(x_\beta^2 - T_\beta^2) = \sigma \bmod (I_2 + J_2),
  \end{equation}
  for some $\sigma \in \Sigma_{N,2}$ that admits a Gram matrix $Q \in \SS^{N+1}$ of rank~$N$.
  Then the relaxation \eqref{eq:sdp2} is tight.
\end{lemma}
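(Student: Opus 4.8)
The plan is to prove tightness by extracting, from the hypothesized SOS certificate, a dual multiplier (equivalently, a pseudo-expectation) that simultaneously certifies optimality of $\boldsymbol{\bar X}$ and forces every optimum to be rank one. Concretely, I would attach to an arbitrary feasible point $\boldsymbol{X} = \bigl(\begin{smallmatrix} 1 & x^T \\ x & X \end{smallmatrix}\bigr) \succeq 0$ of \eqref{eq:sdp2} the linear functional $L:\RR[x]_2\to\RR$ given by $L(f)=\langle F_f,\boldsymbol{X}\rangle$, where $F_f\in\SS^{N+1}$ is the Gram matrix of $f$. The structural fact that makes this well defined is that in degree two the Gram map $F\mapsto[1;x]^T F[1;x]$ from $\SS^{N+1}$ to $\RR[x]_2$ is a bijection, since a dimension count gives $\dim\SS^{N+1}=\binom{N+2}{2}=\dim\RR[x]_2$; hence $F_f$ is unique and $L$ is unambiguous. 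Two properties then follow at once: $L(1)=\boldsymbol{X}_{0,0}=1$, and $L$ is nonnegative on $\Sigma_{N,2}$ because $f\in\Sigma_{N,2}$ forces $F_f\succeq 0$, so $\langle F_f,\boldsymbol{X}\rangle\ge 0$. Moreover $L$ annihilates $I_2+J_2$: feasibility of $\boldsymbol{X}$ gives $L(f_\alpha)=\boldsymbol{X}_{0,\alpha}-\hat T_\alpha=0$, $L(g_\alpha)=\boldsymbol{X}_{\alpha,\alpha}-\hat T_\alpha^2=0$ for $\alpha\in\Omega$, and $L(x_{\alpha_i}x_{\alpha_j}-x_{\alpha_k}x_{\alpha_\ell})=\boldsymbol{X}_{\alpha_i,\alpha_j}-\boldsymbol{X}_{\alpha_k,\alpha_\ell}=0$ for each square, so $L$ vanishes on the generators spanning $I_2$ and $J_2$.

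Next I would apply $L$ to the identity \eqref{eq:sdp-sufficient}. Writing $\sum_{\beta}(x_\beta^2-T_\beta^2)=\sigma+p$ with $p\in I_2+J_2$, linearity and $L(p)=0$ give $L\bigl(\sum_{\beta}(x_\beta^2-T_\beta^2)\bigr)=L(\sigma)=\langle Q,\boldsymbol{X}\rangle\ge 0$. The left-hand side equals $\tr(X)-\sum_{\beta}T_\beta^2$, since the Gram matrix of $\sum_\beta x_\beta^2-\sum_\beta T_\beta^2$ is $\diag\bigl(-\sum_\beta T_\beta^2,1,\dots,1\bigr)$ and $\langle\,\cdot\,,\boldsymbol{X}\rangle$ reads off $\tr(X)$ and $\boldsymbol{X}_{0,0}=1$. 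Thus every feasible $\boldsymbol{X}$ satisfies $\tr(X)\ge\sum_\beta T_\beta^2$. Since $\boldsymbol{\bar X}$ is feasible with objective $\tr(\bar x\bar x^T)=\sum_\beta T_\beta^2$, it is optimal and the optimal value is $\sum_\beta T_\beta^2$.

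Finally, for uniqueness and rank one I would invoke complementary slackness. At any optimal $\boldsymbol{X}$ the displayed chain is an equality, so $\langle Q,\boldsymbol{X}\rangle=L(\sigma)=0$; with $Q,\boldsymbol{X}\succeq 0$ this forces $Q\boldsymbol{X}=0$, i.e.\ the column space of $\boldsymbol{X}$ lies in $\ker Q$. Because $\rank Q=N$ and $Q\in\SS^{N+1}$, the kernel $\ker Q$ is one-dimensional, whence $\rank\boldsymbol{X}\le 1$; as $\boldsymbol{X}_{0,0}=1$ we obtain $\boldsymbol{X}=(1;x)(1;x)^T$, so $X=xx^T$. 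The square constraints then make $x$ the vectorization of a genuine rank-one tensor that agrees with $\hat T$ on $\Omega$, and by uniqueness of the completion $x=\bar x$, hence $\boldsymbol{X}=\boldsymbol{\bar X}$. I expect the main obstacle to be precisely this last step: converting the algebraic hypothesis $\rank Q=N$ into the geometric statement that the primal optimum is exactly rank one, which requires combining the complementary-slackness dimension count with the uniqueness of recovery. The other point that must be stated carefully is the well-definedness of $L$, namely the degree-two Gram bijection, since this is what legitimizes evaluating the SOS certificate against the feasible matrix $\boldsymbol{X}$ in the first place.
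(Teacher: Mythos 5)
Your proof is correct and is essentially the paper's argument: the linear functional $L$ you attach to a feasible $\boldsymbol{X}$ is exactly weak duality against the dual matrix of \eqref{eq:dual_sdp} that the paper builds from the same SOS certificate, and both proofs conclude with the identical complementary-slackness plus corank-one step. The only cosmetic difference is at the very end, where you identify the unique optimum via the square constraints and uniqueness of the completion, while the paper uses that $\ker Q$ is spanned by $[1;\bar x]$; both are valid.
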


In order prove the above lemma, we consider the dual problem of \eqref{eq:sdp2}:
\begin{equation} \label{eq:dual_sdp}
\tag{dSDP-E}
\begin{aligned}
  \max_{\lambda,\mu} \quad & \rho\\
  \quad\text{ s.t. }\quad
  &Q:= \begin{pmatrix} 0 & 0 \\ 0 & I_N \end{pmatrix} -\rho \begin{pmatrix} 1 & 0 \\ 0 & 0 \end{pmatrix} -\sum_{\alpha\in \Omega} \frac{\nu_\alpha}{2} \begin{pmatrix} -2\hat T_\alpha & e_\alpha^T \\ e_\alpha & 0 \end{pmatrix} \\
  &- \sum_{\alpha\in \Omega}\lambda_{\alpha}\begin{pmatrix} -\hat T_\alpha^2 & 0 \\ 0 & \diag(e_\alpha) \end{pmatrix} \\
  & -\sum_{\{\alpha_i,\alpha_j;\alpha_k,\alpha_\ell\}\in\mathcal{A}}\frac{\mu_{ijk\ell}}{2}\begin{pmatrix} 0 & 0 \\ 0 & E_{\alpha_i,\alpha_j} - E_{\alpha_k,\alpha_\ell} \end{pmatrix}\succeq 0,
\end{aligned}
\end{equation}
where $e_\alpha\in\RR^N$ is an indicator vector with 1 in the entry corresponding to $\alpha$ and 0's everywhere else, and $E_{\alpha,\beta} = e_\alpha e_\beta^T + e_\beta e_\alpha^T$.

\begin{proof}[Proof of \Cref{lem:sdp-sufficient}]
    Consider the definition of $Q$ in \eqref{eq:dual_sdp}.
    By multiplying by $[1;x]$ on the left and right we obtain:
    \begin{align*}
    [1; x]^T Q [1; x] =& \sum_{\beta \in \NN} x_\alpha^2 - \rho - \sum_{\alpha\in\Omega} (\nu_\alpha f_\alpha - \lambda_\alpha g_\alpha) \\
    &- \sum_{\{\alpha_i,\alpha_j;\alpha_k,\alpha_\ell\}\in \mathcal{A}} \mu_{ijk\ell} (x_{\alpha_i} x_{\alpha_j} - x_{\alpha_k} x_{\alpha_\ell})
    \end{align*}
    The above is very similar to \eqref{eq:sdp-sufficient},
    except that $\rho$ is replaced by $\sum_\beta T_\beta^2$.
    Hence, the assumption of the lemma implies that there exist some $\bar\nu_\alpha$,  $\bar\lambda_{\alpha}$, $\bar\mu_{ijk\ell}$, and $\bar\rho :=\sum_\beta T_\beta^2$ that are feasible for \eqref{eq:dual_sdp} and such that the corresponding matrix $\bar Q$ has rank~$N$.

    Recall that the matrix
    \(
        \boldsymbol{\bar X} :=
        \begin{pmatrix} 1 & \bar x^T\\ \bar x & \bar x\bar x^T \end{pmatrix}
    \)
    given by the unique completion is  feasible for the primal SDP.
    Note that $\bar X, \bar Q$ satisfy complementary slackness:
    \begin{equation}\label{eq: exact_dual_matrix}
        \bar Q \bullet \boldsymbol{\bar X}
        = [1;\bar x]^T \bar Q [1;\bar x]
        = \sigma(\bar x) = \sum_\beta (\bar x_\beta^2 - T_\beta^2) = 0.
    \end{equation}
    Hence, $\bar X, \bar Q$ are primal-dual optimal.
    
    Furthermore,
    any other primal optimal solution $\boldsymbol{X}^*$ satisfies
    $\bar Q \bullet \boldsymbol{X}^* = 0$.
    Since $\rank(\bar{Q}) = N$, then $\rank \boldsymbol{X}^* = 1$ and it must be a multiple of $\boldsymbol{\bar X}$.
    But since the first element is fixed to be $1$,
    then $\boldsymbol{X}^* = \boldsymbol{\bar X}$ is the unique primal solution, and the relaxation is tight.
\end{proof}

We next construct the SOS polynomial from \eqref{eq:sdp-sufficient}.

\begin{lemma}\label{lem:sdp2}
     If $\Omega$ satisfies SR-propagation, given any $\xi_\beta>0$ where $\beta \in \NN$, there exists  $\tilde p, \sigma \in \Sigma_{N,2}$ such that 
    \begin{equation}
       \sum_{\beta\in \NN}\xi_\beta(x_\beta^2 - T_\beta^2) = \sum_{\beta\in \NN}\eta_\beta\Big(x_\beta - \frac{T_\beta}{T_{\alpha_\beta}}x_{\alpha_\beta}\Big)^2 +\tilde p = \sigma\bmod (I_2 + J_2),
    \end{equation}
    where $\eta_\beta \in [\xi_\beta-\eta,\xi_\beta]$ is a positive constant, $\eta>0$ is a small constant
    and the SOS polynomial $\sigma$ has a Gram matrix $\bar Q$ of rank~$N$.  
\end{lemma}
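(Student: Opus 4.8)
The plan is to build the SOS certificate by following the SR-propagation order and telescoping a one-step identity. First I would fix, for every $\beta \in \NN \setminus \Omega$, a witnessing square $\{a_1^\beta, \alpha_\beta; a_2^\beta, \beta\}$ certifying that $\beta$ enters $\mathcal{P}_{SR}(\Omega)$, where the anchors $a_1^\beta, a_2^\beta \in \Omega$ and the parent $\alpha_\beta$ already lies in $\mathcal{P}_{SR}(\Omega)$, hence earlier in a fixed topological order. The square constraint forces $T_{a_1^\beta} T_{\alpha_\beta} = T_{a_2^\beta} T_\beta$ on the unique completion, so $c_\beta := T_\beta/T_{\alpha_\beta} = T_{a_1^\beta}/T_{a_2^\beta}$ and the linear form $L_\beta := x_\beta - c_\beta x_{\alpha_\beta}$ vanishes at $\bar x$. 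For observed indices $\alpha \in \Omega$ no propagation is needed, since the defect is $x_\alpha^2 - T_\alpha^2 = g_\alpha \in I_2$.

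The engine of the proof is the elementary identity
\[
  x_\beta^2 - T_\beta^2 \;=\; L_\beta^2 \;+\; 2 c_\beta\, x_{\alpha_\beta} L_\beta \;+\; c_\beta^2\bigl(x_{\alpha_\beta}^2 - T_{\alpha_\beta}^2\bigr),
\]
which holds because $c_\beta^2 T_{\alpha_\beta}^2 = T_\beta^2$. Summing $\xi_\beta(x_\beta^2 - T_\beta^2)$ over $\NN$ and substituting this identity in decreasing propagation order lets each parent defect be re-expanded until every defect is either observed (and thus in $I_2$) or converted into a square $L_\beta^2$ plus a cross term. Tracking the accumulation of the factors $c_\beta^2$ along the propagation paths produces the coefficients on the squares; the free positive slack $\eta$, together with the perturbation introduced below, is what places them in the stated band $\eta_\beta \in [\xi_\beta-\eta,\xi_\beta]$ while keeping the residual positive semidefinite. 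This reduces the claim to showing that the leftover cross terms $\sum_\beta 2\eta_\beta c_\beta x_{\alpha_\beta} L_\beta$ lie in $\Sigma_{N,2} \bmod (I_2 + J_2)$.

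The hard part is exactly this cross-term absorption, and it is where the degree-two truncation bites: since $I_2$ and $J_2$ are spanned only by constant-coefficient multiples of their generators, one cannot simply substitute $x_{a} \mapsto T_{a}$ inside a product. Instead I would use the square generator $R_\beta = x_{a_1^\beta} x_{\alpha_\beta} - x_{a_2^\beta} x_\beta \in J_2$ to rewrite $x_{a_2^\beta} L_\beta \equiv M_\beta\, x_{\alpha_\beta} \pmod{J_2}$, where $M_\beta = f_{a_1^\beta} - c_\beta f_{a_2^\beta} \in I_2$, and then invoke \Cref{lem:sos1} to express the resulting products of observed variables as SOS modulo $I_2$. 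Because exact cancellation is unavailable at degree two, I would not try to match the weights $\xi_\beta$ on the nose; instead I perturb them to $\eta_\beta \in [\xi_\beta - \eta, \xi_\beta]$ with $\eta$ small, absorbing the residue into $\tilde p$, and argue by a diagonal-dominance/continuity estimate that the remaining quadratic form is positive semidefinite (hence SOS) once $\eta > 0$. The nonzero slack is precisely what buys the strict positivity needed to dominate the cross terms.

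Finally, I would verify the rank condition on the Gram matrix of $\sigma = \sum_\beta \eta_\beta L_\beta^2 + \tilde p$. The $N$ linear forms $\{L_\beta\}_{\beta \in \NN \setminus \Omega} \cup \{f_\alpha\}_{\alpha \in \Omega}$ each carry a distinct leading variable (namely $x_\beta$ or $x_\alpha$), so their $x$-parts are triangular with unit diagonal and the forms are linearly independent in the dual of $\RR^{N+1}$; moreover all of them vanish at $[1;\bar x]$, since $L_\beta(\bar x)=0$ and $f_\alpha(1,\bar x)=\bar x_\alpha - T_\alpha = 0$. Writing $\bar Q$ as a positive combination of the outer products of these forms together with the remaining SOS squares — which vanish at $[1;\bar x]$ as well, because $\sigma(\bar x)=0$ — forces $\ker \bar Q = \operatorname{span}\{[1;\bar x]\}$; since all $N$ forms appear with positive weight, $\rank \bar Q = N$, as required. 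I expect the cross-term absorption of the third paragraph to be the genuine obstacle, whereas the telescoping and the rank count are bookkeeping.
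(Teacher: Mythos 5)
Your overall architecture --- telescoping along the propagation order, converting each defect $g_\beta = x_\beta^2 - T_\beta^2$ into the square of a linear form vanishing at $[1;\bar x]$, and a triangularity argument for the rank --- matches the paper's, and your rank count at the end is fine. The genuine gap is exactly where you predicted it: the cross-term absorption does not go through as proposed. Your engine identity pairs $x_\beta$ with its \emph{parent} $x_{\alpha_\beta}$, so the cross term $2c_\beta x_{\alpha_\beta}L_\beta = 2c_\beta(x_{\alpha_\beta}x_\beta - c_\beta x_{\alpha_\beta}^2)$ is built from products of \emph{adjacent} corners of the witnessing square $\{a_1^\beta,\alpha_\beta;a_2^\beta,\beta\}$, both of which may be unobserved. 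The generator of $J_2$ coming from that square only equates the products of \emph{opposite} corners, $x_{a_1^\beta}x_{\alpha_\beta}$ and $x_{a_2^\beta}x_\beta$, so it says nothing about $x_{\alpha_\beta}x_\beta$ or $x_{\alpha_\beta}^2$; and the truncated ideal $I_2$ contains only constant multiples of $f_\alpha,g_\alpha$ for $\alpha\in\Omega$. The identity you do establish, $x_{a_2^\beta}L_\beta \equiv M_\beta x_{\alpha_\beta} \pmod{J_2}$, concerns a different product (with the observed anchor, not with the parent), and even its right-hand side $M_\beta x_{\alpha_\beta}$ is a product of an element of $I_2$ with a possibly unobserved variable, which is neither in the truncated $I_2$ nor covered by \Cref{lem:sos1} (that lemma needs both indices in $\Omega$). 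The perturbation fallback cannot rescue this: since the left-hand side and everything in $I_2+J_2$ vanish at $[1;\bar x]$, every square appearing in $\sigma$ must vanish there too, so you may not complete the square with a multiple of $x_{\alpha_\beta}$ alone, and the negative diagonal contribution $-2c_\beta^2 x_{\alpha_\beta}^2$ of the cross term has no legal positive mass to absorb it. (A secondary issue: your telescoping forces each parent to inherit weight $\xi_\beta c_\beta^2$ from each child, so the resulting coefficients exceed $\xi_\gamma$ rather than lying in $[\xi_\gamma-\eta,\xi_\gamma]$.)

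The repair is to exploit the defining feature of SR-propagation that your setup records but does not use: each new $\beta$ sits \emph{opposite} an observed anchor in its witnessing square. Writing that square as $\{\alpha_0,\beta;\tilde\alpha_1,\beta^{(1)}\}$ with $\alpha_0,\tilde\alpha_1\in\Omega$ and $\beta^{(1)}$ the parent, set $L_\beta := f_\beta - t_{\beta,\alpha_0}f_{\alpha_0} = x_\beta - (T_\beta/T_{\alpha_0})x_{\alpha_0}$. Then $g_\beta + t_{\beta,\alpha_0}^2 g_{\alpha_0} - 2t_{\beta,\alpha_0}h_{\beta,\alpha_0} = L_\beta^2$ with $g_{\alpha_0}\in I_2$, and the cross term $h_{\beta,\alpha_0}$ is now a product of opposite corners, so $J_2$ converts it to $h_{\beta^{(1)},\tilde\alpha_1}$, i.e.\ the parent times \emph{its} observed anchor. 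Recursing down the propagation chain pushes the cross term to $h_{\beta^{(m_\beta)},\tilde\alpha_{m_\beta}}$ with both indices in $\Omega$, where \Cref{lem:sos1} finally applies; this is the content of \Cref{lem:propagation_sequence_induction}. The small free coefficients $\delta_i$ attached to the intermediate $g_{\beta^{(i)}}$ along each chain are what allow the accumulated extra weight $\varrho_\beta$ on each node to be made arbitrarily small, which is how the paper obtains $\eta_\beta\in[\xi_\beta-\eta,\xi_\beta]$.
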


\begin{proof}
    See \Cref{append:lem:sdp2}.
\end{proof}

By letting $\xi_\beta=1$ for any $\beta\in\NN$, \Cref{thm:tightsdp2} is a direct consequence of \Cref{lem:sdp-sufficient,lem:sdp2}.

\section{SDP relaxation for noisy completion}\label{section:sdp-n}

In this section we introduce our SDP relaxation for noisy rank-one tensor completion.
Our main result of the section is \Cref{thm:sdpp_stable}, 
which shows that the relaxation is able to construct all the entries of the rank-one tensor up to an error $O(\sqrt{\|\epsilon\|_\infty})$. This section is divided into two parts.
First, we introduce our relaxations and state \Cref{thm:sdpp_stable}.
The second part shows the proof of \Cref{thm:sdpp_stable}.

\subsection{Relaxation and main theorem}
In the noisy case the observed values are perturbed by a noise vector $\epsilon \in \RR^\Omega$, i.e.,
\[
T^\epsilon_\alpha = \hat{T}_\alpha + \epsilon_\alpha,\quad\forall \alpha\in \Omega,
\]
where $\{\hat{T}_\alpha\}_{\alpha\in \Omega}$ is the set of true values.
In this case we cannot rely on \eqref{eq:sdp2},
as it is typically infeasible.
Instead,
we turn some of the constraints into penalty terms in the objective,
obtaining the following SDP:
\begin{equation} \label{eq:sdp-p}
\tag{SDP-N}
\begin{aligned}
  \min_{x\in \RR^{N}, X \in \SS^N} \quad & \tr(X) + C\sum_{\alpha\in \Omega} \Big(X_{\alpha,\alpha} -2T^\epsilon_\alpha x_\alpha + ( T^\epsilon_\alpha)^2\Big) \\
  \quad\text{ s.t. }\quad
  \quad&X_{\alpha_1,\alpha_2} = X_{\alpha_3,\alpha_4}, \quad \forall \{\alpha_1,\alpha_2;\alpha_3,\alpha_4\} \in \mathcal{A},\\
  &\boldsymbol{X}:=\begin{pmatrix} 1 & x^T \\ x & X \end{pmatrix} \succeq 0
\end{aligned}
\end{equation}
where $C>0$ is a large constant. And its dual problem is, 
\begin{equation}\label{eq:sdp-p-d}
\tag{dSDP-N}
\begin{aligned}
  \max_{\rho_c^\epsilon, \mu} \quad & \rho^\epsilon_c\\
  \quad\text{ s.t. }\quad
  &Q^\epsilon_c:= \begin{pmatrix} 0 & 0 \\ 0 & I_N \end{pmatrix} -\rho^\epsilon_c \begin{pmatrix} 1 & 0 \\ 0 & 0 \end{pmatrix} +C\sum_{\alpha\in\Omega}A_\alpha\\
  & -\sum_{\{\alpha_i,\alpha_j;\alpha_k,\alpha_\ell\}\in\mathcal{A}}\frac{\mu^\epsilon_{ijk\ell}}{2}\begin{pmatrix} 0 & 0 \\ 0 & E_{\alpha_i,\alpha_j} - E_{\alpha_k,\alpha_\ell} \end{pmatrix}\succeq 0,
\end{aligned}
\end{equation}
where $A_\alpha = [-T^\epsilon_\alpha; e_\alpha][- T^\epsilon_\alpha; e_\alpha]^T$.

Recall that $\bar x$ is the vector given by the unique completion of the noiseless observations, and that
\(
    \boldsymbol{\bar X} :=
    \bigl(\begin{smallmatrix} 1 & \bar x^T\\ \bar x & \bar x\bar x^T \end{smallmatrix}\bigr).
\)
Our main result of the section, stated next,
shows that the distance of the minimizer of \eqref{eq:sdp-p} and $\boldsymbol{\bar X}$ is upper bounded by $O(\sqrt{\|\epsilon\|_\infty})$.
\begin{theorem}\label{thm:sdpp_stable}
     If the SR-propagation  is satisfied, then the optimal solution $\boldsymbol{X^\epsilon_c}$
    of \eqref{eq:sdp-p} obeys
     \begin{equation}\label{eq:noisy_stable}
       \|\boldsymbol{\bar X} - \boldsymbol{ X^\epsilon_c}\|_F \leq O\Big(\chi\big(\|\epsilon\|_\infty + \|\epsilon\|_\infty^2C + 1/C\big)\Big).
     \end{equation}
     where $\chi$ is a map: $\RR\rightarrow \RR,  x\rightarrow x+\sqrt{x}$, and $O(\cdot)$ is applied with respect to $\|\epsilon\|_\infty$ and $C$.
     In particular, for $C = 1/\|\epsilon\|_\infty$ the upper bound is $O(\sqrt{\|\epsilon\|_\infty})$.
\end{theorem}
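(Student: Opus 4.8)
**The plan is to leverage the exact-recovery machinery already developed for \eqref{eq:sdp2}, reinterpreting the noisy SDP as a perturbation of the exact one and building an approximately-feasible dual certificate.** The starting point is \Cref{lem:sdp2}: under SR-propagation there exists an SOS polynomial $\sigma$ with a rank-$N$ Gram matrix $\bar Q$ certifying tightness of the noiseless problem. For the noisy case I would first construct a feasible dual matrix $Q^\epsilon_c$ for \eqref{eq:sdp-p-d} by perturbing $\bar Q$: the penalty terms $A_\alpha = [-T^\epsilon_\alpha; e_\alpha][-T^\epsilon_\alpha; e_\alpha]^T$ play the role that the observation-constraint multipliers $\nu_\alpha,\lambda_\alpha$ played in \eqref{eq:dual_sdp}, but now with the noisy values $T^\epsilon_\alpha$ in place of $\hat T_\alpha$. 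Since $T^\epsilon_\alpha = \hat T_\alpha + \epsilon_\alpha$, the resulting dual slack matrix differs from $\bar Q$ by a term of order $\|\epsilon\|_\infty$ (from the linear mismatch) plus a term of order $C\|\epsilon\|_\infty^2$ (from the quadratic penalty), so for an appropriate choice of the square-multipliers $\mu^\epsilon$ one obtains $Q^\epsilon_c \succeq 0$ with a smallest eigenvalue that is still bounded away from zero on the complement of the near-kernel.

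**The second ingredient is a quantitative complementary-slackness / eigenvalue-gap argument.** Because $\bar Q$ has rank exactly $N$, its kernel is the one-dimensional span of $[1;\bar x]$, and there is a spectral gap $\delta>0$ between the zero eigenvalue and the rest of the spectrum. I would show that this gap persists for $Q^\epsilon_c$ up to an $O(\|\epsilon\|_\infty + C\|\epsilon\|_\infty^2)$ shift. Evaluating the primal objective of \eqref{eq:sdp-p} at the true lift $\boldsymbol{\bar X}$ gives a value that exceeds the dual optimum $\rho^\epsilon_c$ by only $O(\|\epsilon\|_\infty^2 C)$ (the penalty of the true point is exactly $C\sum_\alpha \epsilon_\alpha^2$), which bounds the duality gap. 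Combining the gap with weak duality in the standard way —writing $\langle Q^\epsilon_c, \boldsymbol{X^\epsilon_c}\rangle = (\text{primal value}) - \rho^\epsilon_c$ and lower-bounding the left side by $\delta$ times the energy of $\boldsymbol{X^\epsilon_c}$ orthogonal to $[1;\bar x]$— yields
\begin{equation*}
  \delta \cdot \operatorname{dist}\!\big(\boldsymbol{X^\epsilon_c},\, \mathrm{span}[1;\bar x]\big)^2 \;\leq\; O\big(\|\epsilon\|_\infty + C\|\epsilon\|_\infty^2 + 1/C\big),
\end{equation*}
where the $1/C$ contribution tracks the slack in the unpenalized trace part.

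**The final step is to translate this "near-rank-one, near-correct-direction" conclusion into the Frobenius bound on $\boldsymbol{\bar X} - \boldsymbol{X^\epsilon_c}$.** Once $\boldsymbol{X^\epsilon_c}$ is shown to be $O(\sqrt{\cdot})$-close to the rank-one cone in the direction $[1;\bar x]$, I would use the normalization constraint (the top-left entry fixed to $1$) to pin down the scale, so that the leading rank-one part is forced close to $\boldsymbol{\bar X}$ itself; the residual PSD part has bounded trace and hence bounded Frobenius norm. The square root in $\chi(x)=x+\sqrt{x}$ enters precisely here: a bound on a squared distance (energy) passes to a bound on the distance via a square root, which is why the error scales like $\sqrt{\|\epsilon\|_\infty}$ rather than $\|\epsilon\|_\infty$ after optimizing $C = 1/\|\epsilon\|_\infty$.

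**I expect the main obstacle to be the quantitative spectral-gap estimate for the perturbed dual matrix $Q^\epsilon_c$.** In the exact case one only needs $\operatorname{rank}(\bar Q)=N$, a qualitative statement; here I must control how the gap $\delta$ degrades under the $\epsilon$- and $C$-dependent perturbation, and ensure the perturbation does not create spurious small eigenvalues that would blow up the denominator. Making the dependence on $C$ explicit — so that the three competing terms $\|\epsilon\|_\infty$, $C\|\epsilon\|_\infty^2$, and $1/C$ balance correctly at $C=1/\|\epsilon\|_\infty$ — requires carefully tracking constants through \Cref{lem:sdp2}, in particular verifying that the Gram-matrix construction there admits a uniform lower bound on $\delta$ independent of the noise.
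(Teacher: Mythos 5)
Your proposal follows essentially the same route as the paper: build a dual certificate for \eqref{eq:sdp-p-d} out of the noiseless certificate $\bar Q$ from \Cref{lem:sdp2}, bound the duality gap by evaluating the penalized objective at $\boldsymbol{\bar X}$ (whose penalty is $C\sum_\alpha\epsilon_\alpha^2=O(\|\epsilon\|_\infty^2C)$, with the $\|\epsilon\|_\infty$ and $1/C$ terms coming from the certificate construction), use a spectral gap to control the energy of $\boldsymbol{X_c^\epsilon}$ off the kernel direction $[1;\bar x]$, and recover the scale from the fixed $(1,1)$ entry; the $\sqrt{\cdot}$ indeed enters through the cross term (Cauchy--Schwarz on $U_2$ in the paper's notation).

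The one place where you diverge --- and where your sketch currently has a hole --- is the spectral gap of the perturbed dual matrix $Q^\epsilon_c$, which you flag as the main obstacle. The paper never proves such a perturbed gap. Instead, \Cref{lem:dual_bound} constructs the noisy certificate so that $\bar Q^\epsilon_c = \bar Q + (\text{Gram matrix of an extra SOS term }\tilde p^\epsilon)$, hence $\bar Q^\epsilon_c\succeq\bar Q$. This monotonicity gives $\bar Q\bullet\boldsymbol{X_c^\epsilon}\leq\bar Q^\epsilon_c\bullet\boldsymbol{X_c^\epsilon}\leq O(\|\epsilon\|_\infty+\|\epsilon\|_\infty^2C+1/C)$, so the entire spectral argument runs with the \emph{fixed, noise-independent} second eigenvalue $\bar\tau$ of $\bar Q$; no eigenvalue-perturbation analysis is needed, and no spurious small eigenvalues can arise. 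If you pursue your version, you would need to prove that the perturbation does not shrink the gap, which is exactly the hard estimate the paper's construction is designed to avoid; I would recommend adopting the $\bar Q^\epsilon_c\succeq\bar Q$ device (i.e., arranging the noisy SOS decomposition as the noiseless one plus an additional SOS summand, as in \Cref{lem:poly_error}) rather than attempting the perturbed-gap bound directly.
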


\subsection{Proof of \Cref{thm:sdpp_stable}}

We start with some preliminary lemmas.
Firstly, we give a useful property satisfied by all  polynomials in~$I_2$.

\begin{lemma}\label{lem:poly_error}
    For any $\alpha\in\Omega$, let  $f^\epsilon_\alpha = x_\alpha - T^\epsilon_\alpha$.
    Given $C>0$ and a polynomial $p\in I_2$,
    there exists $\tilde p^\epsilon\in\Sigma_{N,2}$ such that
    \begin{equation}\label{eq:poly_error}
        p  + C\sum_{\alpha\in\Omega}(f^\epsilon_\alpha)^2 = \tilde p^\epsilon + O(1/C) + O(\|\epsilon\|_\infty).
    \end{equation}
    Moreover, plugging in $x_\alpha = \hat T_\alpha$ for $\alpha\in\Omega$ in the polynomial $\tilde p^\epsilon$ gives a scalar of magnitude $O\big(\|\epsilon\|_\infty + \|\epsilon\|_\infty^2C + 1/C\big)$.
\end{lemma}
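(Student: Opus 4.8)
The statement asks me to show that any polynomial $p \in I_2$, after adding the penalty term $C\sum_{\alpha\in\Omega}(f^\epsilon_\alpha)^2$, equals an SOS polynomial $\tilde p^\epsilon$ up to additive errors controlled by $\|\epsilon\|_\infty$ and $1/C$. The key structural fact is that $I_2$ is generated by the noiseless constraint polynomials $f_\alpha = x_\alpha - \hat T_\alpha$ and $g_\alpha = x_\alpha^2 - \hat T_\alpha^2$, whereas the penalty uses the \emph{noisy} residuals $f^\epsilon_\alpha = x_\alpha - T^\epsilon_\alpha = f_\alpha - \epsilon_\alpha$. So the heart of the argument is to rewrite everything in terms of the $f^\epsilon_\alpha$ and track the discrepancy $\epsilon_\alpha$ as lower-order error.

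\medskip

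The plan is as follows. First I would take an arbitrary $p \in I_2$, so $p = \sum_{\alpha\in\Omega}(\nu_\alpha f_\alpha + \lambda_\alpha g_\alpha)$ for some scalars. I would substitute $f_\alpha = f^\epsilon_\alpha + \epsilon_\alpha$ and expand $g_\alpha = x_\alpha^2 - \hat T_\alpha^2 = (f^\epsilon_\alpha + T^\epsilon_\alpha)^2 - \hat T_\alpha^2$, collecting all terms proportional to $\epsilon_\alpha$ (which is $O(\|\epsilon\|_\infty)$) separately. This expresses $p$ as a linear-plus-quadratic expression in the $f^\epsilon_\alpha$ together with an $O(\|\epsilon\|_\infty)$ remainder. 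The second step is the standard completion-of-squares trick already used in \Cref{lem:sos1}: a term of the form $\nu f^\epsilon_\alpha$ (degree one) combined with the penalty $C(f^\epsilon_\alpha)^2$ can be absorbed, since
\[
  C(f^\epsilon_\alpha)^2 + \nu f^\epsilon_\alpha
  = C\Bigl(f^\epsilon_\alpha + \tfrac{\nu}{2C}\Bigr)^2 - \tfrac{\nu^2}{4C},
\]
which is an SOS term plus an $O(1/C)$ scalar. The quadratic terms $(f^\epsilon_\alpha)^2$ arising from $g_\alpha$ are already SOS and get folded into $\tilde p^\epsilon$ directly.

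\medskip

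Carrying this out, I would group the penalty mass $C\sum_\alpha (f^\epsilon_\alpha)^2$ so that each linear residual coming from $\nu_\alpha$ (and from the cross terms in the expansion of $g_\alpha$, which also contribute multiples of $f^\epsilon_\alpha$) is dominated by the corresponding square; this is where $C$ being large matters, as the leftover scalars are of order $1/C$. The resulting $\tilde p^\epsilon$ is a sum of squares of the shifted linear forms plus the genuine $(f^\epsilon_\alpha)^2$ terms, hence lies in $\Sigma_{N,2}$, and the accumulated non-SOS part is $O(1/C) + O(\|\epsilon\|_\infty)$, giving \eqref{eq:poly_error}. For the final ``moreover'' claim I would evaluate $\tilde p^\epsilon$ at $x_\alpha = \hat T_\alpha$: at this point each $f^\epsilon_\alpha = \hat T_\alpha - T^\epsilon_\alpha = -\epsilon_\alpha$, so every shifted square becomes $C(-\epsilon_\alpha + \tfrac{\nu}{2C})^2 = O(\|\epsilon\|_\infty^2 C) + O(\|\epsilon\|_\infty) + O(1/C)$, and summing over the finitely many $\alpha\in\Omega$ yields the stated magnitude $O(\|\epsilon\|_\infty + \|\epsilon\|_\infty^2 C + 1/C)$.

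\medskip

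The main obstacle I anticipate is bookkeeping rather than conceptual: making sure that the cross terms generated by expanding $g_\alpha$ in the noisy variable do not produce uncontrolled quadratic contributions, and that every degree-one piece (from both $\nu_\alpha f_\alpha$ and the expansion of $g_\alpha$) can indeed be matched against sufficient penalty mass $C(f^\epsilon_\alpha)^2$ to complete a square. One must also verify that the coefficients $\nu_\alpha, \lambda_\alpha$ of the fixed polynomial $p$ are treated as constants independent of $\epsilon$ and $C$, so that factors like $\nu_\alpha^2/(4C)$ are genuinely $O(1/C)$ and $\lambda_\alpha \epsilon_\alpha$-type terms are genuinely $O(\|\epsilon\|_\infty)$; since $\Omega$ is finite this is immediate once the dependence is made explicit.
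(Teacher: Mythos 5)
Your proposal is correct and follows essentially the same route as the paper's proof: both rewrite $p$ in terms of the noisy residuals $f^\epsilon_\alpha$ (incurring an $O(\|\epsilon\|_\infty)$ shift), absorb the resulting linear terms into the penalty $C\sum_\alpha (f^\epsilon_\alpha)^2$ by completing the square (leaving $O(1/C)$ scalars), and then evaluate at $x_\alpha=\hat T_\alpha$, where $f^\epsilon_\alpha=-\epsilon_\alpha$, to get the final bound. The only point worth making explicit in your write-up is that the completed squares carry coefficients $C+\lambda_\alpha$, so $C$ must dominate the (fixed) coefficients of $p$ for $\tilde p^\epsilon$ to genuinely lie in $\Sigma_{N,2}$, which is consistent with the asymptotic reading of the $O(\cdot)$ statements.
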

\begin{proof}
   See \cref{append:lem:poly_error}
\end{proof}

The next lemma shows the existence of a feasible dual matrix such that its inner product with the optimal solution in \eqref{eq:sdp-p} is bounded.

\begin{lemma}\label{lem:dual_bound}
    Let $\boldsymbol{\bar X},\boldsymbol{X_c^\epsilon}$ be the optimal solutions of \eqref{eq:sdp2} and \eqref{eq:sdp-p}, and $\bar Q$ be the Gram matrix from \Cref{lem:sdp2}.
    There exists a dual feasible $\bar Q^\epsilon_c$ for \eqref{eq:sdp-p-d}
    such that
    \begin{align}\label{eq:dual_bound}
    \bar Q_c^\epsilon \bullet \boldsymbol{\bar X} \leq O(\|\epsilon\|_\infty+\|\epsilon\|_\infty^2C+1/C), \quad
    \bar Q_c^\epsilon\bullet \boldsymbol{X_c^\epsilon} \leq O(\|\epsilon\|_\infty + \|\epsilon\|_\infty^2C + 1/C).
    \end{align}
    In addition, we have that $\bar Q_c^\epsilon \succeq \bar Q$.
\end{lemma}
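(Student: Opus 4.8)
The plan is to construct the dual feasible matrix $\bar Q_c^\epsilon$ explicitly by starting from the exact-case Gram matrix $\bar Q$ produced in \Cref{lem:sdp2} and correcting it so that it becomes feasible for the noisy dual \eqref{eq:sdp-p-d}. Recall that $\bar Q$ certifies the SOS identity $\sum_\beta(x_\beta^2 - T_\beta^2) = \sigma \bmod (I_2 + J_2)$ with $\sigma$ having a rank-$N$ Gram matrix. The essential difference between \eqref{eq:dual_sdp} and \eqref{eq:sdp-p-d} is that the observation constraints $f_\alpha, g_\alpha$ (with multipliers $\nu_\alpha, \lambda_\alpha$) are no longer present as ideal generators; instead they appear only through the penalty blocks $C\sum_\alpha A_\alpha$, where $A_\alpha = [-T^\epsilon_\alpha; e_\alpha][-T^\epsilon_\alpha; e_\alpha]^T$. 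The key observation is that $[1;x]^T A_\alpha [1;x] = (x_\alpha - T^\epsilon_\alpha)^2 = (f_\alpha^\epsilon)^2$, so the penalty terms realize exactly the squared-residual polynomials appearing in \Cref{lem:poly_error}.

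First I would write down the polynomial identity certified by $\bar Q$ in the exact case, namely that $\sum_\beta(x_\beta^2 - T_\beta^2)$ minus the $I_2$-part and the $J_2$-part equals an SOS $\sigma$. I would isolate the $I_2$-contribution, call it $p \in I_2$, which carries the multipliers $\nu_\alpha, \lambda_\alpha$ of the exact dual. The idea is then to replace this $I_2$-part by the penalized squared residuals $C\sum_\alpha (f_\alpha^\epsilon)^2$ using \Cref{lem:poly_error}: that lemma guarantees the existence of $\tilde p^\epsilon \in \Sigma_{N,2}$ with $p + C\sum_\alpha(f_\alpha^\epsilon)^2 = \tilde p^\epsilon + O(1/C) + O(\|\epsilon\|_\infty)$. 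Substituting this into the exact identity converts the $I_2$-certificate into a penalty-based certificate whose Gram matrix is feasible for \eqref{eq:sdp-p-d}, with the dual objective value $\rho_c^\epsilon$ differing from the noiseless $\bar\rho = \sum_\beta T_\beta^2$ by the error terms $O(\|\epsilon\|_\infty + \|\epsilon\|_\infty^2 C + 1/C)$. This defines $\bar Q_c^\epsilon$ as the Gram matrix of the resulting SOS polynomial $\sigma + \tilde p^\epsilon$ (modulo $J_2$), which is manifestly PSD.

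Next I would verify the two inner-product bounds in \eqref{eq:dual_bound}. For $\bar Q_c^\epsilon \bullet \boldsymbol{\bar X}$, I evaluate $[1;\bar x]^T \bar Q_c^\epsilon [1;\bar x]$ by substituting the exact completion $\bar x$ into the certified polynomial; since $\bar x$ satisfies the noiseless equations exactly, the $J_2$-terms and the leading $\sum_\beta(\bar x_\beta^2 - T_\beta^2)$ vanish, leaving precisely the scalar obtained by plugging $x_\alpha = \hat T_\alpha$ into $\tilde p^\epsilon$ plus the $O(1/C)+O(\|\epsilon\|_\infty)$ remainder. The ``moreover'' clause of \Cref{lem:poly_error} bounds this evaluation by $O(\|\epsilon\|_\infty + \|\epsilon\|_\infty^2 C + 1/C)$, giving the first inequality. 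For the second bound, $\bar Q_c^\epsilon \bullet \boldsymbol{X_c^\epsilon}$, I would use weak duality together with the fact that $\boldsymbol{X_c^\epsilon}$ is the \emph{optimal} primal solution: the complementary-slackness pairing is controlled by the gap between the primal optimal value of \eqref{eq:sdp-p} (evaluated at $\boldsymbol{X_c^\epsilon}$) and the dual objective $\rho_c^\epsilon$ of the feasible $\bar Q_c^\epsilon$, and since $\boldsymbol{\bar X}$ is primal feasible for \eqref{eq:sdp-p} its objective value provides an upper bound on the primal optimum that is again $\bar\rho + O(\|\epsilon\|_\infty + \|\epsilon\|_\infty^2 C + 1/C)$; subtracting $\rho_c^\epsilon$ yields the second bound.

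The last assertion $\bar Q_c^\epsilon \succeq \bar Q$ is where I expect the main obstacle, since it is not a mere feasibility statement but a Loewner-order comparison between the noisy and exact dual matrices. I would argue that the difference $\bar Q_c^\epsilon - \bar Q$ is the Gram matrix of $\tilde p^\epsilon$ together with the penalty block $C\sum_\alpha A_\alpha$ minus the $I_2$-generator block that $\bar Q$ absorbed; the delicate point is that although $\tilde p^\epsilon$ is SOS (hence its Gram part is PSD), the subtraction of the $\nu_\alpha, \lambda_\alpha$ contributions could in principle break positivity. The resolution is that \Cref{lem:poly_error} is proved by an explicit completion-of-squares that expresses $p + C\sum_\alpha(f_\alpha^\epsilon)^2$ as a sum of squares \emph{plus} only scalar (degree-zero) error, so the quadratic and linear parts are entirely accounted for by the PSD block $C\sum_\alpha A_\alpha$ and the square terms of $\tilde p^\epsilon$; consequently $\bar Q_c^\epsilon - \bar Q$ equals a genuine PSD Gram matrix up to the scalar corner entry, and I would check that the scalar correction enters only the $(1,1)$ position in a way that preserves $\succeq$. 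I would keep careful track of which error terms land in the corner versus the body of the matrix, as that bookkeeping is the crux of establishing the Loewner inequality rather than just the scalar bounds.
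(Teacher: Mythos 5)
Your proposal follows essentially the same route as the paper's proof: you construct $\bar Q_c^\epsilon$ as the Gram matrix of $\sigma+\tilde p^\epsilon$ by swapping the $I_2$-certificate of \Cref{lem:sdp2} for the penalty-based certificate of \Cref{lem:poly_error}, obtain the first bound by evaluating at $[1;\bar x]$, the second by comparing duality gaps via primal optimality of $\boldsymbol{X_c^\epsilon}$, and the Loewner inequality from $\bar Q_c^\epsilon=\bar Q+\mathrm{Gram}(\tilde p^\epsilon)$ with the scalar errors absorbed into $\rho_c^\epsilon$. This matches the paper's argument in all essentials, including the resolution of your own concern about the $(1,1)$ corner entry.
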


\begin{proof}
    See \Cref{append:lem:dual_bound}.
\end{proof}

We are ready to prove the stability of \eqref{eq:sdp-p} under the noise.

\begin{proof}[Proof of \Cref{thm:sdpp_stable}]
    Let
    \(
    \boldsymbol{X_c^\epsilon} = \sum_{i=1}^N (\lambda_c)_i u_iu_i^T
    \)
    be its the eigenvalue decomposition.
    From \Cref{lem:dual_bound}, we have
    \begin{equation}\label{eq1}
         \bar{Q} \bullet \boldsymbol{X_c^\epsilon}\leq \bar Q^\epsilon_c \bullet \boldsymbol{X_c^\epsilon} \leq O(\|\epsilon\|_\infty + \|\epsilon\|_\infty^2C + 1/C),
    \end{equation}
    Let $\phi: \RR^N \longmapsto\ker(\bar{Q})$ be the projection to $\ker(\bar{Q})$, and $\phi^\perp: \RR^n \longmapsto\ker(\bar{Q})^\perp$ be the projection to $\ker(\bar{Q})^\perp$.
    From \Cref{lem:sdp2}, it is true that $\rank(\bar Q) = N$. Let $\bar\tau>0$ be the the second smallest eigenvalue of $\bar Q$. Then it follows that
    
    \begin{equation}\label{eq:lower_bd_thm6}
    \begin{split}
         \bar Q\bullet \boldsymbol{X_c^\epsilon}=\sum_{i=1}^N(\lambda_c)_iu_i^T \bar{Q} u_i
         =& \sum_{i=1}^N (\lambda_c)_i\phi^\perp(u_i)^T \bar{Q} \phi^\perp(u_i)\\
       =&\sum_{i=1}^N (\lambda_c)_i\|\phi^\perp(u_i)\|^2\Bigg(\frac{ \phi^\perp(u_i)^T \bar{Q} \phi^\perp(u_i)}{\|\phi^\perp(u_i)\|^2}\Bigg)\\
       \geq & \bar \tau \sum_{i=1}^N(\lambda_c)_i\|\phi^\perp(u_i)\|^2
    \end{split}
    \end{equation}
    
    Combine \eqref{eq1} and \eqref{eq:lower_bd_thm6}, it can be shown that
    \begin{equation}\label{ineq:1}
    \sum_{i=1}^N(\lambda_c)_i\|\phi^\perp(u_i)\|_2^2 \leq \frac{\bar Q\bullet \boldsymbol{X^\epsilon_c}}{\bar \tau}\leq O(\|\epsilon\|_\infty + \|\epsilon\|_\infty^2C + 1/C).
    \end{equation}
    Denoting $u_0 = [1;\bar x]$, then we have
    \begin{equation}\label{eq:14}
        \begin{split}
         \boldsymbol{X_c^\epsilon}
         &= \sum_{i=1}^N (\lambda_c)_i (\phi(u_i) + \phi^\perp (u_i))(\phi(u_i) + \phi^\perp (u_i))^T
         = U_1 + U_2 + U_3
        \end{split}
    \end{equation}
    where 
    \begin{align}
        \label{eq:U1}
        U_1 :=& \big(\sum_{i=1}^N (\lambda_c)_i\big) \phi(u_i)\phi(u_i)^T,
        \\
        \label{eq:U2} U_2 :=& \sum_{i=1}^N (\lambda_c)_i (\phi(u_i) \phi^\perp (u_i)^T + \phi^\perp (u_i)\phi(u_i)^T),\\
        \label{eq:U3} U_3 :=&\sum_{i=1}^N (\lambda_c)_i \phi^\perp (u_i)\phi^\perp (u_i)^T.
    \end{align}
    Since $\boldsymbol{X_c^\epsilon}$ is the optimal solution of \eqref{eq:sdp-p}, then
    \begin{equation}\label{eq:opt_inequality}
    \tr\big(\boldsymbol{X_c^\epsilon}\big) + C\sum_{\alpha\in\Omega}(A_\alpha \bullet \boldsymbol{X_c^\epsilon}) \leq \tr(\boldsymbol{\bar X}) + C\sum_{\alpha\in\Omega} (A_\alpha \bullet \boldsymbol{\bar X}).    
    \end{equation}
    
    From the Proof of \Cref{lem:poly_error} (\Cref{append:lem:poly_error}), plugging $x_\alpha =\hat T_\alpha$ into $f^\epsilon_\alpha , \forall \alpha\in\Omega$, we have 
    \begin{equation}\label{eq:plug_in}
          C\sum_{\alpha\in\Omega} (A_\alpha \bullet \boldsymbol{\bar X}) =C\sum_{\alpha\in\Omega}(f^\epsilon_\alpha)^2 = O(\|\epsilon\|_\infty^2C)
    \end{equation}
    where the first equality is derived from the definition of $f^\epsilon_\alpha$ in \eqref{eq:poly_error} and $A_\alpha$ in \eqref{eq:sdp-p-d}, and the second equality follows from \eqref{eq:11}.
    Combining \eqref{eq:opt_inequality} and \eqref{eq:plug_in}, it follows
    \begin{equation}\label{eq: eigvals_bound}
        \begin{split}
         \sum_{i=1}^N (\lambda_c)_i = \tr(\boldsymbol{X_c^\epsilon}) \leq 
         \tr\big(\boldsymbol{X_c^\epsilon}\big) + C\sum_{\alpha\in\Omega}(A_\alpha \bullet \boldsymbol{X_c^\epsilon})
         \leq \tr(\boldsymbol{\bar X}) + O(\|\epsilon\|_\infty^2C).    
        \end{split}
    \end{equation}
    Then we derive that
    \begin{equation}\label{ineq:u2}
    \begin{split}
        \|U_2\|_F
        \leq& \sum_{i=1}^N 2(\lambda_c)_i \big\|\phi^\perp(u_i)\big\|_2\\
        \leq&2 \Big(\sum_{i=1}^N (\lambda_c)_i\Big)^{1/2}
        \Big(\sum_{i=1}^N (\lambda_c)_i\|\phi^\perp(u_i)\|_2^2\Big)^{1/2}\\
        \leq& \Big(\sqrt{\tr(\boldsymbol{\bar X})+O(\|\epsilon\|^2_\infty C)}\Big)O\Big(\sqrt{\|\epsilon\|_\infty + \|\epsilon\|_\infty^2C + 1/C}\Big)\\
        \leq & O\Big(\sqrt{\|\epsilon\|_\infty + \|\epsilon\|_\infty^2C + 1/C} + \big(\|\epsilon\|_\infty + \|\epsilon\|_\infty^2C + 1/C\big)\Big)
    \end{split}
    \end{equation}
    where the first inequality is from $\|\phi(u_i)\|_2\leq 1$ for $i=1,\dots,N$, the second inequality is derived from Cauchy–Schwarz inequality and $\|\phi(u_i)\|_2\leq 1$ for $i=1,\dots,N$, and the third inequality is from \eqref{ineq:1} and \eqref{eq: eigvals_bound}.
    
    Observe now that
    \begin{equation}\label{ineq:u3}
        \begin{split}
            \|U_3\|_F
            \leq &\sum_{i=1}^N (\lambda_c)_i \big\|\phi^\perp (u_i)\big\|_2^2
            \leq O\Big(\|\epsilon\|_\infty+\|\epsilon\|_\infty^2C + 1/C\Big).
        \end{split}
    \end{equation}
    Then from \eqref{ineq:u2} and \eqref{ineq:u3}, it follows that 
    \begin{equation}\label{ineq:2}
        |(U_2)_{1,1}|+|(U_3)_{1,1}|\leq O\Big(\chi\big(\|\epsilon\|_\infty+\|\epsilon\|_\infty^2C + 1/C\big)\Big)
    \end{equation}
    
    Since $\ker \bar Q$ is spanned by $u_0$, there exists $\tilde\lambda>0$ such that $U_1 = \tilde\lambda u_0u_0^T$.
    Recall that
    \( \boldsymbol{\bar X} = u_0u_0^T.\)
    Then we have
    \[
    (\boldsymbol{\bar X})_{1,1} =1= (\boldsymbol{X_c^\epsilon})_{1,1} = (U_1)_{1,1} + (U_2)_{1,1} + (U_3)_{1,1},
    \]
    where $(U_1)_{1,1} = \tilde\lambda(u_0u_0^T)_{1,1} = \tilde \lambda.$ Together with \eqref{ineq:2}, we have
    \begin{equation}\label{ineq:eig}
    1 - \tilde\lambda = (U_2)_{1,1} + (U_3)_{1,1}\leq O\Big(\chi\big(\|\epsilon\|_\infty+\|\epsilon\|_\infty^2C + 1/C\big)\Big).
    \end{equation}
    Using \eqref{ineq:eig}, \eqref{ineq:u2} and \eqref{ineq:u3}, we obtain
     \begin{equation}
         \begin{split}
             \|\boldsymbol{\bar X} - \boldsymbol{X_c^\epsilon}\|_F &= \|\boldsymbol{\bar X} - (U_1+U_2+U_3)\|_F\\
             &\leq\|(1 - \tilde\lambda) u_0 u_0^T\|_F 
             + \|U_2\|_F + \|U_3\|_F\\&\leq O\Big(\chi\big(\|\epsilon\|_\infty+\|\epsilon\|_\infty^2C + 1/C\big)\Big). \qedhere
         \end{split}
     \end{equation}
\end{proof}

\section{Experiments}

We proceed to discuss the computational performance of our SDP relaxations.
In \Cref{experiments:exact} and \Cref{experiments:noisy_small}, we focus on exact and noisy completion problems for small-scale tensors
using \texttt{Mosek} as the SDP solver,
and we compare our results with the sum-of-squares method in \cite{potechin2017exact}, iterative hard thresholding (IHT) via HOSVD, nuclear norm minimization via matrix unfolding and alternating minimization.
In \Cref{experiments:noisy_med} we approach medium-size tensors using a custom SDP solver based on library \texttt{NLopt}\cite{johnson2014nlopt},
and we compare our results with IHT and alternating minimization. In \Cref{experiments:application}, we propose a low-rank tensor completion approach based on \ref{eq:sdp-p} and then apply our method to image inpainting problem.
Our custom solver is presented in Appendix~\ref{experiments:custom_solver},
inspired by ideas from \cite{cosse2021stable}.
All the methods in this section are coded in \texttt{Julia}~1.8 and performed on a 2020 Macbook Pro with 8-core CPU and 16GB of memory.

\subsection{Exact completion: small-scale}\label{experiments:exact}
We generate uniformly random observation masks $\Omega$ of different cardinalities,
but we only keep masks which satisfy unique recovery.
First, we illustrate how the different propagation conditions behave when the number of observations is the the minimum that is required for unique recovery, namely $|\Omega| = n - d + 1$. We generate uniformly random observation masks $\Omega$ of different cardinalities,
but we only keep masks which satisfy unique recovery. \Cref{tab:exp_conditions} shows the number of masks $\Omega$ satisfying each of the propagation conditions for different choices of~$\NN$.
Notice that the A-propagation condition, used in \cite{cosse2021stable}, is almost never satisfied in this minimal setting.

\begin{table}[htb]
    \centering
    \caption{Percentage of $\Omega$ satisfying each of the propagation conditions, using the minimum number of observations.}
    \label{tab:exp_conditions}
    \begin{tabular}{ p{3cm}p{2.5cm}p{2.5cm}p{3.5cm} }
    \toprule
     & $[5]{\times}[5]{\times}[5]$ $|\Omega|=13$& $[3]{\times}[3]{\times}[3]{\times}[3]$ $|\Omega|=9$& $[2]{\times}[2]{\times}[2]{\times}[2]{\times}[2]$ $|\Omega|=6$\\
    \midrule
    Unique Recovery& 100\% &100\% &100\% \\
    GS-propagation & 100\% &100\% &100\% \\
    S-propagation  & 96\% &95\% &96\% \\
    SR-propagation  & 23\% &51\%&93\% \\
    A-propagation  & 0\% &0\% &1\% \\
    \bottomrule
    \end{tabular}
\end{table}

Next, we generate random rank-one tensors
\( T = u^{(1)}\otimes\dots\otimes u^{(d)}, \)
where each vector $u^{(i)}$ has i.i.d.~entries from the uniform distribution $U(0.1, 1)$.
We compare our \eqref{eq:sdp2} and its variant \eqref{eq:weighted_sdp}
against standard alternating minimization.
We also compare against the sum-of-squares method with degree~4 in \cite{potechin2017exact}, though this is only applicable for tensors of order~3.
For \eqref{eq:weighted_sdp} we use \Cref{alg:weights} to find the weights, using the parameter $\vartheta = 0.1$.

\Cref{fig:exact_rec} shows the percentage of experiments that are successfully solved (all entries are recovered exactly) by each of the methods with different number of observations.
Notice that \eqref{eq:sdp2} is consistently better than both alternating minimization and the method from \cite{potechin2017exact},
requiring significantly less observations to achieve exact recovery.
Regarding the weighted variant \eqref{eq:weighted_sdp},
it futher improves the recovery chances,
e.g., for $\NN = [5]\times[5]\times[5]$, $|\Omega|=13$ the percentage improves from 75 to 84,
and for $\NN = [3]\times[3]\times[3]\times [3]$, $|\Omega|=9$ it improves from 88 to 95.
We did not consider the weighted variant for $\NN = [2]\times[2]\times[2]\times[2]\times[2]$ since the recovery percentage of \eqref{eq:sdp2} is almost~100.

\begin{figure}[htb]
    \centering
    \subfigure[3-rd order: $5\times 5\times 5$]{
    \includegraphics[width=0.4\textwidth]{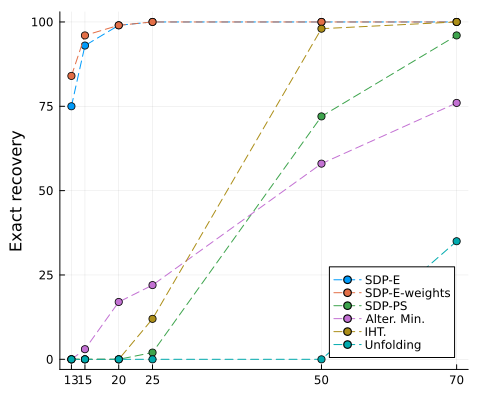}}
    \subfigure[4-th order: $3\times 3\times 3 \times 3$]{\includegraphics[width=0.4\textwidth]{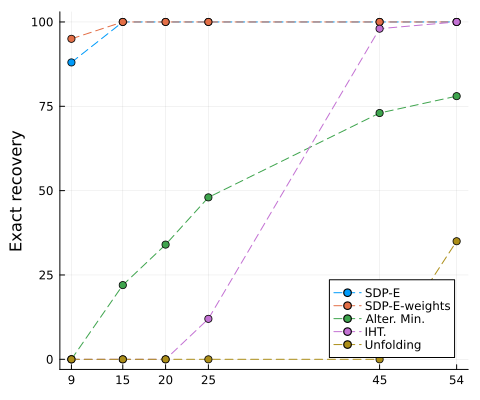}}
    \subfigure[5-th order: $2\times 2\times 2 \times 2 \times 2$]{
    \includegraphics[width=0.4\textwidth]{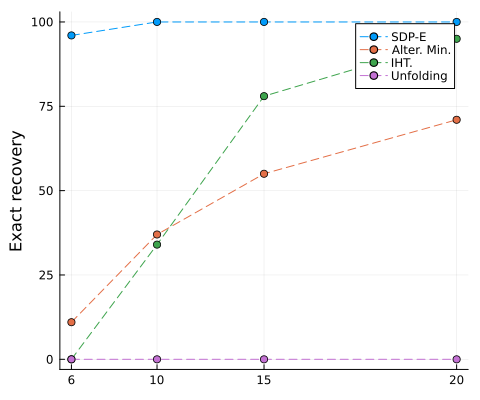}}
    \caption{Percentage of random completion problems that are recovered exactly by the methods: \eqref{eq:sdp2}, \eqref{eq:weighted_sdp}, SDP Potechin-Steurer~\cite{potechin2017exact}, alternating minimization.
    The $x$ axes is the number of observations used.}
    \label{fig:exact_rec}
\end{figure}

\subsection{Noisy completion: small-scale}\label{experiments:noisy_small}

We generate random rank-1 tensors, 
\( T = u^{(1)}\otimes\dots\otimes u^{(d)}, \)
where each vector $u^{(i)}$ has i.i.d.~entries from the uniform distribution $U(0.5, 1)$.
We then add i.i.d.\ random numbers $\epsilon_\alpha\sim\mathcal{N}(0,\delta^2)$ to each tensor entry, where $\delta$ is the noise level.
We also generate uniformly random observation masks $\Omega$ of different cardinalities.
We compare \ref{eq:sdp-p} against standard alternating minimization,
and also with with the degree~4 SDP of~\cite{potechin2017exact} for order 3 tensors.
We use a penalty constant $C = 100$ in all experiments.
The performance metric that we evaluate is the relative distance from the recovered rank-one tensors to the original tensor (before adding the noise).

\Cref{fig:small_dist} illustrates the relative distance achieved by each method for various numbers of observations and noise levels.
Notably, \eqref{eq:sdp-p} consistently yields better solutions than both the alternating minimization approach and the SDP method proposed in \cite{potechin2017exact}, across all examined settings.
In particular, the recovery of \eqref{eq:sdp-p} is orders of magnitude better than that of alternating minimization.
The benefits of \eqref{eq:sdp-p} are especially notable when the number of observations is limited.

\begin{figure}[htb]
    \centering
    \subfigure[$\NN = {[5]^3}, \delta=0.1$ ]{
    \includegraphics[width=0.3\textwidth]{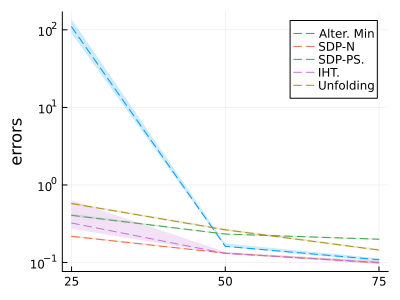}}
    \subfigure[$\NN = {[5]^3}, \delta=0.2$]{
    \includegraphics[width=0.3\textwidth]{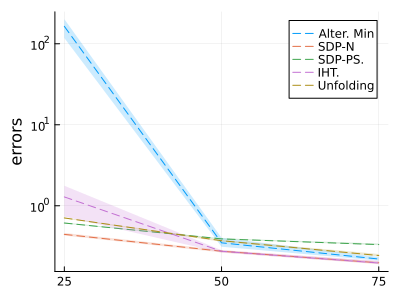}}
    \subfigure[$\NN = {[5]^3}, \delta=0.3$]{
    \includegraphics[width=0.3\textwidth]{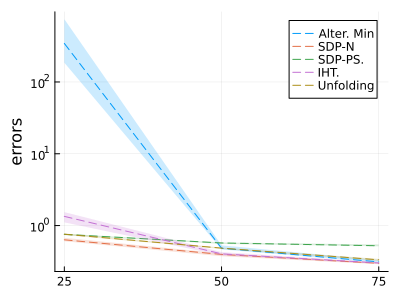}}

    \subfigure[$\NN = {[3]^4}, \delta=0.1$]{
    \includegraphics[width=0.3\textwidth]{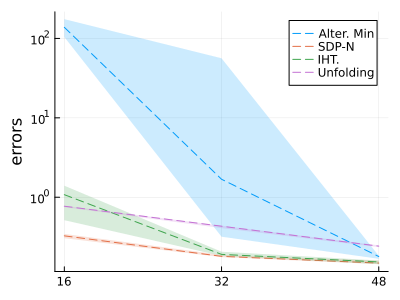}}
    \subfigure[$\NN = {[3]^4}, \delta=0.2$]{
    \includegraphics[width=0.3\textwidth]{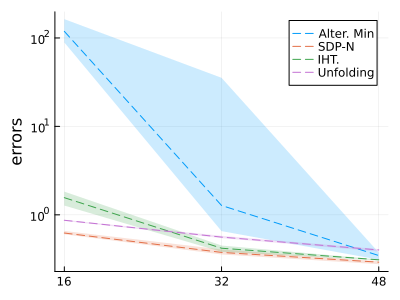}}
    \subfigure[$\NN = {[3]^4}, \delta=0.3$]{
    \includegraphics[width=0.3\textwidth]{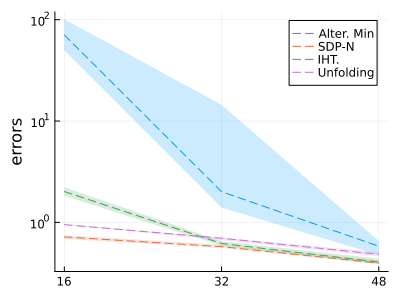}}

    \subfigure[$\NN = {[2]^5}, \delta=0.1$]{
    \includegraphics[width=0.3\textwidth]{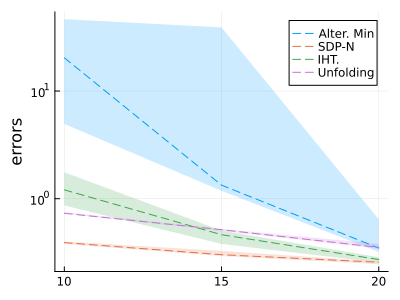}}
    \subfigure[$\NN = {[2]^5}, \delta=0.2$]{
    \includegraphics[width=0.3\textwidth]{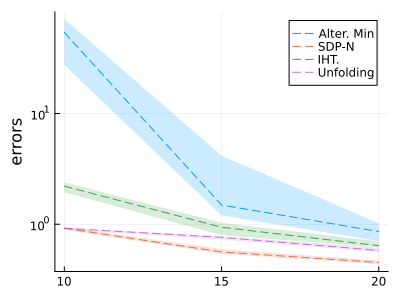}}
    \subfigure[$\NN = {[2]^5}, \delta=0.3$]{
    \includegraphics[width=0.3\textwidth]{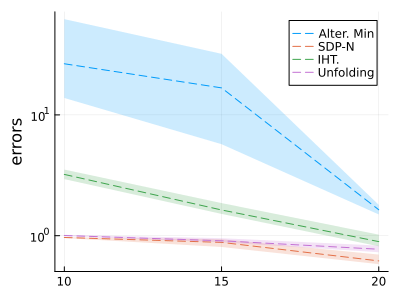}}

    \caption{Relative Distance (with 45\% - 55\% quantile) obtained by the methods: \eqref{eq:sdp-p}, SDP Potechin-Steurer~\cite{potechin2017exact}, alternating minimization.
    The $x$ axes is the number of observations used. }
    \label{fig:small_dist}
\end{figure}

\subsection{Noisy completion: medium-scale}\label{experiments:noisy_med} 

We generate the original tensor, the noisy tensor, and the observation mask in the same way as in \Cref{experiments:noisy_small},
but we now consider a larger tensors,
of dimension $30 \times 30 \times 30$.
Problem \eqref{eq:sdp-p} involves $7.3 \times 10^8$ scalar variables and $2.6 \times 10^8$ constraints,
so it is not possible to solve it with standard methods.
We use a custom SDP solver, which is presented in Appendix~\ref{experiments:custom_solver}.
Our custom solver is able to handle this large SDP,
though it is not as accurate as interior-point solvers such as Mosek.
We compare our method against standard alternating minimization.
We do not compare against the method in \cite{potechin2017exact}, since the SDP is also too large for standard solvers.
The metric we consider is the relative distance from the recovered rank-one tensor to the original tensor.

\Cref{fig:n30_dists} summarizes the results obtained.
With a limited number of observations, our method achieves substantially better recovery of the rank one tensor compared to alternating minimization,
and the difference can be of several orders of magnitude.
When the number of observations is much larger, alternating minimization exhibits a slight performance advantage over our method,
likely due to the fact that our custom SDP solver is not as accurate as interior-point methods.

\begin{figure}[htb]
    \centering
    \subfigure[$\NN = {[30]^3}, \delta=0.1$]{
    \includegraphics[width=0.3\textwidth]{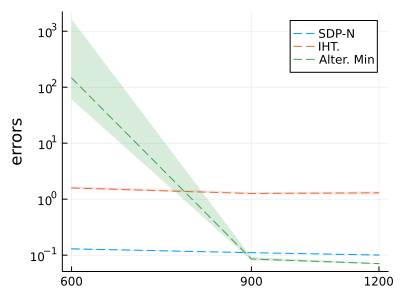}}
    \subfigure[$\NN = {[30]^3}, \delta=0.2$]{
   
    \includegraphics[width=0.3\textwidth]{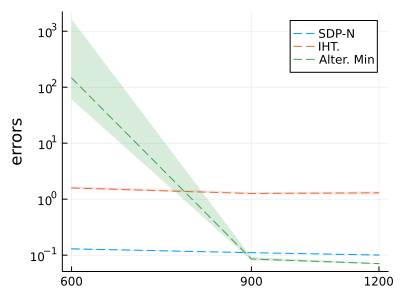}}
    \subfigure[$\NN = {[30]^3}, \delta=0.3$]{
    
    \includegraphics[width=0.3\textwidth]{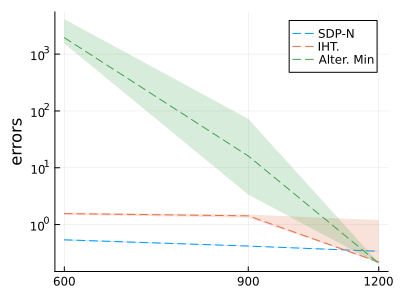}}

     \caption{Relative Distance (with 45\% - 55\% quantile) obtained by the methods: \eqref{eq:sdp-p}, SDP Potechin-Steurer~\cite{potechin2017exact}, alternating minimization.
    The $x$ axes is the number of observations used. }
    \label{fig:n30_dists}
\end{figure}

\subsection{Beyond rank-one tensor completion}\label{experiments:application}

We propose Algorithm~\ref{alg:low-rank} to extend our approach from rank-one to low-rank tensor completion. The method iteratively extracts rank-one components by solving a sequence of rank-one completion problems on the current residual; after each step, the residual is updated by subtracting the reconstructed component.

We applied the algorithm on an image inpainting problem. In \Cref{fig:img_uniform}, the missing entries are generated from uniform randomness. In \Cref{fig:img_rectangle}, a rectangular area is masked. We used \Cref{alg:low-rank} to recover the original image and the results are shown in \Cref{fig:img_rec_uniform} and \Cref{fig:img_rec_rectangle}.

\begin{algorithm}
\caption{Low-rank tensor completion via \ref{eq:sdp-p}}\label{alg:low-rank}
\begin{algorithmic}[1]
  \Require Observation index set $\Omega$, observed entries $\hat T_{\Omega}$, target rank $r$
  \Ensure An estimate $\tilde T$ of rank at most $r$
  \State $\tilde T \gets 0$ \Comment{current reconstruction}
  \State $R^{(0)}_{\Omega} \gets \hat T_{\Omega}$ \Comment{residual on observed set}
  \For{$k = 1,\dots,r$}
    \State $T_{k} \gets \mathrm{SDP\text{-}N}(\Omega, R^{(k-1)}_{\Omega})$
      \Comment{rank-one completion}
    \State $\tilde T \gets \tilde T + T_{k}$
    \State $R^{k}_{\Omega} \gets \hat T_{\Omega} - (\tilde T)_{\Omega}$ \Comment{update residual on $\Omega$}
  \EndFor
  \State \textbf{return} $\tilde T$
\end{algorithmic}
\end{algorithm}

\begin{figure}[htb]
    \centering
    \subfigure[Original image]{
    \includegraphics[width=0.3\textwidth]{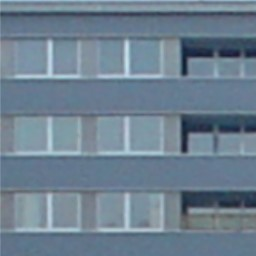}}
    \subfigure[Observed image]{\includegraphics[width=0.3\textwidth]{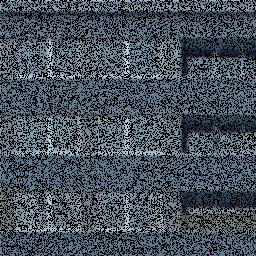}}
    \subfigure[Recovered image by SDP (rank-3 approximation with relative distance: 0.050)]{
    \includegraphics[width=0.3\textwidth]{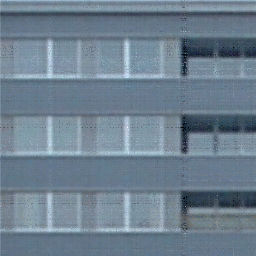}
    \label{fig:img_rec_uniform}
    }
    
    \caption{Application to image inpainting: $\NN = {256\times 256\times 3}$ }
    \label{fig:img_uniform}
\end{figure}

\begin{figure}[htb]\label{fig:img_rectangle}
    \centering
    \subfigure[Original image]{
    \includegraphics[width=0.3\textwidth]{figures/window2.png}}
    \subfigure[Observed image]{\includegraphics[width=0.3\textwidth]{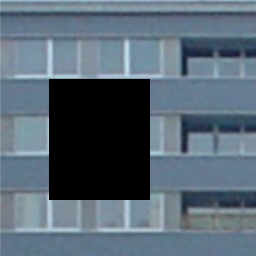}}
    \subfigure[Recovered image by SDP (rank-3 approximation with relative distance: 0.055)]{
    \includegraphics[width=0.3\textwidth]{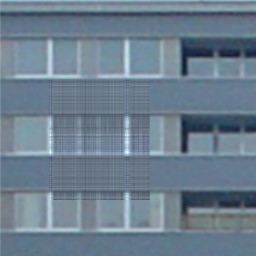}
    \label{fig:img_rec_rectangle}
     }
    
    \caption{Application to image inpainting: $\NN = {256\times 256\times 3}$ }
    \label{fig:img_rectangle}
\end{figure}

\bibliographystyle{abbrv}
\bibliography{refs}

\begin{appendices}
\section{Additional proofs}\label{Appendix}

\subsection{Proof of \Cref{thm:condition_equivalence}: A-propagation $\implies$ SR-propagation}\label{append:prop:AS-US}

\begin{proof}
    Let $\Omega$ satisfy A-propagation.
    We will show by induction that for each $s \in [d+1, n]$
    there exist $S_1\subseteq [n_1], \dots, S_d \subseteq [n_d]$ with $\sum_i |S_i| = s$,
    such that the grid $S_1\times\dots\times S_d$ is contained in $\mathcal{P}_{SR}(\Omega)$.
    In particular, $s = n$ leads to $\mathcal{P}_{SR}(\Omega) = \NN$,
    concluding the proof.
    For the base case, $s = d+1$,
    consider two tuples $\alpha, \alpha' \in \Omega$ that share all but one coordinate (which exist by A-propagation).
    Then $\{\alpha, \alpha'\}$ is a grid of dimensions $1,1,\dots,1,2$
    which is contained in $\mathcal{P}_{SR}(\Omega)$.
    It remains to show the inductive step.
    
    Assume the induction hypothesis holds for some $s< n$:
    there are $S_1,\dots,S_d$ with $\sum_i |S_i| = s$ such that
    the grid $G := S_1\times \dots\times S_d$
    is contained in $\mathcal{P}_{SR}(\Omega)$.
    Let $\Omega_1 = \Omega \cap G$ and $\Omega_2 = \Omega \setminus G$.
    By A-propagation,
    there exists $\alpha_1 \in \Omega_1$ and $\alpha_{2} \in \Omega_2$ that agree in all but one coordinate.
    So there is an index $j \in [d]$ such that
    $(\alpha_2)_i = (\alpha_1)_i \in S_i$ for $i \in [d] \setminus \{j\}$
    and $(\alpha_{2})_j \in [n_j] \setminus S_j$.
    Let $S_j' := S_j \cup \{(\alpha_2)_j\}$
    and consider the new grid
    the grid $G' := S_1\times \dots \times S_j' \times \dots\times S_d$.
    Let us show that $G' \subseteq \mathcal{P}_{SR}(\Omega)$.
    Let $\gamma = (i_1,\dots,(\alpha_2)_j,\dots,i_d) \in G'$.
    Denoting $\beta = (i_1,\dots,(\alpha_1)_j,\dots,i_d) \in G \subset \mathcal{P}_{SR}(\Omega)$,
    then $\{\alpha_{2}, \beta;\alpha_{1}, \gamma\}$ is a square,
    and hence $\gamma \in \mathcal{P}_{SR}(\Omega)$.
    It follows that $G' \subseteq \mathcal{P}_{SR}(\Omega)$, so the induction hypothesis holds for $s+1$ as well.
\end{proof}

\subsection{Proof of \Cref{general_hp}}\label{append:general_hp}

We will consider the special case $d=2$ before proceeding to the general case.

\begin{lemma}\label{lem_nonsym_mat}
    Let $d = 2$ and $3\leq n_1 \leq n_2$.
    Let $\Omega \subset \NN$ be a random subset,
    where each $(i_1, i_2)$ lies in $\Omega$, independently, with probability $p$, where
    \begin{align*}
         p\geq \frac{C}{\sqrt{n_1 n_2}} \log(n_2)
    \end{align*}
   for some constant $C\geq 2$. Then $\Omega$ satisfies unique recovery
   with probability at least $1-\varepsilon$, where  
   \[
   \varepsilon = 2\max \big\{n_2^{-C^2\log(n_2)/4}, n_2^{-(C-1)\sqrt{n_2/n_1}}\big\}
   \]
\end{lemma}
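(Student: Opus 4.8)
The plan is to convert the algebraic criterion of \Cref{prop:rank_uniqueness} into a statement about a random graph and then run a first-moment (union-bound) argument. By \Cref{matrix_condition_equivalence}, for $d=2$ the mask $\Omega$ satisfies unique recovery if and only if the bipartite graph $G$ on vertex set $U \sqcup W$, with $U = [n_1]$, $W = [n_2]$, and an edge joining $i \in U$ and $j \in W$ exactly when $(i,j) \in \Omega$, is connected. Under the sampling model $G$ is precisely the random bipartite graph in which each of the $n_1 n_2$ possible edges is present independently with probability $p$. Hence it suffices to bound by $\varepsilon$ the probability that this random bipartite graph is \emph{disconnected}.

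I would express disconnection as the existence of a nonempty proper vertex set $S = A \cup B$, with $A \subseteq U$, $B \subseteq W$, having no edges to its complement. Writing $a = |A|$ and $b = |B|$, the number of potential crossing edges is $E(a,b) = a(n_2 - b) + (n_1 - a)b$, and all of them must be absent, an event of probability $(1-p)^{E(a,b)} \le e^{-p\,E(a,b)}$. A union bound then gives
\[
\PP[G \text{ disconnected}] \;\le\; \sum_{(a,b)} \binom{n_1}{a}\binom{n_2}{b}\, e^{-p\,E(a,b)},
\]
the sum ranging over $0 \le a \le n_1$, $0 \le b \le n_2$ with $(a,b) \notin \{(0,0),(n_1,n_2)\}$. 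Since $E(a,b) = E(n_1 - a, n_2 - b)$, I may restrict to $a \le n_1/2$ at the cost of a factor $2$ (this is the factor $2$ in $\varepsilon$); splitting further on whether $b \le n_2/2$ gives $E(a,b) \ge \tfrac12(a n_2 + b n_1)$ in the first case and $E(a,b) \ge n_1 n_2/4$ in the second, so the cuts with $b > n_2/2$ are doubly-exponentially unlikely and negligible.

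The remaining estimate splits into two regimes, one per term of $\varepsilon$. The boundary terms with $a \ge 1$, $b = 0$ correspond to isolated $U$-vertices; bounding $\binom{n_1}{a} \le n_1^a$ and using $p n_2 \ge C\sqrt{n_2/n_1}\,\log n_2$ makes the summand a geometric series with ratio $n_1 e^{-p n_2} \le n_2^{\,1 - C\sqrt{n_2/n_1}} \le n_2^{-(C-1)\sqrt{n_2/n_1}}$, which reproduces the second term. The interior terms with $a,b \ge 1$ require balancing the entropy factors $\binom{n_1}{a}\binom{n_2}{b}$ against $e^{-pE}$; bounding the binomials and optimizing the exponent over the cut sizes $(a,b)$ (equivalently, exploiting $p^2 n_1 n_2 = C^2 \log^2 n_2$) is expected to yield the first term $n_2^{-C^2 \log(n_2)/4}$. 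The hypothesis $C \ge 2$ enters to force the coefficient of $a$ in each exponent to be strictly negative, so that all the geometric series converge.

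The step I expect to be most delicate is the asymmetry $n_1 \le n_2$. The $W$-side vertices have the \emph{smaller} expected degree $p n_1 = C\sqrt{n_1/n_2}\,\log n_2$, so the cuts that isolate $W$-vertices or small $W$-subsets are the binding constraint and absorb the full strength of the assumptions on $p$ and $C$; this is where one must be most careful in matching the failure probability to the claimed $\varepsilon$. Moreover, because $E(a,b)$ is not monotone in $(a,b)$ — it has a saddle at $(n_1/2, n_2/2)$ — one cannot simply bound every cut by the cheapest one, and the optimization of the entropy-versus-probability trade-off across all admissible $(a,b)$ is the technical heart of the proof that pins down both exponents.
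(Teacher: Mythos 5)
Your reduction to connectivity of the random bipartite graph and the first-moment bound over cuts is the same basic strategy as the paper's, though you enumerate all vertex cuts $(A,B)$ of $U\sqcup W$ in the textbook way, whereas the paper enumerates only cuts of the $n_2$-side (bounding the probability that no $n_1$-vertex straddles a cut by $(1-p^2)^{n_1k(n_2-k)}$) and treats uncovered $n_1$-vertices separately. The difficulty is that your sketch defers exactly the estimate that cannot be closed, namely the cuts with $A=\emptyset$ and $B$ small. Take $|B|=1$: this is the event that a single column receives no observation, and its contribution to your union bound is $n_2(1-p)^{n_1}\approx n_2^{\,1-C\sqrt{n_1/n_2}}$, since the expected degree of a $W$-vertex is only $pn_1=C\sqrt{n_1/n_2}\,\log n_2$. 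When $n_1\lesssim n_2/C^2$ this quantity is not small (it tends to infinity), while the claimed $\varepsilon$ tends to $0$. No optimization of the entropy-versus-probability trade-off can repair this, because the statement itself fails in that regime: for $n_1 < n_2/(2C\log n_2)^2$ one has $\mathbb{E}|\Omega| = C\sqrt{n_1n_2}\,\log n_2 < n_2/2 < n_1+n_2-1$, so with high probability $|\Omega|$ falls below the information-theoretic minimum, some column is entirely unobserved, and unique recovery fails (e.g.\ $n_1=3$, $n_2=10^6$, $C=2$). You correctly identified the small-$W$-subset cuts as ``the binding constraint,'' but carrying the computation through shows that they break the bound rather than merely requiring care; an additional balancedness hypothesis on the order of $n_1\gtrsim n_2/C^2$ is needed before either your argument or the paper's can succeed. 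The same issue infects your claim that cuts with $b>n_2/2$ are negligible: $e^{-pn_1n_2/4}=e^{-C\sqrt{n_1n_2}\log(n_2)/4}$ beats the $2^{n_1+n_2}$ entropy factor only when $n_1\gtrsim n_2/\log^2 n_2$.

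Two further points of comparison. First, the paper's proof passes over this obstruction because its per-cut bound $(1-p^2)^{n_1k(n_2-k)}$ is not a valid upper bound on the probability that a fixed cut of $V_2$ is disconnecting: for $k=1$ that cut is already disconnecting whenever the singled-out vertex is isolated, an event of probability $(1-p)^{n_1}\gg(1-p^2)^{n_1(n_2-1)}$ once $pn_2>1$. So your more literal bookkeeping is exposing a real defect rather than missing a trick. Second, even in the balanced regime $n_1\asymp n_2$ (where the lemma is correct and your approach would go through), your interior terms $a,b\ge 1$ will not reproduce the specific first term $n_2^{-C^2\log(n_2)/4}$, whose $\log^2 n_2$ exponent comes from the paper's $p^2$-based estimate via $p^2n_1n_2=C^2\log^2 n_2$; your exponents are linear in $pE(a,b)$ and yield only $n_2^{-\Theta(C)}$. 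That is harmless for the final conclusion there, since $\varepsilon$ is a maximum dominated by the isolated-vertex term $n_2^{-(C-1)}$ when $n_1\asymp n_2$, but the constants in the statement would have to be rederived for your decomposition.
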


\begin{proof}
  We use the bipartite graph characterization of unique recovery from \Cref{matrix_condition_equivalence}.
  Hence, consider a bipartite graph with vertices $V = V_1 \cup V_2$,
  where each edge is realized with probability $p_1 p_2$. Consider cuts on $V_2$ into two sets of cardinalities $k$ and $n_2-k$.
  A cut on $V_2$ is \textit{disconnecting} if no vertex in $V_1$ is connected to both parts of the cut in~$V_2$.
  The log of the probability that there is a disconnecting cut of $V_2$ of cardinalities $k, n-k$ can be bounded by
  \begin{align*}
    \log\Big({n_2\choose k}(1-p_1^2p_2^2)^{n_1k(n_2-k)}\Big)
    \leq& \log\big( e^{-p_1^2p_2^2n_1k(n_2-k)}n_2^k\big)\\
    \leq& - C^2\log^2(n_2)k(1-k/n_2)+k\log(n_2)\\
    \leq&k\log(n_2)\Big(1-C^2\log(n_2)/2\Big),
  \end{align*}
  where $C>0$ is a constant. 
 
  Hence, the probability that there exists at least one disconnecting cut of $V_2$, of any cardinality, is at most
  \begin{align*}
    \sum_{k=1}^{n_2/2} e^{k\log(n_2)\big(1-C^2\log(n_2)/2\big)}
    =&\sum_{k=1}^{n_2/2}n_2^{k\big(1-C^2\log(n_2)/2\big)}\\
    \leq&(n_2^{1-C^2\log(n_2)/2})/(1-n_2^{1-C^2\log(n_2)/2}).
  \end{align*}
  Notice that the bipartite graph is disconnected only if we can find a disconnecting cut, or if some vertex of $V_1$ is not covered by any edge. When \(C\geq 2\) and \(n_2\geq 3\) we have \(n_2 \geq e^{4/C^2}\) which implies that
  \[
  1 - n_2^{-1}\leq n_2^{1-C^2\log(n_2)/2} \leq \exp(-\tfrac{C^2}{4}\log^2(n_2)).
  \]
  By the above analysis, the probability of the first event is at most
  \[
  \varepsilon_1 = n_2^{1-C^2\log(n_2)/2}/(1-n_2^{1-C^2\log(n_2)/2})\leq 2\exp\big(-\tfrac{C^2}{4}\log^2(n_2)\big).
  \]
  It remains to bound the probability that the edges do not cover the whole~$V_1$.
  For a fixed vertex in $V_1$, the probability that it is not covered by any edge is
  \begin{align*}
    (1-p_1p_2)^{n_2}
  \end{align*}
  Thus the probability that some vertex of $V_1$ is not covered is
  \begin{align*}
    \varepsilon_2 = n_1(1-p_1p_2)^{n_2}&\leq n_1\big(1-C\log(n_2)/\sqrt{n_1n_2}\big)^{n_2}
  \end{align*}
  The probability that the bipartite graph is disconnected (i.e., unique recovery fails) is at most $\varepsilon_1 + \varepsilon_2$.
  Since 
  \begin{equation*}
      \begin{split}
           n_1 \biggl(1- \frac{C\log n_2}{\sqrt{n_1n_2}}\biggr)^{n_2}
    =& n_1\exp\biggl(n_2\log\big(1 - \frac{C\log(n_2)}{\sqrt{n_1n_2}}\big)\biggr)\\
    \leq & n_1\exp\biggl(-n_2 \cdot \frac{C\log(n_2)}{\sqrt{n_1n_2}}\biggr)\\
    =&n_1 \exp\big(- C\log (n_2)\sqrt{n_2/n_1}\big)\\
    \leq &\exp\big( - (C-1)\log (n_2)\sqrt{n_2/n_1}\big)
      \end{split}
  \end{equation*}
  it follows that
  \[
  \varepsilon_1 + \varepsilon_2\leq 2n_2^{-t}
  \]
  where \(t:=\min \big\{\tfrac{C^2}{4}\log(n_2), (C-1)\sqrt{n_2/n_1}\big\}\geq 1\).
\end{proof}

We proceed to prove the theorem regarding an arbitrary~$d$.

\begin{proof}[Proof of \Cref{general_hp}]
    We proceed by induction on~$d$.
    The base case, $d=2$, follows from \Cref{lem_nonsym_mat}.
    Suppose the theorem holds in dimension $d-1$ and let us show it for dimension~$d$. With \(C\geq 2\prod_{i=3}^d (1 - n_i^{-\sqrt{n_i}})^{-1}\),
    we can find $p_1,\dots, p_d$ with
    \begin{equation*}
    p_1 \geq C/\sqrt{n_1},\qquad
    p_i \geq \log(n_i)/\sqrt{n_i}, \; i =  2,\dots,d
    \end{equation*}
    such that $p = p_1 p_2 \dots p_d$.
    Since the distribution on the tuples $(i_1,i_2,\dots, i_d)$ is uniform,
    we can equivalently interpret it as follows:
    for each $k\in [d]$ independently select a random subset $S_k$ of $[n_k]$, where each $i_k$ is with chosen with probability $p_k$,
    and then select $(i_1,i_2,\dots,i_d)$ if $i_k\in S_k$ for each~$k$. 
    
    Consider the projection $\varphi: [n_1]\times\dots\times[n_{d}]\rightarrow [n_1]\times\dots\times[n_{d-1}]$ with $\varphi(i_1,\dots,i_{d}) = (i_1,\dots,i_{d-1})$, and another projection $\psi: [n_1]\times \dots\times [n_{d}] \rightarrow [n_{d}]$ with $\psi(i_1,\dots,i_{d}) = i_{d}$. 
    The random set $\varphi(\Omega)$ contains each element in $[n_1]\times\dots\times[n_{d-1}]$ independently, with probability
    \begin{equation*}
        \begin{split}
             \Tilde{p} &\geq \Bigg (C\sqrt{\prod_{i=1}^{d-1}n_i^{-1}}\Bigg) \Bigg(\prod_{i=2}^{d-1}\log(n_i)\Bigg)(1-(1-p_d)^{n_d})\\
             &= \Bigg (C'\sqrt{\prod_{i=1}^{d-1}n_i^{-1}}\Bigg) \Bigg(\prod_{i=2}^{d-1}\log(n_i)\Bigg)
        \end{split}
    \end{equation*}
    where \(C' = C (1-(1-p_d)^{n_d})\). Since we have
    \begin{equation*}
        \begin{split}
            (1-p_d)^{n_d} =  \exp(n_d\log(1-p_d))
            \leq  \exp(-n_dp_d)
            =  n_d^{-\sqrt{n_d}}
        \end{split}
    \end{equation*}
    it follows that 
    \begin{align*}
        C'=C(1-(1-p_d)^{n_d})
        \geq 2\prod_{i=3}^{d-1} (1 - n_i^{-\sqrt{n_i}})^{-1}.
    \end{align*}
    Hence, $\varphi(\Omega)$ satisfies the assumption of the theorem in dimension~$d-1$.
    By the induction hypothesis, we have that $\varphi(\Omega)$ satisfies A-propagation with probability at least $1-\sum_{i=1}^{d-2}\varepsilon_i$, where each $\varepsilon_i$ is as in \eqref{bound whp}.
    From now on, we proceed conditionally on the event that $\varphi(\Omega)$ satisfies A-propagation.

    For a set $\Theta \subset [n_1] \times \dots \times [n_{d-1}]$ satisfying A-propagation 
    there is a strongly connected sequence
    $\beta_1,\beta_2,\dots, \beta_K \in \Theta$ of length $K = \sum_{i=1}^{d-1} n_i - d + 2$
    which cover $[n_1]\times\dots\times[n_{d-1}]$ coordinate-wise.
    For each possible $\Theta$ we choose one among all such sequences. 
    We call this the \emph{canonical} sequence of $\Theta$,
    and we denote as
    $\alpha_1(\Theta),\alpha_2(\Theta),\dots, \alpha_K(\Theta)$.
    From the definition of strong connectedness, for any $\alpha_k(\Theta)$, there exists $i_k < k$ such that $\alpha_{i_k}(\Theta)$ and $\alpha_k(\Theta)$ share all but one coordinate.
    Let $E(\Theta)$ consist of all such pairs $\{(i_k,k)\}_{2\leq k \leq K}$ of the canonical sequence.

    Let $\Omega$ such that $\varphi(\Omega)$ satisfies unique recovery.
    We introduce a sufficient condition for $\Omega$ to satisfy A-propagation in terms of the canonical sequence of $\varphi(\Omega)$.
    For $\ell=1,2,\dots,K$, consider the fibers of $\alpha_\ell(\varphi(\Omega))$ under $\varphi$:
    \[
        F_\ell(\Omega) = \{(i_1,\dots,i_{d})\in\Omega:\varphi(i_1,\dots,i_{d}) = \alpha_\ell(\varphi(\Omega))\}.
    \]
    A pair $(j,k)\in E(\varphi(\Omega))$ is called \textit{connected} if there exists
    $\beta_{j}\in F_{j}(\Omega)$, $\beta_k \in F_{k}(\Omega)$ such that
    $\beta_{j},\beta_k$ have $d-1$ indices in common
    (in particular,  $\psi(\beta_{j})=\psi(\beta_k)$).
    Otherwise, it is a \textit{disconnected} pair.
    A sufficient condition for $\Omega$ to satisfy A-propagation is that every pair in $E(\varphi(\Omega))$ is connected together with $\psi\big(\bigcup_{i=1}^K F_i(\Omega)\big) = [n_d]$.
    
    We proceed to upper bound the probability that $\Omega$ does not satisfy A-propagation,
    conditioned on $\varphi(\Omega)$ satisfying A-propagation.
    By the sufficient condition from above, this is upper bounded by
    \begin{align}\label{eq:two_probs_d}
        \Pr[\exists ij \in E(\varphi(\Omega)) \text{ s.t. } (i,j)\text{ disconnected } ] + 
        \Pr[ \psi\big(\cup_{i} F_i(\Omega)\big) \subsetneq [n_d] ]
    \end{align}
    We analyze the first probability. For a fixed pair $(i,j)\in E(\varphi(\Omega))$, the probability that this pair is disconnected is 
    \( (1 - p_d^2)^{n_d}. \)
    Since \(p_d\geq \log(n_d)/\sqrt{n_d},\)
    the probability can be bounded by
    \begin{equation*}
        \big(1-\log^2(n_d)/n_d\big)^{n_d}.
    \end{equation*} 
    The probability that there exists at least one disconnecting pair in $E(\varphi(\Omega))$ can be bounded by
    \begin{equation}\label{eq:general_whp_err_1}
        \begin{split}
             \hat \varepsilon_{d} & \leq K \big(1-\log^2(n_{d})/n_{d}\big)^{n_{d}}\\
        & = \exp\Big(\log((d-1)n_{d-1}) + n_d\log(1-\log^2(n_{d})/n_{d})\Big)\\
        & \leq \exp\Big(\log((d-1)n_{d-1}) - \log^2(n_{d})\Big)\\
        & \leq \exp\big(\log(d-1) + \log(n_{d})(1-\log(n_{d}))\big)\\
        & = (d-1) n_d^{1-\log(n_{d})}
        \end{split}
    \end{equation}
    
    We next bound the second probability in \eqref{eq:two_probs_d}. For a fixed index $\ell \in [n_d]$, the probability that $\ell \notin\psi\big(\cup_{i=1}^K F_i(\Omega)\big)$ is  
    \( (1-p_d)^{K}.\)
    The probability that $\psi\big(\cup_{i=1}^K F_i(\Omega)\big) \neq [n_{d}]$ is bounded by
    \begin{equation}
        \begin{split}\label{eq:general_whp_err_2}
        \Tilde\varepsilon_{d}&\leq n_{d}\big(1-\log(n_{d})/\sqrt{n_{d}}\big)^{K}\\
        &\leq \exp\Big(\log(n_{d}) + (d-1)n_{d-1}\log\big(1-\log(n_{d})/\sqrt{n_{d}}\big)\Big)\\
        &\leq \exp\Big(\log(n_{d}) - (d-1)n_{d-1}\big(\log(n_{d})/\sqrt{n_{d}}\big)\Big)\\
        &\leq \exp\big(\log(n_{d})(1-(d-1)n_{d-1}/\sqrt{n_{d}})\big)\\
        & = n_d^{1 - (d-1)(n_{d-1}/\sqrt{n_d})}
        \end{split}
    \end{equation}
    Hence, the quantity in \eqref{eq:two_probs_d} is bounded by
    $1 - \hat\varepsilon_{d} -\Tilde\varepsilon_{d}
    $.
    
    We are ready to bound the probability that $\Omega$ satisfies A-propagation.
    The probability that $\varphi(\Omega)$ satisfies A-propagation is $1-\sum_{i=2}^{d-1} \varepsilon_i$.
    The probability probability that $\Omega$ satisfies A-propagation conditioned on $\varphi(\Omega)$ satisfying it is
    $1 - \hat\varepsilon_d - \tilde\varepsilon_d$.
    Hence, $\Omega$ satisfies A-propagation with probability at least 
     \[
     (1-\sum_{i=2}^{d-1}\varepsilon_i)(1 - \hat\varepsilon_{d} -\Tilde\varepsilon_{d}) \geq 1 - \sum_{i=2}^{d} \varepsilon_i. 
     \]
     where $\varepsilon_d$ is as in \eqref{bound whp}.
    \end{proof}

\subsection{Proof of \Cref{lem:sdp2}}\label{append:lem:sdp2}
Throughout this section, we use the following notations.
For $\alpha,\beta\in\NN$, let $t_{\alpha,\beta} = T_\alpha/T_\beta$.
Observe that
\begin{equation}\label{eq:1}
q_{\alpha,\beta} := g_\alpha + (t_{\alpha,\beta})^2g_\beta - 2t_{\alpha,\beta}h_{\alpha,\beta} = (f_\alpha - t_{\alpha,\beta}f_\beta)^2.
\end{equation}
And for $\alpha,\beta,\gamma\in\NN$ and $\delta,\zeta\in\RR$, we have
\begin{equation}\label{eq:2}
\begin{split}
\bar{q}(\alpha,\beta,\gamma;\delta,\zeta):=&\delta^2g_\alpha + (\zeta/\delta)^2g_\beta + (\delta t_{\alpha,\gamma} + \zeta t_{\beta,\gamma}/\delta)^2g_\gamma + 2\zeta h_{\alpha,\beta} \\
&-2(\delta t_{\alpha,\gamma} + \zeta t_{\beta,\gamma}/\delta)(\delta h_{\alpha,\gamma} + (\zeta/\delta)h_{\beta,\gamma})\\
=&\Big(\delta f_\alpha + (\zeta/\delta)f_\beta - (\delta t_{\alpha,\gamma} + \zeta t_{\beta,\gamma}/\delta)f_\gamma\Big)^2.
\end{split}
\end{equation}

Before proceeding to the proof of \Cref{lem:sdp2}. We start with the following definition.

\begin{definition}
     Given \(\Omega\) which satisfies RS-propagation, for any \(\beta \in \NN\setminus\Omega\), a sequence $s(\beta) = (\beta^{(0)}, \beta^{(1)}, \dots, \beta^{(m_\beta)})$ is a \textit{SR-propagation sequence of} \(\beta\) if it satisfies
     \begin{enumerate}[label = (\roman*)]
         \item $\beta^{(0)} = \beta$, $\beta^{(m_\beta)} \in \Omega$ where \(m_\beta\in\mathbb{Z}\) is a constant which depends on \(\beta\).
         \item For \(i=0,\dots,m_\beta-1\), there exists \(\alpha_i,\tilde\alpha_{i+1}\in\Omega\) such that 
         \[
         \{\alpha_i,\beta^{(i)};\tilde\alpha_{i+1},\beta^{(i+1)}\}
         \]
         is a square.
     \end{enumerate}
\end{definition}

\begin{lemma}\label{lem:propagation_sequence_induction}
   Given a SR-propagation sequence $s(\beta) = (\beta^{(0)}, \beta^{(1)}, \dots, \beta^{(m_\beta)})$, for any \(\ell = 1,\dots,m_\beta\), there exists constants \(\{\delta_0,\dots,\delta_{\ell-1}, c_\ell\}\) and a polynomial \(\tilde p_\ell \in \Sigma_{N,2}\) such that 
    \begin{equation}\label{eq:prop_induction}
    \begin{split}
    \sum_{i=0}^{\ell-1} \delta_{i}^2 g_{\beta^{(i)}} - 2c_{\ell}h_{\beta^{(\ell)}, \tilde \alpha_\ell}
    = \delta_{0}(f_{\beta^{(0)}} -  t_{\beta^{(0)},\alpha_0}f_{\alpha_0})^2 + \tilde p_\ell \bmod (I_2 + J_2)\\
    \end{split}
\end{equation}
\end{lemma}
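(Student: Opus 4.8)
The plan is to prove the identity by induction on $\ell$, propagating an SOS certificate along the sequence $s(\beta)$ one square at a time. The two algebraic identities \eqref{eq:1} and \eqref{eq:2} are the engines: \eqref{eq:1} converts a single $g_{\beta^{(i)}}$ together with a cross term $h$ into a perfect square modulo $I_2$, while \eqref{eq:2} lets me trade one observed cross term for another while spawning a fresh perfect square. At each step the term $-2c_\ell h_{\beta^{(\ell)},\tilde\alpha_\ell}$ plays the role of an accumulated remainder that gets pushed one index further along the sequence, and the defining square $\{\alpha_\ell,\beta^{(\ell)};\tilde\alpha_{\ell+1},\beta^{(\ell+1)}\}$ is what moves the cross term from $\beta^{(\ell)}$ to $\beta^{(\ell+1)}$ modulo $J_2$.

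For the base case $\ell=1$, I would apply \eqref{eq:1} with $\alpha=\beta^{(0)}$ and $\beta=\alpha_0$. Since $\alpha_0\in\Omega$ we have $g_{\alpha_0}\in I_2$, so reducing modulo $I_2$ gives $g_{\beta^{(0)}} - 2t_{\beta^{(0)},\alpha_0}\,h_{\beta^{(0)},\alpha_0} = (f_{\beta^{(0)}} - t_{\beta^{(0)},\alpha_0}f_{\alpha_0})^2 \bmod I_2$. The square $\{\alpha_0,\beta^{(0)};\tilde\alpha_1,\beta^{(1)}\}$ supplies the generator $x_{\alpha_0}x_{\beta^{(0)}} - x_{\tilde\alpha_1}x_{\beta^{(1)}}\in J_2$ and the true-value relation $T_{\alpha_0}T_{\beta^{(0)}}=T_{\tilde\alpha_1}T_{\beta^{(1)}}$, whence $h_{\beta^{(0)},\alpha_0}=h_{\beta^{(1)},\tilde\alpha_1}\bmod J_2$. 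Substituting yields the claim with $\delta_0=1$, $c_1=t_{\beta^{(0)},\alpha_0}$ and $\tilde p_1=0$.

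For the inductive step I would invoke \eqref{eq:2} with $\alpha=\tilde\alpha_\ell$, $\beta=\alpha_\ell$, and $\gamma=\beta^{(\ell)}$. Because $\tilde\alpha_\ell,\alpha_\ell\in\Omega$, the terms $g_{\tilde\alpha_\ell}$, $g_{\alpha_\ell}$ and $h_{\tilde\alpha_\ell,\alpha_\ell}$ all lie in $I_2$, so modulo $I_2$ the identity collapses to $C^2 g_{\beta^{(\ell)}} - 2C\delta\,h_{\beta^{(\ell)},\tilde\alpha_\ell} - 2C(\zeta/\delta)\,h_{\beta^{(\ell)},\alpha_\ell} = \bar q$, where $C=\delta t_{\tilde\alpha_\ell,\beta^{(\ell)}} + \zeta t_{\alpha_\ell,\beta^{(\ell)}}/\delta$ and $\bar q\in\Sigma_{N,2}$. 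I would choose the free parameters $\delta,\zeta$ so that $C\delta=-c_\ell$; solving $\delta^2 t_{\tilde\alpha_\ell,\beta^{(\ell)}} + \zeta t_{\alpha_\ell,\beta^{(\ell)}} = -c_\ell$ for $\zeta$ is possible since $t_{\alpha_\ell,\beta^{(\ell)}}\neq 0$. Adding this relation to the induction hypothesis cancels the $h_{\beta^{(\ell)},\tilde\alpha_\ell}$ term exactly, produces $+C^2 g_{\beta^{(\ell)}}$ (set $\delta_\ell:=|C|$), and leaves a single new cross term $-2C(\zeta/\delta)\,h_{\beta^{(\ell)},\alpha_\ell}$. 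Finally the square at step $\ell$ gives $h_{\beta^{(\ell)},\alpha_\ell}=h_{\beta^{(\ell+1)},\tilde\alpha_{\ell+1}}\bmod J_2$, so setting $c_{\ell+1}:=C\zeta/\delta$ and $\tilde p_{\ell+1}:=\tilde p_\ell+\bar q$ completes the step, and $\tilde p_{\ell+1}$ remains SOS since $\bar q$ is a perfect square.

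The main obstacle is the sign bookkeeping in the inductive step: the construction only works because the coefficient of $g_{\beta^{(\ell)}}$ in \eqref{eq:2} is $C^2\geq 0$ and because $\bar q$ enters with a $+$ sign, so the accumulated remainder $\tilde p_\ell$ stays in $\Sigma_{N,2}$ rather than picking up a negative square. Arranging the cancellation $C\delta=-c_\ell$ simultaneously with a positive $g_{\beta^{(\ell)}}$ coefficient is the delicate point, and it is precisely what forces the three-point form of \eqref{eq:2}. A minor but necessary prerequisite is that $T_{\beta^{(\ell)}}\neq 0$ for every index in the sequence, so the ratios $t_{\cdot,\beta^{(\ell)}}$ are well defined; this follows from the nonzero observations, since a rank-one completion whose observed entries are nonzero and whose mask meets unique recovery has all factors, and hence all entries, nonzero.
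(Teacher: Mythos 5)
Your strategy coincides with the paper's: induct along the propagation sequence, handle the base case with \eqref{eq:1}, and in the inductive step use \eqref{eq:2} to cancel the accumulated cross term $h_{\beta^{(\ell)},\tilde\alpha_\ell}$ and spawn a new one on the pair $(\beta^{(\ell)},\alpha_\ell)$, which the square $\{\alpha_\ell,\beta^{(\ell)};\tilde\alpha_{\ell+1},\beta^{(\ell+1)}\}$ converts to $h_{\beta^{(\ell+1)},\tilde\alpha_{\ell+1}}$ modulo $J_2$. The only structural difference is cosmetic: the paper instantiates $\bar q(\beta^{(\ell)},\tilde\alpha_\ell,\alpha_\ell;\delta_\ell,c_\ell)$ with $\delta_\ell$ free and $c_{\ell+1}$ determined, while you place $\beta^{(\ell)}$ in the $\gamma$ slot and solve $C\delta=-c_\ell$ for $\zeta$; both are the same perfect square of a linear form in $f_{\beta^{(\ell)}},f_{\tilde\alpha_\ell},f_{\alpha_\ell}$ up to reparametrization.

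There is, however, one incorrect justification in your inductive step. You assert that $h_{\tilde\alpha_\ell,\alpha_\ell}$ lies in $I_2$ because $\tilde\alpha_\ell,\alpha_\ell\in\Omega$, and conclude that the identity ``collapses'' modulo $I_2$. This is false: $I_2$ is spanned by the polynomials $f_\alpha=x_\alpha-\hat T_\alpha$ and $g_\alpha=x_\alpha^2-\hat T_\alpha^2$, which contain no cross monomials, so $h_{\tilde\alpha_\ell,\alpha_\ell}=x_{\tilde\alpha_\ell}x_{\alpha_\ell}-T_{\tilde\alpha_\ell}T_{\alpha_\ell}\notin I_2$ (and is not in $I_2+J_2$ either, since every element of $J_2$ has cross-monomial coefficients summing to zero). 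The term $2\zeta h_{\tilde\alpha_\ell,\alpha_\ell}$ produced by \eqref{eq:2} therefore cannot simply be dropped; it must be moved into the SOS remainder using \Cref{lem:sos1}, which gives $\pm h_{\tilde\alpha_\ell,\alpha_\ell}\in\Sigma_{N,2}\bmod I_2$ for observed pairs. This is exactly the step the paper performs, and it is why the lemma is stated as an identity modulo $(I_2+J_2)$ with an SOS slack $\tilde p_\ell$ rather than an exact cancellation. With this substitution (set $\tilde p_{\ell+1}$ equal to $\tilde p_\ell$ plus $\bar q$ plus the SOS representative of the leftover $h_{\tilde\alpha_\ell,\alpha_\ell}$ term), your argument goes through unchanged. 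Your closing remark that all $T_{\beta^{(\ell)}}$ are nonzero, so the ratios $t_{\cdot,\cdot}$ are well defined, is correct and is used implicitly by the paper as well.
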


\begin{proof}[Proof of \Cref{lem:propagation_sequence}]
We will prove this lemma by induction. Consider the base case, $\ell=1$.
There exists $\alpha_0,\tilde \alpha_1 \in \Omega$ such that
$\{\alpha_0,\beta^{(0)};\tilde\alpha_1,\beta^{(1)}\}$ is a square.
Let constant $c_1 := \delta^2_{0}t_{\beta^{(0)},\alpha_0}$.
From \eqref{eq:1}, we obtain
\begin{equation}\label{eq:base}
\begin{split}
    &\delta^2_{0}g_{\beta^{(0)}} -2c_1h_{\beta^{(1)},\tilde\alpha_1}\\
    =& \delta^2_{0}\Big(g_{\beta^{(0)}} + (t_{\beta^{(0)},\alpha_0})^2g_{\alpha_0} - 2t_{\beta^{(0)},\alpha_0}h_{\beta^{(1)},\tilde\alpha_1}\Big)\bmod (I_2 + J_2)\\
    =& \delta^2_{0}\Big(g_{\beta^{(0)}} + (t_{\beta^{(0)},\alpha_0})^2g_{\alpha_0} - 2t_{\beta^{(0)},\alpha_0}h_{\beta^{(0)},\alpha_0}\Big)\bmod (I_2 + J_2)\\
    =& \delta^2_0(f_{\beta^{(0)}} - t_{\beta^{(0)},\alpha_0}f_{\alpha_0})^2 \bmod (I_2 + J_2)
\end{split}
\end{equation}
where the second equality holds because $\{\alpha_0,\beta^{(0)};\tilde\alpha_1,\beta^{(1)}\}$ is a square which implies that $h_{\beta^{(0)},\alpha_0} = h_{\beta^{(1)},\tilde\alpha_1} \bmod J_2$.
We have shown that \eqref{eq:prop_induction} holds for the base case.

Suppose the induction hypothesis \eqref{eq:prop_induction} holds for some $\ell < m$. There exists $\alpha_\ell,\tilde\alpha_{\ell+1}\in\Omega$ such that $\{\alpha_\ell,\beta^{(\ell)};\tilde\alpha_{\ell+1},\beta^{(\ell+1)}\}$ is a square. Let $c_{\ell+1} = \delta_{\ell}^2 t_{\beta^{(\ell)},\alpha_\ell} + c_\ell  t_{\tilde \alpha_\ell,\alpha_\ell}$.
By the definition of $\bar q$ and $c_{\ell+1}$,
we have that
\begin{equation}
\begin{split}
     & \sum_{i=0}^{\ell} \delta_{i}^2 g_{\beta^{(i)}} - 2c_{\ell+1}h_{\beta^{(\ell+1)}, \tilde \alpha_{\ell+1}}\\
     =&\Big(\delta_{\ell}^2g_{\beta^{(i)}} - 2c_{\ell+1}h_{\beta^{(\ell)}, \alpha_{\ell}} + 2c_{\ell}h_{\beta^{(\ell)}, \tilde \alpha_\ell} - 2\Big(c_\ell/\delta_{\ell}^2\Big)c_{\ell+1}h_{\tilde\alpha_{\ell}, \alpha_{\ell}}\Big)\\
     &+ \Big(\sum_{i=0}^{\ell-1}\delta_{i}^2 g_{\beta^{(i)}} - 2c_{\ell}h_{\beta^{(\ell)}, \tilde \alpha_\ell}\Big) +2\Big(c_\ell/\delta_{\ell}^2\Big)c_{\ell+1}h_{\tilde\alpha_{\ell}, \alpha_{\ell}} \bmod (I_2 {+} J_2)\\
     =&\bar q\big(\beta^{(\ell)},\tilde \alpha_\ell, \alpha_\ell;\delta_\ell, c_\ell\big)  + \Big(\sum_{i=0}^{\ell-1} \delta_{i}^2 g_{\beta^{(i)}} - 2c_{\ell}h_{\beta^{(\ell)}, \tilde \alpha_\ell}\Big) \\
     &+2\Big(c_\ell/\delta_{\ell}^2\Big)c_{\ell+1}h_{\tilde\alpha_{\ell}, \alpha_{\ell}}\bmod (I_2 {+} J_2)\\
\end{split}
\end{equation}
where the first equality holds because $\{\alpha_\ell,\beta^{(\ell)};\tilde\alpha_{\ell+1},\beta^{(\ell+1)}\}$ is a square, and then $h_{\beta^{(\ell+1)}, \tilde \alpha_{\ell+1}} = h_{\beta^{(\ell)},  \alpha_{\ell}} \bmod J_2$. The second equality follows from that $g_{\alpha_\ell}=g_{\tilde\alpha_\ell}= 0 \bmod I_2$.

From \Cref{lem:sos1}, we have that \(\pm h_{\tilde\alpha_\ell,\alpha_\ell}\in\Sigma_{N,2}\bmod I_2\). Together with \eqref{eq:2} and \eqref{eq:prop_induction}, we conclude that
\begin{equation}
\begin{split}
     \sum_{i=0}^{\ell}&\delta_{i}^2 g_{\beta^{(i)}} - 2c_{\ell+1}h_{\beta^{(\ell+1)}, \tilde \alpha_{\ell+1}(\beta)}\\
     =& \delta_{0}^2(f_{\beta^{(0)}} -  t_{\beta^{(0)},\alpha_0}f_{\alpha_0})^2 + \tilde p_{\ell+1} \bmod (I_2 + J_2)
\end{split}
\end{equation}
for some $\tilde p_{\ell+1}\in\Sigma_{N,2}$.
So \eqref{eq:prop_induction} holds for $\ell+1$,
completing the induction.
\end{proof}

\begin{lemma}\label{lem:propagation_sequence}
 Given a SR-propagation sequence $s(\beta) = (\beta^{(0)}, \beta^{(1)}, \dots, \beta^{(m_\beta)})$, there exists constants \(\{\delta_{\beta^{(i)}}\}_{i=0,\dots,m_\beta}\) and \(\tilde p_\beta \in \Sigma_{N,2}\) such that
\begin{equation}\label{eq:indction_end}
   \sum_{i=0}^{m_\beta} \delta_{\beta^{(i)}}^2 g_{\beta^{(i)}} = \delta_{\beta^{(0)}}^2(f_{\beta} -  t_{\beta,\alpha_0}f_{\alpha_0})^2+ \tilde p_{\beta} \bmod (I_2 + J_2),
\end{equation}
\end{lemma}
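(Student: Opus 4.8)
The plan is to obtain \Cref{lem:propagation_sequence} as the terminal case $\ell = m_\beta$ of the induction already established in \Cref{lem:propagation_sequence_induction}, and then to clean up the two leftover non-SOS terms by exploiting the fact that the endpoint $\beta^{(m_\beta)}$ of the propagation sequence lies in $\Omega$.

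Concretely, I would first invoke \Cref{lem:propagation_sequence_induction} at $\ell = m_\beta$, which supplies constants $\delta_0,\dots,\delta_{m_\beta-1}$, a constant $c_{m_\beta}$, and an SOS polynomial $\tilde p_{m_\beta}\in\Sigma_{N,2}$ with
\[
  \sum_{i=0}^{m_\beta-1}\delta_i^2 g_{\beta^{(i)}} - 2c_{m_\beta} h_{\beta^{(m_\beta)},\tilde\alpha_{m_\beta}}
  = \delta_0^2\bigl(f_\beta - t_{\beta,\alpha_0} f_{\alpha_0}\bigr)^2 + \tilde p_{m_\beta} \bmod (I_2 + J_2).
\]
Setting $\delta_{\beta^{(i)}} := \delta_i$ for $i < m_\beta$, it remains only to reconcile this with the target identity, whose left-hand side carries the extra diagonal term $\delta_{\beta^{(m_\beta)}}^2 g_{\beta^{(m_\beta)}}$ and, crucially, no $h$-term.

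I would then use property~(i) of the SR-propagation sequence, namely $\beta^{(m_\beta)}\in\Omega$, which has two consequences. First, $g_{\beta^{(m_\beta)}} = x_{\beta^{(m_\beta)}}^2 - \hat T_{\beta^{(m_\beta)}}^2$ lies in $I_2$, so the extra term $\delta_{\beta^{(m_\beta)}}^2 g_{\beta^{(m_\beta)}}$ vanishes modulo $I_2$ for any choice of the free constant $\delta_{\beta^{(m_\beta)}}$, and I may add it to the left-hand side at no cost. Second, since $\tilde\alpha_{m_\beta}\in\Omega$ as well, both indices of $h_{\beta^{(m_\beta)},\tilde\alpha_{m_\beta}}$ are observed, so \Cref{lem:sos1} gives $\pm h_{\beta^{(m_\beta)},\tilde\alpha_{m_\beta}}\in\Sigma_{N,2}\bmod I_2$. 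Choosing the sign matching that of $c_{m_\beta}$, the term $2c_{m_\beta} h_{\beta^{(m_\beta)},\tilde\alpha_{m_\beta}}$ equals a nonnegative multiple of an SOS modulo $I_2$; absorbing it into $\tilde p_{m_\beta}$ (modulo an $I_2$ remainder, which is swallowed by the $\bmod (I_2+J_2)$) produces an honest SOS $\tilde p_\beta\in\Sigma_{N,2}$, and the desired identity follows.

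I expect the whole argument to be short, since the substance is carried by the induction. The one point that genuinely matters — and the reason this lemma is stated separately rather than folded into \Cref{lem:propagation_sequence_induction} — is that the dangling cross-term $h_{\beta^{(\ell)},\tilde\alpha_\ell}$ cannot be eliminated at intermediate steps, where $\beta^{(\ell)}\notin\Omega$; it can only be converted into an SOS once the propagation reaches an observed index. So the only delicate part is recognizing that termination at $\Omega$ is exactly what licenses this final application of \Cref{lem:sos1}. I would also double-check the sign bookkeeping in $2c_{m_\beta} h$ and confirm that the free constant $\delta_{\beta^{(m_\beta)}}$ can be pinned down later (e.g.\ when summing over $\beta$ in the proof of \Cref{lem:sdp2}) without affecting this identity.
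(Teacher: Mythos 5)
Your proposal is correct and follows essentially the same route as the paper: invoke \Cref{lem:propagation_sequence_induction} at $\ell = m_\beta$ and then use $\beta^{(m_\beta)}\in\Omega$ together with \Cref{lem:sos1} to absorb the dangling term $h_{\beta^{(m_\beta)},\tilde\alpha_{m_\beta}}$ into the SOS part. Your additional observations — that the extra diagonal term $\delta_{\beta^{(m_\beta)}}^2 g_{\beta^{(m_\beta)}}$ can be added for free since $g_{\beta^{(m_\beta)}}\in I_2$, and the sign bookkeeping for $c_{m_\beta}$ — are details the paper leaves implicit, and you handle them correctly.
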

\begin{proof}
    From \Cref{lem:propagation_sequence_induction}, consider \eqref{eq:prop_induction} for $\ell = m_\beta$. Since $\beta^{(m_\beta)}\in\Omega$, then
\begin{equation}\label{eq:induction_0}
    \pm h_{\beta^{(m_\beta)},\tilde \alpha_{m_\beta}} \in \Sigma_{N,2} \bmod I_2
\end{equation}
Note that \(\beta^{(0)}=\beta\). Combining \eqref{eq:prop_induction} and \eqref{eq:induction_0} leads to \eqref{eq:indction_end}.
\end{proof}

Then we are ready to prove \Cref{lem:sdp2}.
\begin{proof}[Proof of  \Cref{lem:sdp2}]

Given \(\beta\in\NN\setminus\Omega\), there exists a SR-propagation sequence \(s(\beta) = (\beta^{(0)},\dots,\beta^{(m_\beta)})\). From \Cref{lem:propagation_sequence}, there exists constants \(\{\delta_{\beta^{(i)}}\big(s(\beta)\big)\}_{i=0,\dots,m_\beta}\) and \(\tilde p_\beta \in \Sigma_{N,2}\) such that \eqref{eq:indction_end} holds.
Note that $\beta$ may also appear on the sequences of other elements in $\NN\setminus\Omega$, that is, there exists $\hat \beta$ with SR-propagation sequence $s(\hat \beta) = (\hat\beta^{(0)},\dots,\hat\beta^{(m_{\hat\beta})})$ such that $\beta = \hat\beta^{(r)}$ for some $r\in [m_{\hat\beta}]$. Let
$\mathcal{Y}_{\beta}:=\big\{\hat\beta\in\NN\setminus\Omega: \exists~r\in [m_{\hat\beta}] \text{~s.t.~} \beta = \hat\beta^{(r)}\big\}$ be the set of indices whose SR-propagation sequences contain $\beta$. Note that \(\beta\) appears in the SR-propagation sequence \(\{s(\hat \beta): \hat\beta \in\mathcal{Y}_\beta\}\) and \(s(\beta)\).

Additionally, let \(\delta^2_{\beta}\big(s(\hat \beta)\big)\) denote the coefficient of $g_{\beta}$ within the SR-propagation sequences of $s(\hat{\beta})$, analogous to \eqref{eq:indction_end}.
Let $\varrho_{\beta}:= \sum_{\hat \beta\in\mathcal{Y}_{\beta}}\delta^2_{\beta}\big(s(\hat \beta)\big)$. Notice that each $\delta^2_{\beta}\big(s(\hat \beta)\big)$ can be chosen arbitrarily small. Hence, $\varrho_{\beta}$ can also be arbitrarily small. On the SR-propagation sequence of \(\beta\), we have \(\delta_{\beta^{(0)}}(s(\beta))=\delta_{\beta}(s(\beta))\). 
 
Let \(\varrho_{\beta}\) and \(\delta_{\beta}^2(s(\beta))\) be chosen to satisfy $\varrho_{\beta} + \delta^2_{\beta^{(0)}}(s(\beta))=\varrho_{\beta} + \delta_{\beta}^2(s(\beta))=\xi_
\beta$. (If $\mathcal{Y}_{\beta} = \emptyset$, then $\varrho_{\beta} = 0$). By applying \eqref{eq:indction_end} for all $\beta\in\NN\setminus\Omega$ and defining $\eta_\beta :=\xi_\beta - \varrho_{\beta}= \delta^2_{\beta}(s(\beta))$, it follows that
\begin{equation}\label{eq:combine_all}
    \sum_{\beta\in\NN\setminus\Omega} \xi_\beta g_{\beta} = \sum_{\beta\in\NN} \xi_\beta g_{\beta} = \sum_{\beta\in \NN}\eta_{\beta}\Big(f_{\beta} - t_{\beta,\alpha_{\beta}}f_{\alpha_{\beta}}\Big)^2 + \tilde p = \sigma \bmod (I_2 + J_2)
\end{equation}
where $\alpha_{\beta}\in \Omega$ is related to ${\beta}$, and each $\eta_{\beta}$ is close to $\xi_\beta$ arbitrarily. The first equality in \eqref{eq:combine_all} follows from the definition of $I_2$, and the second equality is a consequence of \eqref{eq:indction_end}.

To demonstrate that the rank of associated Gram matrix of $\sigma$ is of $N$, we will show the associated Gram matrix of 
\begin{equation}\label{eq:sos_high_rank}
    \sum_{\beta\in \NN}\eta_\beta\Big(f_\beta - t_{\beta,\alpha_\beta}f_{\alpha_\beta}\Big)^2 =\sum_{\beta\in \NN}\eta_{\beta}\Big(x_\beta - \frac{T_\beta}{T_{\alpha_\beta}}x_{\alpha_\beta}\Big)^2,
\end{equation}
is of rank of $N$.
Recall the definition of $e_\beta$ in \eqref{eq:dual_sdp} and let $r_\beta = [0;e_\beta - (T_\beta/T_{\alpha_\beta})e_{\alpha_\beta}]\in\RR^{N+1}$, it then follows that the Gram matrix of \eqref{eq:sos_high_rank}  is
\[
\sum_{\beta\in \NN}\eta_{\beta}  r_\beta r_\beta^T,
\]
Since $\{r_\beta\}_{\beta\in \NN}$ are linearly independent, it can be shown that
\[
\rank\Big(\sum_{\beta\in \NN}\eta_{\beta} r_\beta r_\beta^T\Big) = N.
\] 
Hence, the rank of Gram matrix of $\sigma$ is greater or equal to $N$. Clearly, $[1;\bar x]$ is in the kernel of this Gram matrix. Thus, its rank must be~$N$.
\end{proof}
    
\subsection{Proof of \Cref{thm:tight_weighted_sdp}}\label{append:thm:tight_weighted_sdp}
To prove this theorem, we follow a strategy similar to that used in \Cref{thm:tightsdp2}. In the light of \Cref{lem:tight_weighted_sdp}, a SOS polynomial can be constructed. We begin with the dual of \eqref{eq:weighted_sdp},

\begin{equation} \label{eq:dual_weighted_sdp}
\tag{dSDP-$\mathrm{E}^w$}
\begin{aligned}
  \max_{\lambda,\mu} \quad & \rho\\
  \quad\text{ s.t. }\quad
  &Q:= \begin{pmatrix} 0 & 0 \\ 0 & \diag(w)\end{pmatrix} -\rho \begin{pmatrix} 1 & 0 \\ 0 & 0 \end{pmatrix} -\sum_{\alpha\in \Omega} \frac{\nu_\alpha}{2} \begin{pmatrix} -2\hat T_\alpha & e_\alpha^T \\ e_\alpha & 0 \end{pmatrix} \\
  &- \sum_{\alpha\in \Omega}\lambda_{\alpha}\begin{pmatrix} -\hat T_\alpha^2 & 0 \\ 0 & \diag(e_\alpha) \end{pmatrix} \\
  & -\sum_{\{\alpha_i,\alpha_j;\alpha_k,\alpha_\ell\}\in\mathcal{A}}\frac{\mu_{ijk\ell}}{2}\Bigg(\begin{pmatrix} 0 & 0 \\ 0 & E_{\alpha_i,\alpha_j}\end{pmatrix}-\begin{pmatrix} 0 & 0 \\ 0 & E_{\alpha_k,\alpha_\ell} \end{pmatrix}\Bigg)\succeq 0.
\end{aligned}
\end{equation}

\begin{lemma}\label{lem:tight_weighted_sdp}
    Assume that $\Omega$ satisfies GS-propagation.
    Then for any $ \beta\in \NN\setminus \Omega$ and any constant $c_\beta$ there exists a set of indices $S_\beta\subseteq \NN$ and a set of nonnegative weights $\{w_\gamma^{(\beta)}\}_{\gamma\in S_\beta}$ such that 
    \begin{equation}\label{eq:prop_weighted}
         (x_\beta^2 - T_{\beta}^2) + \sum_{\gamma\in S_\beta} w^{(\beta)}_\gamma (x_{\gamma}^2 - T_\gamma^2) = -2c_{\beta}(x_\beta - T_{\beta}) + \tilde p_\beta \bmod (I_2 + J_2), 
    \end{equation}
    for some $\tilde p_\beta \in \Sigma_{N,2}.$
    If, in addition, $\Omega$ satisfies S-propagation,
    then
    $$ S_\beta \subseteq \cup_{r=0}^{\ell} \tilde B_r
    \text{ for all }\beta \in \tilde B_{\ell+1},$$
    where, $\tilde B_0 = \mathcal{P}_{SR}(\Omega)$ and
    \begin{align*}
     \tilde B_{\ell} = \{\beta\in\NN\setminus\Omega: \exists~ \alpha_i,\alpha_j,\alpha_k\in\cup_{r=0}^{\ell-1} \tilde B_u\text{~s.t.~}&\\
     \{\beta,\alpha_i;\alpha_j,\alpha_k\}\text{~is a square}&\}
    \end{align*} 
    \end{lemma}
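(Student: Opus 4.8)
The plan is to prove \Cref{lem:tight_weighted_sdp} by induction on the propagation layer of $\beta$, which reduces the global statement to the analysis of a single propagation step. Throughout I would write $\bar g_\gamma := x_\gamma^2 - T_\gamma^2$ and $\bar f_\gamma := x_\gamma - T_\gamma$ for every $\gamma\in\NN$, so that $\bar g_\gamma = g_\gamma$ and $\bar f_\gamma = f_\gamma$ lie in $I_2$ exactly when $\gamma\in\Omega$; for $\gamma\notin\Omega$ these are genuine residual polynomials that the certificate must carry. The engine of the argument is the elementary completion-of-squares identity
\[
\bar g_\beta + 2 c_\beta \bar f_\beta = (x_\beta + c_\beta)^2 - (T_\beta + c_\beta)^2 ,
\]
valid for every shift $c_\beta\in\RR$. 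This is where the free linear slack $-2c_\beta\bar f_\beta$ on the right of \eqref{eq:prop_weighted} originates: completing the square of $x_\beta$ about the center $-c_\beta$ trades $\bar g_\beta$ for the perfect square $(x_\beta+c_\beta)^2$ at the cost of that linear term, and the weights $w_\gamma^{(\beta)}$ are then chosen, as functions of $c_\beta$, so as to reduce the remaining polynomial to a sum of squares modulo $I_2+J_2$.

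For the S-propagation refinement I would induct directly on the layers $\tilde B_\ell$, since every S-propagation step is a genuine square. Fix $\beta\in\tilde B_{\ell+1}$ and a square $\{\beta,\alpha_i;\alpha_j,\alpha_k\}$ with $\alpha_i,\alpha_j,\alpha_k\in\bigcup_{r\le\ell}\tilde B_r$. The square gives $x_\beta x_{\alpha_i}\equiv x_{\alpha_j}x_{\alpha_k}\bmod J_2$ and $T_\beta T_{\alpha_i}=T_{\alpha_j}T_{\alpha_k}$, and mimicking the computation behind \eqref{eq:1} (expanding $(x_\beta - t\,x_{\alpha_i})^2$ with $t=T_\beta/T_{\alpha_i}$, reducing modulo $J_2$, and checking that the constants cancel) yields
\[
\bar g_\beta = (x_\beta - t\,x_{\alpha_i})^2 - t^2\,\bar g_{\alpha_i} + 2t\,\bar h_{\alpha_j,\alpha_k} \bmod J_2 ,
\]
where $\bar h_{\alpha_j,\alpha_k}=x_{\alpha_j}x_{\alpha_k}-T_{\alpha_j}T_{\alpha_k}$. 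I would then rewrite $\bar h_{\alpha_j,\alpha_k}$ through a completion of squares as in \eqref{eq:1}–\eqref{eq:2}, expressing it via $\bar g_{\alpha_j},\bar g_{\alpha_k}$ and a perfect square; among the two available representations of $\bar h$ one chooses the one whose square enters with a nonnegative coefficient. This exhibits $\bar g_\beta$ using only $\bar g_{\alpha_i},\bar g_{\alpha_j},\bar g_{\alpha_k}$ with indices in $\bigcup_{r\le\ell}\tilde B_r$, giving $S_\beta\subseteq\bigcup_{r\le\ell}\tilde B_r$, and after the slack shift it takes the form \eqref{eq:prop_weighted}. Nonnegativity of the weights is preserved by carrying a running bag of positive multipliers exactly as the $\delta_i^2$ are carried through \Cref{lem:propagation_sequence_induction}.

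For the first part, under only GS-propagation, the obstruction is that a generalized square need not be any single square, so the relation $v_{\alpha_1}+v_{\alpha_2}+v_{\alpha_3}+v_\beta = 0$ of \eqref{generalized_square} does not lie in $J_2$ and the one-step reduction above does not apply verbatim. The plan here is first to show that a generalized square factors into genuine squares through auxiliary grid points: the $\FF_2$ relation forces the four indices to pair up coordinatewise, so the true values obey $T_{\alpha_1}T_{\alpha_2}T_{\alpha_3}T_\beta=(T_\rho T_\sigma)^2$ for suitable $\rho,\sigma\in\NN$ (obtained by picking, in each coordinate, the two distinct values), and this degree-four identity splits into a bounded chain of square relations upon introducing finitely many intermediate tuples. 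Feeding each link of the chain through the one-step reduction of the previous paragraph expresses $\bar g_\beta$ in terms of the $\bar g$'s of $\alpha_1,\alpha_2,\alpha_3$ and of the auxiliary tuples, which together constitute $S_\beta\subseteq\NN$, again with nonnegative weights and a single surviving linear residual $-2c_\beta\bar f_\beta$.

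The step I expect to be the main obstacle is precisely this generalized-square factorization coupled with the simultaneous control of signs: unlike the S-propagation case, the auxiliary tuples need not belong to any controlled layer, so I must verify both that the chain of squares can always be realized inside $\NN$ and that at each link the sign freedom in the $\bar h$-rewrite suffices to keep every coefficient nonnegative. A secondary but delicate point is the bookkeeping of the accumulated weights across the recursion, so that when these per-$\beta$ certificates are later summed in the proof of \Cref{thm:tight_weighted_sdp} the individual linear slacks can be matched and the aggregate weight vector remains nonnegative.
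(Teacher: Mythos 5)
Your overall architecture --- induction on propagation layers, a completion-of-squares reduction across each square, and a decomposition of generalized squares into genuine squares via auxiliary grid points --- is the same as the paper's. But there is a genuine gap in your one-step reduction, and it sits exactly at the point the arbitrary constant $c_\beta$ in the statement is designed to handle. You reduce $\bar g_\beta \equiv (x_\beta - t x_{\alpha_i})^2 - t^2\bar g_{\alpha_i} + 2t\bar h_{\alpha_j,\alpha_k} \bmod J_2$ with $t = T_\beta/T_{\alpha_i}$ (correct), and then claim that by choosing between the two representations of $\bar h_{\alpha_j,\alpha_k}$ you can absorb it into $\bar g_{\alpha_j},\bar g_{\alpha_k}$ plus a nonnegatively weighted square, so that $S_\beta$ is just $\{\alpha_i,\alpha_j,\alpha_k\}$ and no linear residuals arise. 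That choice is never available: the residual-free representation is $(x_{\alpha_j}-\lambda x_{\alpha_k})^2 = \bar g_{\alpha_j}+\lambda^2\bar g_{\alpha_k}-2\lambda\bar h_{\alpha_j,\alpha_k}$ with $\lambda = T_{\alpha_j}/T_{\alpha_k}$, and it yields weights and a square coefficient of the correct sign only when $t/\lambda<0$; but the square relation $T_\beta T_{\alpha_i}=T_{\alpha_j}T_{\alpha_k}$ forces $t/\lambda = T_{\alpha_k}^2/T_{\alpha_i}^2>0$ (and the same with $j,k$ swapped). Any other choice of $\lambda$ necessarily leaves linear residuals in $\bar f_{\alpha_j},\bar f_{\alpha_k}$ --- the paper's identity \eqref{eq:append1} carries exactly such residuals $\hat\tau_{\alpha_q}f_{\alpha_q}$, $q=i,j,k$. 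These are harmless only when the anchors lie in $\Omega$ (\Cref{lem:sos1}); for anchors in deeper layers $\tilde B_r$ they are not in $I_2$, and the paper eliminates them by recursively invoking the induction hypothesis on each anchor $\alpha_q$ with the constant set to $\hat\tau_{\alpha_q}$. This is why the lemma is proved for an \emph{arbitrary} $c_\beta$, and it is also why $S_\beta$ must contain the sets $S_{\alpha_q}$ of the anchors rather than the anchors alone, whence the containment $S_\beta\subseteq\cup_{r\le\ell}\tilde B_r$ instead of $S_\beta\subseteq\tilde B_\ell$. Your proposal attributes the slack $-2c_\beta\bar f_\beta$ solely to completing the square in $x_\beta$ and never closes this recursion.

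For the generalized-square case your plan --- factor the GS relation into a chain of genuine squares through auxiliary tuples and feed each link through the one-step reduction --- is the right idea and is what the paper does, but you have only flagged it as the expected obstacle rather than executed it. The paper's construction is explicit: after reordering coordinates so that $\beta$ and $\alpha_i$ differ in exactly the first $m\ge 2$ slots, it introduces three auxiliary tuples $\gamma_1,\gamma_2,\gamma_3$ (mixing the coordinates of $\beta,\alpha_i$ with those of $\alpha_j,\alpha_k$) so that $h_{\beta,\alpha_i}-h_{\gamma_1,\gamma_2}$, $h_{\alpha_i,\alpha_j}-h_{\gamma_1,\gamma_3}$ and $h_{\alpha_i,\alpha_k}-h_{\gamma_2,\gamma_3}$ all lie in $J_2$, and then writes a single SOS identity \eqref{eq:append2} in $g_\beta,g_{\alpha_i},g_{\alpha_j},g_{\alpha_k},g_{\gamma_1},g_{\gamma_2},g_{\gamma_3}$, with the anchor residuals again removed by the same recursive mechanism. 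Until the one-step reduction is repaired along these lines and the GS chain is made explicit (including the base-case sub-induction inside $\mathcal{P}_{SR}(\Omega)$ needed for the containment claim), the induction does not close.
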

    \begin{proof}
    Denote $B_0:=\Omega$, and
    $$B_\ell:=\{\beta\in \NN\setminus \Omega:v_{\beta} \underset{(\FF_2)}{=} v_{\alpha_i} + v_{\alpha_j} + v_{\alpha_k}, \exists \alpha_i,\alpha_j,\alpha_k\in \cup_{r=0}^{\ell-1}B_{r}\}$$
    for $\ell\geq 1$.
    We will show by induction on $s$ that \eqref{eq:prop_weighted} holds for any $\beta \in B_s$.
    The base case, $s=0$, is trivial from the definition of $I_2$. Suppose that \eqref{eq:prop_weighted} holds for $s\leq \ell$. Consider the case when $s=\ell+1$.
    For any $\beta\in B_{\ell+1}$, there exists $\alpha_i,\alpha_j,\alpha_k \in  \cup_{r=0}^{\ell}B_{r}$ such that 
    $v_{\beta} \underset{(\FF_2)}{=} v_{\alpha_i} + v_{\alpha_j} + v_{\alpha_k}$. There are two possibilities, one is that $\{\beta,\alpha_i;\alpha_j,\alpha_k\}$ is a square, and the other one is that $\{\beta,\alpha_i,\alpha_j,\alpha_k\}$ is a generalized square.
     
    Firstly, suppose $\{\beta, \alpha_i; \alpha_j, \alpha_k\}$ is a square. In this case, we have $h_{\beta,\alpha_i}- h_{\alpha_j, \alpha_k} \in J_2$, and let $\tau(\alpha,\beta;\delta) = ( T_\alpha  +\delta)/T_\beta$. Observe that
     
     \begin{equation}\label{eq:append1}
     \begin{split}
         g_\beta +& \big(\tau(\beta,\alpha_i;c_\beta)\big)^2
         g_{\alpha_i} + g_{\alpha_j}+ \big(\tau(\beta,\alpha_i;c_\beta)\big)^2 g_{\alpha_k}\\
         =& - 2c_\beta f_\beta+ \big(f_\beta - \tau(\beta,\alpha_i;c_\beta)f_{\alpha_i}\big)^2 + \big(f_{\alpha_j} + \tau(\beta,\alpha_i;c_\beta)f_{\alpha_k}\big)^2\\
          &+\hat\tau_{\alpha_j} f_{\alpha_j} +\hat\tau_{\alpha_k} f_{\alpha_k} + \hat\tau_{\alpha_i}f_{\alpha_i} \bmod J_2.
         \end{split}
     \end{equation}
     where \(\hat\tau_{\alpha_i} = 2c_\beta\tau(\beta,\alpha_i;c_\beta)\), \(\hat \tau_{\alpha_j} = 2 (\hat{T}_{\alpha_j}+\hat{T}_\beta+c_\beta)\) and \(\hat\tau_{\alpha_k} = 2 \tau(\beta,\alpha_i;c_\beta)(\hat{T}_{\alpha_j}+\hat{T}_\beta+c_\beta)\).
     From \eqref{eq:prop_weighted}, for each $\alpha_q \in \cup_{r=0}^\ell B_r$ and constant $\hat\tau_{\alpha_q}$, $q=i,j,k$, there exists $S_{\alpha_q} \subseteq \NN$ and weights $\{w^{(\alpha_q)}_\gamma\}_{\gamma\in S_{\alpha_q}}$ such that
     \begin{equation}\label{eq:append6}
          P_{\alpha_q} := g_{\alpha_q} + \sum_{\gamma\in S_{\alpha_q}} w^{(\alpha_q)}_\gamma g_\gamma = -2\hat\tau_{\alpha_q}f_{\alpha_q} + \tilde p_{\alpha_q} \bmod (I_2 + J_2),
     \end{equation}
     where $\tilde p_{\alpha_q} \in \Sigma_{N,2}$.
     Then we have
     \begin{equation}\label{eq:append3}
          2\hat\tau_{\alpha_q}f_{\alpha_q} + P_{\alpha_q} = \tilde{p}_{\alpha_q}\in\Sigma_{N,2}.
     \end{equation}
     which implies that the last three terms on the right-hand side (RHS) of \eqref{eq:append1} can be combined with $P_{\alpha_q},q=i,j,k$ to form SOS polynomials. Hence,
     the RHS of \eqref{eq:append1} can be simplified as
     \begin{equation}\label{eq:append7}
         - 2c_\beta f_\beta + \tilde p_\beta \bmod (I_2 + J_2),~~\text{where $\tilde p_\beta\in \Sigma_{N,2}$}.
     \end{equation}
     which implies that \eqref{eq:prop_weighted} holds.
   
     Next, we consider the case that $\{\beta,\alpha_i,\alpha_j,\alpha_k\}$ is a GS with $\alpha_i,\alpha_j,\alpha_k \in \Omega$.
     Denote $d_H(v_1, v_2)$ as the Hamming distance.
     Without loss of generality, suppose that 
     \[
     d_H(v_\beta,v_{\alpha_i}) \geq \max\{d_H(v_\beta,v_{\alpha_j}), d_H(v_\beta,v_{\alpha_k})\}
     \]
     i.e, $\alpha_i$ is the tuple which shares the least number of common indices with $\beta$.
     Then $\alpha_i$ and $\beta$ at least have two distinct elements, otherwise there will be a contradiction with that $\beta, \alpha_i, \alpha_j$ and $\alpha_k$ are distinct elements satisfying the GS-propagation rule.
      Assume $\beta$ and $\alpha_i$ have $m$ distinct indices on the first $m$ slots where $2\leq m \leq d$ and the remaining slots are the same.
      Then $\alpha_j$ and $\alpha_k$ also have distinct elements on the first $m$ slots and the remaining slots are the same.
      Hence,
      \begin{gather*}
      \beta = (i_1, i_2, \dots, i_m, i_{m+1}\dots, i_d),\quad
      \alpha_i = (j_1, j_2, \dots, j_m, i_{m+1} \dots, i_d),\\
      \alpha_j = (\bar i_1, \bar i_2, \dots, \bar i_m, j_{m+1}, \dots, j_d), \quad
      \alpha_k = (\bar j_1, \bar j_2, \dots, \bar j_{m}, j_{m+1}, \dots, j_d)
      \end{gather*}
      where $(\bar i_q, \bar j_q)$ equals either $(i_q,j_q)$ or $(j_q,i_q)$ for $1\leq q\leq m$.
      Consider the tuples
     \begin{align*}
         \gamma_1 &= (\bar i_1, \bar i_2, \dots, \bar i_m,i_{m+1}\dots, i_d),\\ 
         \gamma_2 &= (\bar j_1, \bar j_2, \dots, \bar j_m,i_{m+1}\dots, i_d),\\  
         \gamma_3 &= (j_1, j_2, \dots, j_m,j_{m+1}\dots, i_d).
     \end{align*}
     It follows that \[
     \{h_{\beta,\alpha_i} - h_{\gamma_1, \gamma_2},~ h_{\alpha_i,\alpha_j} - h_{\gamma_1,\gamma_3}, ~
     h_{\alpha_i,\alpha_k} - h_{\gamma_2, \gamma_3}\}\subset J_2.
     \]
     Recall $t_{\alpha,\beta}:=T_\alpha/T_\beta$, and notice that
     \begin{equation}\label{eq:append2}
     \begin{split}
         g_\beta +&
         \big(2(\tau(\beta,\alpha_i;c_\beta))^2+1\big)g_{\alpha_i}+
         \tau^2_{\alpha_j}g_{\alpha_j} + \tau^2_{\alpha_k}g_{\alpha_k}+ g_{\gamma_1} + \tau_{\gamma_2}^2 g_{\gamma_2}^2+\tau_{\gamma_3}^2g_{\gamma_3}^2 \\
         =& - 2c_\beta f_\beta+\big(f_\beta - \tau(\beta,\alpha_i;c_\beta) f_{\alpha_i}\big)^2 + \big(f_{\gamma_1} + \tau(\beta,\alpha_i;c_\beta) f_{\gamma_2} - \tau_{\gamma_3}f_{\gamma_3}\big)^2\\ & + \big(f_{\alpha_i}+ \tau_{\gamma_3} f_{\alpha_j}\big)^2+ \big(\tau(\beta,\alpha_i;c_\beta) f_{\alpha_i} + \tau_{\gamma_3}f_{\alpha_k} \big)^2 \\
         & + 2\tilde\tau_{\alpha_i}f_{\alpha_i}  + 2\tilde\tau_{\alpha_j}f_{\alpha_j}+2\tilde\tau_{\alpha_k}f_{\alpha_k}\bmod J_2
         \end{split}
     \end{equation}
     where $\tau_{\gamma_2} = \tau(\beta,\alpha_i;c_\beta) $, $\tau_{\gamma_3} = t_{\gamma_1,\gamma_3} +\tau(\beta,\alpha_i;c_\beta) t_{\gamma_2,\gamma_3}$, $\tau_{\alpha_j} = \tau_{\alpha_k} =\tau_{\gamma_3}^2$, $\tilde\tau_{\alpha_j} = (T_{\alpha_i} + \tau_{\gamma_3} T_{\alpha_j})\tau_{\gamma_3}$, $\tilde\tau_{\alpha_k} = (\tau_{\alpha_i}T_{\alpha_i} + \tau_{\gamma_k}T_{\alpha_k})\tau_{\gamma_3}$, and $\tilde\tau_{\alpha_i} = \tilde\tau_{\alpha_j}+\tilde\tau_{\alpha_k}$. 

     From \eqref{eq:prop_weighted}, for each $\alpha_q \in \cup_{r=0}^\ell B_r$ and constant $\tilde \tau_{\alpha_q}$, $q=i,j,k$, there exists $\tilde S_{\alpha_q} \subseteq \NN$ and weights $\{\tilde w^{(\alpha_q)}_\gamma\}_{\gamma\in \tilde S_{\alpha_q}}$ to construct polynomial $\tilde P_{\alpha_q}$ similarly as in \eqref{eq:append6}. Then the last three terms on the right-hand side (RHS) of \eqref{eq:append2} can be combined with $\tilde P_{\alpha_q},q=i,j,k$ to form SOS polynomials. Hence, the RHS of \eqref{eq:append2} can be simplified as
     \begin{equation}
         - 2c_\beta f_\beta + \tilde p_\beta' \bmod (I_2 + J_2),~~\text{where $\tilde p_\beta'\in \Sigma_{N,2}$}.
     \end{equation}
     which implies that \eqref{eq:prop_weighted} holds. Thus, \eqref{eq:prop_weighted} holds for any $\beta\in B_{\ell+1}$. Since GS-propagation condition holds, it is true that \eqref{eq:prop_weighted} holds for any $\beta \in \NN\setminus \Omega$.
     
     In addition, if $\Omega$ satisfies S-propagation, suppose that $S_\beta\subseteq \cup_{r=0}^{\ell-1} \tilde B_r$ for all $\beta\in\tilde B_{\ell}$. From the choice of $\alpha_q$ and $S_{\alpha_q}, q=i,j,k$ as shown in \eqref{eq:append1} to \eqref{eq:append7} and the S-propagation rule, it follows that $S_\beta\subseteq \cup_{r=0}^{\ell} \tilde B_r$ for all $\beta\in\tilde B_{\ell+1}$.
     
     It remains to prove the base case, that is, for $\beta \in \tilde B_1$, then $S_\beta \in \mathcal{P}_{SR}(\Omega)$. Because $\mathcal{P}_{SR}(\Omega)$ itself is a propagation set, we will prove the base case by induction. Let $\mathcal{D}_0 = \Omega$, and $\mathcal{D}_\ell = \{\beta \in \NN\setminus \Omega: \exists~\alpha_i,\alpha_j\in \Omega, \alpha_k\in \cup_{r=0}^{\ell-1}D_r\text{~s.t.~} \{\beta,\alpha_i;\alpha_j,\alpha_k\} \text{~is a square}\}$. 
      For $\beta \in \mathcal{D}_1$, there exists $\alpha_i,\alpha_j,\alpha_k\in \Omega$ such that $\{\beta,\alpha_i;\alpha_j,\alpha_k\}$ is a square, and this case is trivial from \eqref{eq:append1}, because for the last three terms on the RHS of \eqref{eq:append1}, we have $f_{\alpha_q} = 0 \bmod I_2,q=i,j,k$. Suppose that for $\beta\in\mathcal{D}_\ell$, we have $S_\beta \in \cup_{r=0}^{\ell-1}\mathcal{D}_r$. 

      Since SR-propagation is a special case of S-propagation, following the same inductive step from $\tilde B_\ell$ to $\tilde B_{\ell+1}$, we can show that for any $\beta \in \mathcal{D}_{\ell+1}$, $S_\beta \in \cup_{r=0}^{\ell}\mathcal{D}_r$. 
      There exists a positive integer $d_s$ such that $\mathcal{P}_{SR}(\Omega) = \cup_{r=0}^{d_s}\mathcal{D}_r$. Thus, we have shown the base case, i.e., for $\beta \in \tilde B_1$, then $S_\beta \in \mathcal{P}_{SR}(\Omega)$.
\end{proof}

Then we are ready to prove \Cref{thm:tight_weighted_sdp}.

\begin{proof}[Proof of \Cref{thm:tight_weighted_sdp}]
     From \Cref{lem:tight_weighted_sdp}, if the GS-propagation condition holds, for any $\beta\in \NN\setminus \Omega$, let $c_\beta=T_\beta$, there is a set of multi-indices $S_\beta$ and set of nonnegative weights $\{w^{(\beta)}_\gamma\}_{\gamma\in S_\beta}$ such that
     \[
    (x_\beta^2 - T_{\beta}^2) + \sum_{\gamma\in S_\beta} w^{(\beta)}_\gamma (x_{\gamma}^2 - T_\gamma^2) = -2T_\beta(x_\beta - T_{\beta}) + \tilde p_\beta \bmod (I_2 + J_2).
    \]
    Observe that $(x_\beta - T_{\beta})^2 = (x_\beta^2 - T_{\beta}^2) - 2T_\beta(x_\beta - T_{\beta})$. Then it can be shown that
    \begin{equation}\label{eq:gs_single_sos}
        2(x_\beta^2 - T_{\beta}^2) + \sum_{\gamma\in S_\beta} w^{(\beta)}_\gamma (x_{\gamma}^2 - T_\gamma^2) = (x_\beta - T_{\beta})^2 + \tilde p_\beta \bmod (I_2 + J_2). 
    \end{equation}
    Applying \eqref{eq:gs_single_sos} for all $\beta\in\NN\setminus\Omega$, it follows that there exists a nonnegative weight vector $w$ such that 
    \begin{equation}\label{eq:gs_all_sos}
        \sum_{\beta\in \NN\setminus\Omega} w_\beta g_\beta = \sum_{\beta\in \NN} w_\beta g_\beta = \sum_{\beta\in \NN} f^2_\beta + \hat p = \tilde \sigma \bmod (I_2 + J_2)
    \end{equation}
    where $\hat p,\tilde \sigma\in \Sigma_{N,2}$. Similarly to \Cref{lem:sdp2}, we have that the associated Gram matrix of $\tilde \sigma$ is of rank $N$, and $[1;\bar x]$ is in the kernel of this Gram matrix.
    Following the same analysis as in \Cref{thm:tightsdp2},
    it can be shown that
    \eqref{eq:weighted_sdp} is tight for the given nonnegative weight vector~$w$.
\end{proof}

\subsection{Proof of \Cref{thm:tight_sdp_square}}\label{append:thm:tight_sdp_square}

\begin{proof}
     For any $\beta\in \NN\setminus \Omega$, let $c_\beta=T_\beta$. Following the same strategy to construct \eqref{eq:gs_single_sos} as in \Cref{thm:tight_weighted_sdp}, for any $\beta \in\tilde B_{\ell+1}$, by \Cref{lem:tight_weighted_sdp}, there exists a set $S_{\beta}\subseteq \cup_{r=0}^\ell \tilde B_r$ and weights $\{ w^{(\beta)}_\gamma\}_{\gamma\in S_{\beta}}$ such that 
     \begin{equation}\label{eq:append_sos}
     \vartheta^{\ell+1}\big(2g_{\beta}^2 + \sum_{\gamma\in S_{\beta}} w^{(\beta)}_\gamma g_{\gamma}^2 \big) =  \vartheta^{\ell+1}(f_{\beta}^2 + \tilde p_{\beta}) \bmod (I_2 + J_2). 
     \end{equation}
     
     From the way of assigning weights in \Cref{alg:weights}, when $\vartheta
     $ is sufficiently small, that is, $\vartheta\in (0, t)$ for some $t\in(0,1)$, the weights $\vartheta^{\ell+1}$ for the elements in $\tilde B_{\ell+1}$ are much smaller than the weights $\vartheta^{s}$ in $\tilde B_{s}, s\leq \ell$. Therefore, for $\beta\in\tilde B_{\ell+1}$, the weights $\{\vartheta^{\ell+1}w^{(\beta)}_\gamma\}_{\gamma\in\tilde S_{(\beta)}}$ required to form SOS polynomials in \eqref{eq:append_sos} are significantly smaller that the assigned weights (which are at least $\vartheta^{\ell}$) of $g_\gamma \text{~for~}\gamma\in S_\beta \subseteq\big(\cup_{r=0}^\ell \tilde B_r\big)\cup\mathcal{P}_{SR} $. 
     Since S-propagation holds, there exists a postive integer $m$ such that
    \[  
    \cup_{r=1}^m \tilde B_r = \NN\setminus \mathcal{P}_{SR}(\Omega).
    \]
    which implies that \eqref{eq:append_sos} holds for any $\beta\in\NN\setminus \mathcal{P}_{SR}(\Omega)$.

    It remains to address the case when $\beta\in\mathcal{P}_{SR}(\Omega)$.
    Recall the initial weight assigned to each $\beta\in\mathcal{P}_{SR}(\Omega)$ is set to 1. Under the assumption that $\vartheta$ is sufficiently small, only a negligible portion of the weight for $\beta\in\mathcal{P}_{SR}(\Omega)$ is redistributed to combine with elements in $\NN\setminus\mathcal{P}_{SR}(\Omega)$ for the construction of SOS polynomials. Hence, for each $\beta\in\mathcal{P}_{SR}(\Omega)$, its remaining weight $\iota_\beta$ is close to 1. From \Cref{lem:sdp2}, it follows that
    \[
    \sum_{\beta\in\mathcal{P}_{SR}(\Omega)} \iota_\beta g_\beta = \sigma_w \bmod (I_2 + J_2)
    \]
    where $\sigma_w\in\Sigma_{N,2}$ and the rank of the Gram matrix associated with $\sigma_w$ equals the cardinality $|\mathcal{P}_{SR}(\Omega)|$.
    Following the same analysis as in \Cref{thm:tight_weighted_sdp}, the proof is complete.
\end{proof}
\subsection{Proof of \Cref{lem:poly_error}}\label{append:lem:poly_error}
\begin{proof}
    Since $p \in I_2$, then $p = \sum_{\alpha\in\Omega} (\nu_\alpha f_\alpha + \lambda_\alpha g_\alpha)$. Let
    \begin{equation}\label{eq:8}
         p^\epsilon :=  \sum_{\alpha\in\Omega}(\nu_\alpha f^\epsilon_\alpha + \lambda_\alpha g^\epsilon_\alpha) = p - \kappa^\epsilon_{\lambda,\nu},
    \end{equation}
    where $\kappa^\epsilon_{\lambda,\nu}=\sum_{\alpha\in \Omega}(\lambda_\alpha \epsilon^2_\alpha +2\lambda_\alpha\hat{T}_\alpha\epsilon_\alpha + \nu_\alpha\epsilon_\alpha)$ = $O(\|\epsilon\|_\infty)$. Observe that
    $(f^\epsilon_\alpha)^2 = g^\epsilon_\alpha - 2f^\epsilon_\alpha$. It follows that
    \begin{equation}\label{eq:9}
    \begin{split}
          p^\epsilon + C\sum_{\alpha\in\Omega}(f^\epsilon_{\alpha})^2 =& \sum_{\alpha\in\Omega} \Big((C+\lambda_\alpha)g^\epsilon_\alpha - (2C-\nu_\alpha)f^\epsilon_\alpha\Big)\\
          =&\sum_{\alpha\in\Omega}\Big((C+\lambda_\alpha)\big(x_\alpha - s_{\alpha}^\epsilon(C)\big)^2 + \bar{s}^\epsilon_\alpha(C)\Big),
    \end{split}
    \end{equation}
    where $s^\epsilon_\alpha = \frac{2CT^\epsilon_\alpha - \nu_\alpha}{2(C+\lambda_\alpha)}$, and $\bar s^\epsilon_\alpha = -\frac{4\lambda_\alpha^2(T^\epsilon_\alpha)^2 + 4\lambda_\alpha\bar \lambda_\alpha T^\epsilon_\alpha + \nu_\alpha^2}{4(C+\lambda_\alpha)} = O(1/C)$.
    Then
    \begin{equation}\label{eq:10}
    \begin{split}
         p + C\sum_{\alpha\in\Omega}(f^\epsilon_\alpha)^2 
         =& (p^\epsilon + \kappa^\epsilon_{\lambda,\nu})+ C\sum_{\alpha\in\Omega}(f^\epsilon_\alpha)^2 \\
         =&  \tilde p^\epsilon + \sum_{\alpha\in\Omega}\bar s^\epsilon_\alpha + O(\|\epsilon\|_\infty)\\
         =& \tilde p^\epsilon + O(1/C) + O(\|\epsilon\|_\infty)
    \end{split}
    \end{equation}
    where $\tilde p^\epsilon := \sum_{\alpha\in\Omega}(C+\lambda_\alpha)(x_\alpha - s_{\alpha}^\epsilon(C))^2\in\Sigma_{N,2} $.
    The first equality in \eqref{eq:10} follows from \eqref{eq:8}, and the second equality is from \eqref{eq:9}.
     Plugging in $x_\alpha = \hat{T}_\alpha, \forall \alpha\in\Omega$ the left-hand side (LHS) of \eqref{eq:10} is 
    \begin{equation}\label{eq:11}
    \begin{split}
    p = 0 \quad\text{and}\quad  C\sum_{\alpha\in\Omega}(f^\epsilon_\alpha)^2= C\|\epsilon\|_\infty^2
    \end{split}
    \end{equation}
    From \eqref{eq:10} and \eqref{eq:11},
    plugging in
    $x_\alpha = \hat T_\alpha, \forall\alpha\in\Omega$
    at the polynomial $\tilde p^\epsilon$
    is a scalar of magnitude $O(\|\epsilon\|_\infty + \|\epsilon\|_\infty^2C + 1/C)$. 
\end{proof}
\subsection{Proof of \Cref{lem:dual_bound}}\label{append:lem:dual_bound}
\begin{proof}
From \Cref{thm:tightsdp2}, there exists $\bar p=\sum_{\alpha\in\Omega}(\bar \lambda_\alpha g_\alpha + \bar\nu_\alpha f_\alpha)\in I_2$ and $\sigma \in \Sigma_{N,2}$ such that
\begin{equation}\label{eq:sos_noise}
    \sum_{\beta\in \NN} g_\beta - \bar p = \sum_{\beta\in \NN} g_\beta - \sum_{\alpha\in \Omega} \Big(\bar\lambda_\alpha g_\alpha + \bar\nu_\alpha f_\alpha\Big)
    = \sigma\bmod J_2,
\end{equation}
and the Gram matrix $\bar Q$ of $\sigma\in\Sigma_{N,2}$ has corank 1. We also have
\[
\bar Q \bullet \boldsymbol{\bar X} = 0.
\]
From \eqref{eq:9} and \eqref{eq:sos_noise}, it follows that
\begin{equation}\label{eq:noisy_sos}
    \begin{split}
        &\Big(\big(\sum_{\beta\in\NN} g_\beta\big) - \bar p\Big) + \Big(\bar p -\kappa^\epsilon_{\bar \lambda,\bar \nu} + C\sum_{\alpha\in\Omega}(f^\epsilon_{\alpha})^2- \sum_{\alpha\in\Omega}\bar{s}^\epsilon_\alpha(C)\Big)\\
        =& \sigma + \tilde p^\epsilon \in \Sigma_{N,2}.
    \end{split}
\end{equation}
Also, when $x_\beta = T_\beta,\forall \beta\in\NN$, the value of the above polynomial is
$O(\|\epsilon\|_\infty + \|\epsilon\|_\infty^2C + 1/C)$. Let $\bar Q^\epsilon_c$ be the associated Gram matrix of the above SOS polynomial. In \eqref{eq:noisy_sos}, we have that $\tilde p^\epsilon \in \Sigma_{N,2}$. It follows that $\bar Q^\epsilon_c\succeq \bar Q$. And for $\boldsymbol{\bar X}$ and $\bar Q^\epsilon_c$, the duality gap satisfies the following inequality,
\begin{equation}\label{eq:01}
\bar Q_c^\epsilon \bullet \boldsymbol{\bar X} \leq O(\|\epsilon\|_\infty + \|\epsilon\|_\infty^2C + 1/C),
\end{equation}
which further implies that
\begin{equation}\label{eq:dual_gap}
    \bar Q_c^\epsilon\bullet \boldsymbol{X_c^\epsilon} \leq \bar Q_c^\epsilon \bullet \boldsymbol{\bar X}\leq O(\|\epsilon\|_\infty + \|\epsilon\|_\infty^2C + 1/C).\end{equation}
where the first inequality follows from the primal optimality of $\boldsymbol{X_c^\epsilon}$. 
This proves~\eqref{eq:dual_bound}.
\end{proof}

\section{Custom SDP solver}\label{experiments:custom_solver}

In order to solve completion problems for larger tensors,
we develop a custom SDP solver using ideas similar to those in \cite{cosse2021stable}.
We focus here on problem \eqref{eq:sdp-p}.
The main difficulty is that the SDP has a very large number of scalar variables and constraints.
We make use of three techniques to overcome this:
hierarchical low-rank factorization,
averaging the symmetry constraints,
and subsampling the constraints.
We proceed to explain these techniques, and then we provide our overall method.

\subsubsection*{Hierarchical Low-Rank Factorization}
We apply the low-rank Burer-Monteiro method \cite{burer2005local}.
Concretely, we assume that $\boldsymbol{X}$ has rank $\leq k$,
so we may factorize it as
\(\boldsymbol{X} = YY^T,\)
where $Y = [t; \Tilde{Y}]\in \RR^{(N+1)\times k}$.
Each column of $\tilde Y$ has length~$N$,
so we may view it as the vectorization of a tensor 
$\tilde T_{(i)} \in \RR^{\NN}$,
i.e.,
\( \Tilde{Y}_{(i)} = \vct(\tilde T_{(i)}). \)
We now assume that these tensors have rank $\leq q$, 
so we may write them as $\tilde T_{(i)} = \sum_{j=1}^q \tilde u^{(1)}_{(i,j)} \otimes \dots \otimes \tilde u^{(d)}_{(i,j)}$.
In this way, the original convex problem in terms of $\boldsymbol{X}$
becomes a nonconvex problem in variables $t \in \RR^k$ and $\tilde u_{(i,j)}^{(\ell)}\in \RR^{n_\ell}$ for $i \in [k], j\in [q], \ell \in [d]$.
This reduces the number of variables from $O(N^2)$ to $O(n k q)$.

\subsubsection*{Averaging symmetry constraints}

There is a very large number of constraints of the form
$X_{\alpha_1,\alpha_2} = X_{\alpha_3,\alpha_4}$;
we call them symmetric constraints.
We average some of these constraints together in order to reduce the number of constraints.
Given a pair of tuples $(\alpha, \beta) \in \NN^2$ and some $i \in [d]$,
let $\pi^i(\alpha, \beta) \in \NN^2$ be the pair of tuples obtained by switching the $i$-th index of $\alpha,\beta$.
Instead of the $|\mathcal{A}|$ symmetric constraints,
we use the averaged equations
\begin{equation}\label{eq:relaxed_sym_cons}
    X_{\alpha,\beta} = \frac{1}{d} \sum_{i=1}^d X_{\pi^i(\alpha,\beta)},
    \quad\forall (\alpha, \beta) \in \NN^2, i \in [d]
\end{equation}
We point out that in \cite{cosse2021stable} they also average the constraints, though differently.
They focus exclusively on the case $d=2$,
and they only use either $O\big(n^2\big)$ or $O(n)$ averaged symmetric constraints.
In contrast, we use many more constraints, $O(N^2)$, as this leads to better practical results.

\subsubsection*{Subsampling the constraints}
The number of constraints is still quite large, even after averaging the symmetry constraints.
To further reduce the computational cost, we randomly select batches of pairs $(\alpha,\beta) \in \NN^2$,
and we only use the constraints in \eqref{eq:relaxed_sym_cons} corresponding to such pairs.
In addition, we also subsample the observations and consider only a small batch of terms
$X_{\alpha,\alpha} -2T^\epsilon_\alpha x_\alpha + ( T^\epsilon_\alpha)^2$ in the objective.
By doing this, the number of constraints and the number of terms in the objective is decreased to the batch size.
Our overall method requires solving multiple subproblems
and we resample the batches in each such subproblem.
By resampling we ensure that all the observations are used in the overall method,
even though each subproblem only uses a subset of them.

\subsubsection*{Overall method}

The starting point for our method is the augmented Lagrangian method (ALM).
The Lagrangian function for \eqref{eq:sdp-p} is
\begin{align*}
L_\rho(X,\lambda) = \tr(X)
  & + \frac{\rho}2\sum_{\alpha \in \Omega}
   (X_{\alpha,\alpha } -2\hat{T}_\alpha x_\alpha + \hat T_\alpha^2)\\
  &+ \!\!\!\!\! \sum_{\{\alpha_i,\alpha_j;\alpha_k,\alpha_\ell\}\in\mathcal{A}} \!\!\!\!\!
  \bigl(
  \mu_{ijk\ell} (X_{\alpha_i,\alpha_j} \!-\!  X_{\alpha_k,\alpha_\ell})
  +  \frac{\rho}{2}(X_{\alpha_i,\alpha_j} \!-\!  X_{\alpha_k,\alpha_\ell})^2
  \bigr)
\end{align*}
where $\mu$ is the Lagrange multipliers vector, and $\rho$ is the penalty parameter.
We slightly modify the objective by replacing the term
\(
   X_{\alpha,\alpha } -2\hat{T}_\alpha x_\alpha + \hat T_\alpha^2
\)
with
\(
   (X_{\alpha,\alpha } {-} \hat{T}_\alpha)^2 + (x_\alpha {-} \hat T_\alpha)^2,
\)
as this leads to a better performance in practice.
Let $B^{(1)} \subset \Omega$ and $B^{(2)} \subset \Omega\times \NN$ be the random batches for the subsampling.
Using constraint averaging and subsampling, we obtain the modified Lagrangian
\begin{align*}
\tilde L_\rho^{B^{(1)},B^{(2)}}(X,\lambda) &= \tr(X)
   + \sum_{\alpha \in B^{(1)}}\frac{\rho}2\sum_{\alpha \in \Omega}
   \Big((X_{\alpha,\alpha } {-} \hat{T}_\alpha)^2 + (x_\alpha {-} \hat T_\alpha)^2\Big)
 \\
  &+ \sum_{(\alpha,\beta) \in B^{(2)}}\Big(\lambda_{\alpha,\beta}
  (avg(X_{\alpha,\beta})  \!-\!  X_{\alpha,\beta}) +\frac{\rho}2(avg(X_{\alpha,\beta})  \!-\! X_{\alpha,\beta})^2\Big),
\end{align*}
where $avg(X_{\alpha,\beta}) = \frac{1}{d} \sum_{i=1}^d X_{\pi^i(\alpha,\beta)}.$ 
We also apply low-rank hierarchical decomposition,
to obtain a nonconvex problem in $O(n k q)$ variables.

The overall method follows the usual ALM framework,
iteratively solving the modified Lagrangian for some fixed multipliers,
and then updating the multipliers.
However, in each subproblem we may use different batches $B^{(1)},B^{(2)}$, obtaining a new modified Lagrangian.

We implemented our method in Julia, using NLopt for solving the nonconvex subproblems.
In our experiments,
the initial point was obtained by drawing independent entries from $\mathcal{N}(0,1)$.
We set the penalty parameter $\rho=10$,
the parameters $k = q = 2$ for the low-rank decompositions,
and we sample uniformly random batches with $|B^{(1)}| = 150$ and $|B^{(2)}| = 300$.
For each random batch, we applied 25 rounds of~ALM.
We used 20 random batches, for a total of 5000 rounds of~ALM.

\end{appendices}

\end{document}